\let\oldtocsection=\tocsection
\let\oldtocsubsection=\tocsubsection
\let\oldtocsubsubsection=\tocsubsubsection
\renewcommand{\tocsection}[2]{\hspace{0em}\oldtocsection{#1}{#2}}
\renewcommand{\tocsubsection}[2]{\hspace{1em}\oldtocsubsection{#1}{#2}}
\renewcommand{\tocsubsubsection}[2]{\hspace{2em}\oldtocsubsubsection{#1}{#2}}
\def\equationcolor {\color{black}}
\def\textcolor     {\color{black}}
\def\bcoleq    {\begin{equation}\equationcolor}
\def\ecoleq    {\textcolor\end{equation}}
\def\bcoleqn   {\equationcolor\begin{eqnarray}}
\def\ecoleqn   {\end{eqnarray}\textcolor}
\def\S        {\mathbb{S}}
\def\Re{\operatorname{Re}}
\def\Im{\operatorname{Im}}
\def\span{\operatorname{span}}
\def\R{\mathbb{R}}
\def\N{\mathbb{N}}
\newcommand{\disp}{\displaystyle}
\let\epsilon\varepsilon
\let\rho\varrho
\let\tends\longrightarrow
\DeclareMathOperator*{\rank}{rank}
\newtheorem{theorem}{Theorem}[section]
\newtheorem{mythm}{Theorem}
\newtheorem{lemma}[theorem]{Lemma}
\theoremstyle{definition}
\newtheorem*{assumption*}{$\lambda_{1}$-Condition}
\newtheorem*{conj}{Conjecture}
\newtheorem{remark}[theorem]{Remark}
\def\pproof#1{\@ifnextchar[\opargproof
{\opargproof[\it Proof of #1.]}}
\def\opargproof[#1]{\par\noindent {\bf #1 }}
\numberwithin{equation}{section}
\begin{document}

\title[Harmonic maps between spheres]{Harmonic maps from $\S^3$ to $\S^2$
and the rigidity of the Hopf fibration}
\author[A. Georgakopoulos]{\textsc{A. Georgakopoulos}}
\author[M. Magliaro]{\textsc{M. Magliaro}}
\author[L. Mari ]{\textsc{L. Mari}}
\author[A. Savas-Halilaj]{\textsc{A. Savas-Halilaj}}

\address{Athanasios Georgakopoulos\newline
{Department of Mathematics,
Section of Algebra \!\&\! Geometry, \!\!
University of Ioannina,
45110 Ioannina, Greece} \newline
{\sl E-mail:} {\bf thanosgeo18@gmail.com}
}

\address{Marco Magliaro \newline
Dipartimento di Scienza e Alta Tecnologia,
Universit\`a degli Studi dell' Insubria,
22100 Como, Italy\newline
{\sl e-mail:} {\bf marco.magliaro@uninsubria.it}
}

\address{Luciano Mari \newline
Dipartimento di Matematica ``Federigo Enriques",
Universit\`a degli Studi di Milano,
20133 Milano, Italy\newline
{\sl e-mail:} {\bf luciano.mari@unimi.it}
}

\address{Andreas Savas-Halilaj\newline
{Department of Mathematics,
Section of Algebra \!\&\! Geometry, \!\!
University of Ioannina,
45110 Ioannina, Greece} \newline
{\sl E-mail:} {\bf ansavas@uoi.gr}
}

\renewcommand{\subjclassname}{  \textup{2000} Mathematics Subject Classification}
\subjclass[2000]{Primary 53C43, 58E20, 53C24, 53C40, 53C42, 57K35}
\keywords{Harmonic maps, weakly conformal maps, Hopf fibration}
\thanks{A. Georgakopoulos \& A. Savas-Halilaj are supported in the framework of H.F.R.I. call ``Basic Research Financing” under the
National Recovery and Resilience Plan “Greece 2.0" funded by the European Union-NextGenerationEU
(HFRI Project Number: 14758). L. Mari is supported by the PRIN 20225J97H5 ``Differential-geometric aspects of manifolds via Global Analysis"}
\parindent = 0 mm
\hfuzz     = 6 pt
\parskip   = 3 mm
\date{}

\begin{abstract}
It was conjectured by Eells that the only harmonic maps $f : \S^3 \to \S^2$ are Hopf fibrations composed with conformal maps of $\S^2$. We support this conjecture by proving its validity under suitable conditions on the Hessian and the singular values of $f$. Among the  results, we obtain a pinching theorem in the spirit of that of Simons, Lawson and Chern, do Carmo and Kobayashi for minimal hypersurfaces in the sphere.
\end{abstract}

\maketitle
\setcounter{tocdepth}{1}
\section{Introduction}
A smooth map $f : M \to N$ between compact Riemannian manifolds is called \emph{harmonic} if it is a critical point of the energy functional
\[
\mathcal{E}(f) \doteq \frac{1}{2} \int_M |df|^2 \, d\mu,
\]
where $|df|$ denotes the norm of the differential $df$ with respect to the Riemannian metrics
on $M$ and $N$, and $d\mu$ is the volume form of $M$.
A classical question in the theory of harmonic maps is whether every continuous map admits a harmonic
representative in its homotopy class. Eells and Sampson \cite{eells} introduced the heat flow to deform a given map into a harmonic one, which leads to a harmonic representative if the target manifold is non-positively curved, but when $N$ has positive curvature the existence problem is more subtle. In particular, the classification of harmonic maps between spheres is far from being completely understood.

In 1931 Hopf \cite{hopf} constructed a remarkable example of a homotopically nontrivial harmonic map
$\pi : \S^3 \to \S^2$ now known as the Hopf fibration, see Section~\ref{sec_hopf}.
According to results of Eells, Ferreira and Ratto \cite{ratto2,eells1,eells2}, for every homotopy class in $[\S^3,\S^2]$
there exist Riemannian metrics on $\S^3$ and $\S^2$ for which this class
admits a harmonic representative.
In contrast with the previous
results,
for the standard round metrics on $\S^3$ and $\S^2$ the only known examples of harmonic maps
$f : \S^3 \to \S^2$ are compositions of the Hopf fibration with conformal maps of $\S^2$. Indeed, Eells formulated the following conjecture \cite[Note~10.4.1, p.~422]{baird1} and \cite[p.~730]{wangg}:

\begin{conj}[Eells]
{\em Every harmonic map $f : \S^3 \to \S^2$ (with their round metrics) factors through the Hopf fibration as
$
f = g\circ \pi,
$
where $g : \S^2 \to \S^2$ is a conformal map.}
\end{conj}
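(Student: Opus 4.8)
The plan is to reduce the conjecture to the classification of harmonic morphisms and then to attack that reduction by a Simons-type pinching argument. Fix a harmonic map $f:\S^3\to\S^2$ and work first on the open set where $df$ has maximal rank two. There the differential admits a singular value decomposition: choosing orthonormal frames $\{e_1,e_2,e_3\}$ on $\S^3$ and $\{\varepsilon_1,\varepsilon_2\}$ on $\S^2$ adapted to $f$, one has $df(e_i)=\lambda_i\,\varepsilon_i$ for $i=1,2$ and $df(e_3)=0$, with singular values $\lambda_1\ge\lambda_2\ge0$ and a distinguished kernel direction $e_3$. The key structural remark is that $f=g\circ\pi$ for a conformal $g$ precisely when $f$ is, after the post-composition, a \emph{harmonic morphism}, i.e.\ horizontally weakly conformal, which in the adapted frame reads $\lambda_1=\lambda_2$. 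Since harmonic morphisms from a three-dimensional space form to a surface are completely classified as the Hopf-type maps (see \cite{baird1}), the whole conjecture is equivalent to the rigidity statement that every harmonic $f:\S^3\to\S^2$ has vanishing conformality defect $\delta\doteq\lambda_1^2-\lambda_2^2$.

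First I would write the harmonic map equation $\tau(f)=\trace\nabla df=0$ in the adapted frame, producing a coupled first-order system for the $\lambda_i$, for the connection forms measuring the rotation of the frame, and for the twisting of the kernel line field $\R e_3$. Differentiating this system and commuting derivatives brings in the ambient curvatures, which here enter decisively through the constancy of the curvature, namely $\Ric_{\S^3}=2\,\gind$ and the Gauss curvature $1$ of $\S^2$. The aim of this step is a Bochner--Weitzenb\"ock identity of Simons type for a tensor $\beta$ encoding the defect (built from the off-diagonal, trace-free part of $\nabla df$, with $|\beta|$ comparable to $\delta$), of the schematic form
\[
\tfrac12\,\lap\,|\beta|^2 \;=\; |\nabla\beta|^2 \;+\; \langle\text{curvature}\rangle\,|\beta|^2 \;-\; Q(\beta),
\]
where $Q$ is a quartic expression in the singular values, exactly as $|A|^4$ appears in the Simons identity for minimal hypersurfaces.

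Then I would integrate this identity over the closed manifold $\S^3$. Since $\S^3$ is compact and without boundary the divergence terms drop, leaving $\int_{\S^3}|\nabla\beta|^2 + \int_{\S^3}\big(\langle\text{curvature}\rangle\,|\beta|^2 - Q(\beta)\big)=0$. Under a pinching hypothesis on the Hessian or on the magnitude of the singular values, in direct analogy with the Simons--Lawson--Chern--do~Carmo--Kobayashi bound, the zeroth-order reaction term is forced to be nonnegative, so both integrals vanish; this gives $\nabla\beta\equiv0$ and then, from the equality case of the reaction term, $\beta\equiv0$, that is $\lambda_1\equiv\lambda_2$. The map is thereby horizontally weakly conformal, and the classification concludes the argument.

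The hard part is exactly what the pinching hypothesis is there to finesse, and it is where the unconditional conjecture resists this scheme. Without an a priori bound on the singular values the curvature reaction $\langle\text{curvature}\rangle\,|\beta|^2 - Q(\beta)$ is genuinely indefinite, so the integrated identity carries no sign and cannot force $\beta\equiv0$; one would need a gap estimate for the singular values which is not presently available. Compounding this, the singular value decomposition and the entire frame computation are valid only on the regular set $\{\rank df=2\}$, so one must control the branch locus where the rank drops: proving it is negligible (of measure zero, or of the expected low dimension) and that $|\beta|$ extends across it so that the integration by parts is legitimate. Securing an \emph{unconditional} sign for the reaction term together with the requisite regularity of the singular set is, in my view, the central obstacle, and it is precisely for this reason that the rigidity is established here only under explicit conditions on the Hessian and the singular values of $f$.
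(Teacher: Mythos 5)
The statement you set out to prove is the Eells conjecture, and the first thing to say is that the paper does \emph{not} prove it: the authors state explicitly that the conjecture ``is still widely open'' and only establish it under additional hypotheses on the Hessian and on the singular values of $f$ (their Theorems A, B and C). Your proposal has the same status, and you concede as much in your closing paragraph: the pinching hypothesis you invoke to give the reaction term a sign is an \emph{extra assumption}, not a consequence of harmonicity, so the decisive step --- forcing $\lambda_1\equiv\lambda_2$ unconditionally --- is absent. That is a genuine gap, and it is the entire content of the conjecture; what you have written is a (correct) reduction plus a conditional scheme, not a proof of the statement.

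Within the conditional scheme, your skeleton agrees with the paper's: reduce, via the Baird--Wood classification of weakly conformal harmonic maps $\S^3\to\S^2$ (which the paper reproves, using {\L}ojasiewicz's structure theorem and a careful extension of the kernel field across the rank-degenerate locus), to showing that the conformality defect vanishes, and then kill the defect by pinching. But the mechanism you propose --- integrate a Simons-type identity $\tfrac12\lap|\beta|^2=|\nabla\beta|^2+\langle\mathrm{curvature}\rangle\,|\beta|^2-Q(\beta)$ over $\S^3$ and conclude first $\nabla\beta\equiv0$ and then $\beta\equiv0$ --- is precisely the route the paper argues one must avoid. Its Remark following Theorem C points out that the analogous formula here reads $\tfrac12\lap|B|^2=|\nabla B|^2+(\text{zeroth-order terms})$, that for the Hopf fibration $|\nabla B|^2=96\neq0$ so discarding or crudely estimating the derivative term destroys the information one needs, and that the refined Kato inequality is useless since $\bigl|\nabla|B|\bigr|=0$ for the Hopf fibration. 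The paper instead computes Laplacians of scalar combinations of the singular values themselves ($u=\lambda^2+\mu^2$, $w=(\lambda-\mu)^2$, $\rho=\lambda^2-\mu^2$), \emph{never differentiating} $B$, and closes its conditional arguments with pointwise maximum principles and the Hopf boundary point lemma, controlling the degenerate set $\{\mu=0\}$ --- your ``branch locus'' worry, which is indeed where much of the real work lies --- by real analyticity. Moreover, the paper's Bochner--Weitzenb\"ock lemmas show that the zeroth-order terms in these Laplacians are indefinite mixtures such as $\frac{\mu}{\lambda}\sum_k(b^4_{3k})^2$ and $\frac{\lambda}{\mu}\sum_k(b^5_{3k})^2$, not a tidy curvature term times $|\beta|^2$ minus a quartic; so even your conditional step presupposes a clean identity whose existence the paper's own computations cast doubt on, and an integral method would additionally have to justify integration by parts across the set where the singular values are merely continuous.
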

Currently, the conjecture is still widely open. The purpose of the present paper is to support its validity by proving it under some further mild assumptions on $f$. We shall investigate harmonic maps $f : \S^3 \to \S^2$ satisfying suitable bounds on the Hessian $B = \nabla d f$ and the singular values
$\lambda_1\ge \lambda_2\ge\lambda_3$
of $df$, i.e. the eigenvalues of $\sqrt{df^* df}$. Note that, for dimensional reasons, we have that $\lambda_3 = 0$.
In particular, the \emph{$2$-dilation} of $f$ is defined by
\[
\operatorname{D}_2 \doteq \lambda_1 \lambda_2,
\]
The quantity $\operatorname{D}_2$ measures the area distortion of $f$ on tangent $2$-planes.
The study of the $2$-dilation of maps between Riemannian manifolds has recently led to several interesting rigidity and topological applications; see, for example,
\cite{savas5,savas4,tsai,lee1,lee2,ding1,ding2}.

The first main result of our paper is the following one. In a certain sense, the theorem can be considered as an analogue of the well-known pinching property for the second fundamental form of minimal hypersurfaces in spheres due to Simons \cite{simons}, Lawson \cite{lawson} and Chern, Do Carmo and Kobayashi \cite{CdCK}.

\begin{mythm}\label{thmA}
Suppose that $f: \S^3\to\S^2$ is a harmonic map. Then the following hold:
\begin{enumerate}[\rm(1)]
\item
If the squared norm of its Hessian $B$ satisfies
\begin{equation}\label{eq_beautiful}
|B|^2 \le \operatorname{D}_2(\operatorname{D}_2+a),\quad\text{for some}\,\,\,a\in[0,2),
\end{equation}
then $f$ is the composition of a Hopf fibration with a holomorphic map $g: \S^2\to\S^2$ (up to
conjugation).
\smallskip
\item
If the squared norm of its Hessian $B$ satisfies
$$
|B|^2 < \operatorname{D}_2(\operatorname{D}_2+2),
$$
then $f$ is the composition of a Hopf fibration with a M\"obius transformation $g: \S^2\to\S^2$
(up to conjugation).
\end{enumerate}
\end{mythm}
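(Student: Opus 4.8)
The plan is to run a Simons-type scheme, treating the Hessian $B=\nabla df$ as the analogue of the second fundamental form of a minimal hypersurface. I would first record the two algebraic facts that drive everything: writing the singular values as $\lambda_1\ge\lambda_2\ge\lambda_3=0$, one has $|df|^2=\lambda_1^2+\lambda_2^2$, $\operatorname{D}_2=\lambda_1\lambda_2$, and the \emph{conformality defect}
\[
\delta \doteq |df|^2-2\operatorname{D}_2=(\lambda_1-\lambda_2)^2\ge 0,
\]
which vanishes exactly when $df$ is horizontally conformal. Then I would compute the Eells--Sampson Weitzenb\"ock formula for the harmonic map $f$. Using $\Ric_{\S^3}=2\,\gind$ and $R_{\S^2}(X,Y)Z=\langle Y,Z\rangle X-\langle X,Z\rangle Y$, the domain term contributes $2|df|^2$ and the target term contributes exactly $2\operatorname{D}_2^2$ (the remaining summands are killed by $\lambda_3=0$), so that
\[
\tfrac12\Delta|df|^2=|B|^2+2|df|^2-2\operatorname{D}_2^2.
\]
A direct check shows that the Hopf fibration, for which $\lambda_1=\lambda_2$ are constant, saturates $|B|^2=\operatorname{D}_2^2$; this is the origin of the value $a=0$ and signals that \eqref{eq_beautiful} is sharp.

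The technical heart is a second, Simons-type identity for $|B|^2$ itself. Here I would differentiate the harmonic map equation twice, commute covariant derivatives, and feed in the curvatures of $\S^3$ and $\S^2$, together with the Codazzi-type symmetry $(\nabla_X B)(Y,\cdot)-(\nabla_Y B)(X,\cdot)=R^f(\cdots)$, to obtain a refined Bochner inequality of the schematic form
\[
\tfrac12\Delta|B|^2\ \ge\ |\nabla B|^2+\langle B,\nabla^2\tau(f)\rangle+\mathcal{Q}(\lambda_1,\lambda_2)\,|B|^2-\mathcal{P}(\lambda_1,\lambda_2),
\]
where $\tau(f)=0$ kills the middle term and $\mathcal{Q},\mathcal{P}$ are explicit quadratic and quartic expressions in the singular values. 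The point of the computation is to organise $\mathcal{Q}$ and $\mathcal{P}$ so that, after subtracting a suitable multiple of the first-order identity, the pinching quantity $\operatorname{D}_2(\operatorname{D}_2+a)-|B|^2$ and the defect $\delta$ appear with definite signs, and so that the borderline value $a=2$ matches the Ricci eigenvalue $2$ of $\S^3$. I expect this curvature bookkeeping---especially keeping track of the vertical (kernel) direction $e_3$ and the non-K\"ahler, Sasakian nature of $\S^3$---to be the main obstacle.

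With the inequality in hand I would integrate over the closed manifold $\S^3$. Combining the integrated first-order identity $\int|B|^2=2\int(\operatorname{D}_2^2-|df|^2)$ with the integrated Simons inequality and the pinching hypothesis \eqref{eq_beautiful}, every error term acquires a sign, and for $a\in[0,2)$ the only way the resulting chain of inequalities can close is that $\delta\equiv0$; that is, $f$ is horizontally weakly conformal. By the Fuglede--Ishihara theorem a harmonic, horizontally weakly conformal map is a harmonic morphism, and the Baird--Wood classification of harmonic morphisms $\S^3\to\S^2$ then yields $f=g\circ\pi$ with $g:\S^2\to\S^2$ weakly conformal, i.e.\ holomorphic up to conjugation. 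This proves part~(1).

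For part~(2) I would start from the same weak-conformality step, which at the endpoint must survive under the strict bound $|B|^2<\operatorname{D}_2(\operatorname{D}_2+2)$ (this is exactly where the sharpness of the Simons inequality is used, and is the one delicate point I anticipate). Once $\delta\equiv0$, we again have $f=g\circ\pi$ and $|df|^2=2\operatorname{D}_2$, so the first-order identity collapses to $|B|^2=\Delta\operatorname{D}_2-4\operatorname{D}_2+2\operatorname{D}_2^2$. Substituting into the strict inequality gives the pointwise differential inequality $\Delta\operatorname{D}_2<\operatorname{D}_2(6-\operatorname{D}_2)$. Since $\operatorname{D}_2\ge0$, at any point where $g$ has a branch point one has $\operatorname{D}_2=0$, hence a global minimum with $\Delta\operatorname{D}_2\ge0$, contradicting $\Delta\operatorname{D}_2<0$ there. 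Therefore $\operatorname{D}_2>0$ everywhere, $g$ is an unbranched holomorphic self-map of the simply connected surface $\S^2$, hence of degree one, i.e.\ a M\"obius transformation.
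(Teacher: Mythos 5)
Your first-order identity is correct --- it is precisely the paper's Lemma~\ref{lapu} --- and your endgame (weak conformality, then Fuglede--Ishihara and the Baird--Wood classification, which the paper reproves as Theorem~\ref{bairdwood}) is also the paper's endgame. But the technical heart of your argument, the Simons-type inequality $\tfrac12\Delta|B|^2\ge|\nabla B|^2+\mathcal{Q}\,|B|^2-\mathcal{P}$ organized so that ``every error term acquires a sign,'' is never derived, only postulated, and there is a concrete structural reason why the scheme as described cannot close. For the Hopf fibration one has $|B|^2\equiv16$, hence $\Delta|B|^2\equiv0$, while $|\nabla B|^2\equiv96$; so in any identity $\tfrac12\Delta|B|^2=|\nabla B|^2+(\text{zeroth order terms})$, the zeroth-order terms must equal $-96$ at the Hopf fibration --- a map which satisfies your hypothesis \eqref{eq_beautiful} (strictly, for every $a>0$). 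Your integrated inequality $0\ge\int|\nabla B|^2+\int(\mathcal{Q}\,|B|^2-\mathcal{P})$ is therefore perfectly compatible with a large negative zeroth-order integrand, and sign considerations alone cannot force $(\lambda_1-\lambda_2)^2\equiv0$: one would need a lower bound on $\int|\nabla B|^2$ in terms of zeroth-order quantities, which is exactly the missing (and hard) ingredient. This is the crucial difference from the minimal-hypersurface case, where $|\nabla B|^2=0$ at the Clifford torus, so discarding the gradient term is lossless at the model; here the gradient term carries essential information, and even the refined Kato inequality $|\nabla B|^2\ge\tfrac32\big|\nabla|B|\big|^2$ is useless because its right-hand side vanishes at the Hopf fibration. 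This obstruction is stated explicitly by the authors in the remark following Theorem~\ref{thmC}, and it is the reason their proof never differentiates $B$ at all.

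What the paper does instead is apply the maximum principle to $w=(\lambda-\mu)^2$ directly, using Bochner formulas for the \emph{singular values} (Lemmas~\ref{7.1}--\ref{lapw}). At an interior maximum of $w$ in $U=\{\lambda>\mu>0\}$, the critical-point condition $b^4_{1k}=b^5_{2k}$ together with Young's inequality yields $\tfrac12\Delta w\ge\big(\lambda\mu+2-|B|^2/\lambda\mu\big)w>0$ under \eqref{eq_beautiful} with $a<2$, a contradiction; the genuinely delicate case is a maximum sitting on the degenerate set $\{\mu=0\}$, where the singular values fail to be smooth. Handling it occupies most of the paper's proof of part (1): the \L ojasiewicz structure theorem for the analytic set $\{\mu=0\}$, the continuous extension of $\nabla w$ by zero across it, and Hopf's boundary point lemma on a ball touching it. Your integral scheme never engages this difficulty, even though your $\mathcal{Q},\mathcal{P}$ are functions of $(\lambda_1,\lambda_2)$ that degenerate exactly there. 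Two smaller points: the threshold $2$ in the theorem comes from the Ricci term $w(\lambda\mu+2)$ in $\Delta w$, not from the Hopf fibration ``saturating'' $a=0$ (it satisfies the hypothesis strictly for all $a>0$, as the paper remarks, so it is not the borderline model in the Clifford-torus sense); and your part (2) endpoint is more involved than necessary --- once weak conformality is known, $0\le|B|^2<\operatorname{D}_2(\operatorname{D}_2+2)$ forces $\operatorname{D}_2>0$ pointwise, so $f$ is a submersion and Baird--Wood gives a M\"obius transformation directly --- but in any case it rests on the unproven weak-conformality step.
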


\begin{remark}
Seeking for a close analogy with the case of minimal hypersurfaces, one may presume that the Hopf fibration plays a role similar to that of the Clifford torus. However, this doesn't seem to be the case for Theorem \ref{thmA}, as the Hopf fibration satisfies \eqref{eq_beautiful} with \emph{strict} inequality for any $a>0$, being $|B|^2 = 16$ and $\operatorname{D}_2 = 4$.
\end{remark}

For harmonic maps with totally geodesic fibers, this result can be improved to:

\begin{mythm}\label{thmC}
Suppose that $f: \S^3\to\S^2$ is a harmonic map with totally geodesic fibers. If the squared norm of its Hessian $B$ satisfies
$$
|B|^2\le 2\operatorname{D}_2(\operatorname{D}_2+2),
$$
then $f$ is the composition of a Hopf fibration with a holomorphic map $g: \S^2\to\S^2$ (up to
conjugation).
\end{mythm}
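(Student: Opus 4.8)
The plan is to reduce the statement to forcing horizontal weak conformality $\lambda_1=\lambda_2$ and then to invoke the classification of harmonic morphisms from $\S^3$, improving the threshold of Theorem~\ref{thmA} by means of the totally geodesic hypothesis. Fix $p$ with $df(p)\neq0$ and pick a local orthonormal frame $\{e_1,e_2,e_3\}$ of $T\S^3$ adapted to $f$, with $e_3$ spanning $\ker df$ and $df(e_i)=\lambda_i\epsilon_i$ for $i=1,2$, where $\{\epsilon_1,\epsilon_2\}$ is orthonormal in $T\S^2$. Since the fibers are totally geodesic, every integral curve of $e_3$ is a geodesic, so $\nabla_{e_3}e_3=0$ and hence $B(e_3,e_3)=-df(\nabla_{e_3}e_3)=0$. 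Combined with harmonicity $\operatorname{tr}B=0$ this gives $B(e_1,e_1)=-B(e_2,e_2)$, and writing $B_{ij}^\alpha=\langle B(e_i,e_j),\epsilon_\alpha\rangle$ we obtain
\[
|B|^2=2\sum_{\alpha=1}^{2}\Big((B_{11}^\alpha)^2+(B_{12}^\alpha)^2+(B_{13}^\alpha)^2+(B_{23}^\alpha)^2\Big).
\]

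The analytic engine consists of two Bochner identities for the round metrics. The first is the Eells--Sampson formula, which here reads
\[
\tfrac12\Delta|df|^2=|B|^2+2|df|^2-2\operatorname{D}_2^2
\]
(consistent with the Hopf fibration, where $|B|^2=16$, $2|df|^2=16$ and $2\operatorname{D}_2^2=32$). The second concerns the Jacobian two-form $\eta\doteq f^*\omega$, with $\omega$ the area form of $\S^2$, which is closed and satisfies $|\eta|=\operatorname{D}_2$; the Weitzenböck formula on $\S^3$, where the curvature operator acts on two-forms as $2\,\Id$, yields
\[
\tfrac12\Delta\operatorname{D}_2^2=|\nabla\eta|^2-\langle d\delta\eta,\eta\rangle+2\operatorname{D}_2^2.
\]
Using the structure equations of the adapted frame one expresses $\nabla\eta$ and $\delta\eta$ through the $B_{ij}^\alpha$ and the $\lambda_i$; the totally geodesic condition, equivalently $\nabla_{e_3}e_3=0$, trivializes the vertical connection form along the fibers and removes precisely the terms that, in the general case of Theorem~\ref{thmA}, restricted the admissible bound to $\operatorname{D}_2(\operatorname{D}_2+2)$.

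Combining the two identities with the reductions $B_{33}^\alpha=0$ and $B_{22}^\alpha=-B_{11}^\alpha$, I expect the smooth nonnegative conformality defect
\[
\Phi\doteq|df|^4-4\operatorname{D}_2^2=(\lambda_1^2-\lambda_2^2)^2
\]
to obey, under the hypothesis $|B|^2\le2\operatorname{D}_2(\operatorname{D}_2+2)$, a differential inequality of the form $\Delta\Phi\ge c\,\Phi$ with $c>0$ coming from the positive curvatures. At a maximum of $\Phi$ on the compact manifold $\S^3$ one has $\Delta\Phi\le0$, so $c\,\Phi\le0$ there, forcing $\max\Phi=0$ and hence $\Phi\equiv0$. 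Thus $\lambda_1\equiv\lambda_2$, so $f$ is horizontally weakly conformal, i.e. a harmonic morphism, whose fibers are the totally geodesic great circles foliating $\S^3$. By the classification of harmonic morphisms from $\S^3$ to a surface, this foliation coincides, up to an isometry of $\S^3$, with the Hopf fibration $\pi$, so $f=g\circ\pi$; as $\pi$ is a harmonic morphism and $f$ is harmonic, the induced map $g\colon\S^2\to\S^2$ is harmonic, and being weakly conformal it is holomorphic up to conjugation.

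The main obstacle is the sharp form of the second Bochner identity: one must compute $\nabla\eta$ and $\delta\eta$ explicitly via the structure equations, isolate the contribution of the vertical connection form, and verify that its vanishing, the totally geodesic condition, is exactly what upgrades the admissible constant from $\operatorname{D}_2(\operatorname{D}_2+2)$ to $2\operatorname{D}_2(\operatorname{D}_2+2)$ while preserving $c>0$ in $\Delta\Phi\ge c\,\Phi$. By comparison, the identification of the great-circle fibration with $\pi$ and the holomorphicity of the weakly conformal harmonic map $g$ are standard.
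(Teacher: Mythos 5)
Your skeleton matches the paper's: reduce to horizontal weak conformality $\lambda_1\equiv\lambda_2$ and then invoke the classification of weakly conformal harmonic maps (Theorem \ref{bairdwood}) together with holomorphicity of the induced map; your structural reductions (totally geodesic fibers give $B_{33}=0$, harmonicity then gives $B_{11}=-B_{22}$, and your first Bochner identity is exactly Lemma \ref{lapu}) are all correct. The problem is that the analytic core of your argument is only \emph{announced}, not proved, and it cannot be proved in the form you state. Everything rests on the claim $\Delta\Phi\ge c\,\Phi$ with $c>0$, which you say you ``expect''; at the borderline constant $2\operatorname{D}_2(\operatorname{D}_2+2)$ no such strict inequality is available. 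The sharpest pointwise estimate the structure yields (the paper's \eqref{lapw ineq}, for $w=(\lambda-\mu)^2$) is
\begin{equation*}
\tfrac12\Delta w \;\ge\; \Big(\lambda\mu+2-\tfrac{|B|^2}{2\lambda\mu}\Big)w \;\ge\; 0,
\end{equation*}
and the coefficient vanishes identically when the pinching is saturated, a case your hypothesis permits. Consequently the ``evaluate at the maximum'' step produces no contradiction: $\Delta\Phi\le 0$ at a maximum is perfectly compatible with $\Delta\Phi\ge 0$. This is exactly why the paper does not stop at the maximum point: it (i) establishes $\Delta w\ge 0$ on all of $U$, (ii) upgrades this to $w$ \emph{constant} via Hopf's boundary point lemma (needed in particular near the degenerate locus $\{\lambda_2=0\}$, where $w$ fails to be smooth), and (iii) rules out a positive constant by an equality-case analysis: $w\equiv\mathrm{const}>0$ forces $|B|^2\equiv 2\lambda\mu(\lambda\mu+2)$, whence Lemma \ref{lapu} gives $\tfrac12\Delta u=2(\lambda+\mu)^2\ge 0$, so $u$ is constant and $f$ is constant, a contradiction. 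Your proposal has no counterpart to (ii) or (iii).

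Moreover, your specific choice $\Phi=(\lambda_1^2-\lambda_2^2)^2$ makes the requisite inequality harder, not easier. Writing $\rho=\lambda^2-\mu^2$, Lemma \ref{lapbarw} with $b_{33}^4=b_{33}^5=0$ and $B_{11}=-B_{22}$ gives $\tfrac12\Delta\rho=2\rho+2\rho^{-1}\sum_k(\lambda b_{2k}^4+\mu b_{1k}^5)^2+2(b_{13}^4)^2-2(b_{23}^5)^2$, so $\Delta\Phi\ge 2\rho\,\Delta\rho$ carries a negative term of size $\rho\,(b_{23}^5)^2$, where the hypothesis bounds $(b_{23}^5)^2$ only by a multiple of $\lambda\mu(\lambda\mu+2)$ --- nothing proportional to $\rho$. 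Even using $\nabla\Phi=0$ at the maximum (which yields $\mu b_{2k}^5=\lambda b_{1k}^4$, hence $b_{23}^5=(\lambda/\mu)\,b_{13}^4$), the negative contribution becomes $-2(\lambda^2/\mu^2-1)(b_{13}^4)^2$ and cannot be absorbed by $2\rho$ under the stated pinching. The reason the paper's function $w=(\lambda-\mu)^2$ works is the precise grouping in \eqref{lapw eq}: the Hessian entries organize into complete squares plus terms with coefficients $2-\mu/\lambda-\lambda/\mu=-w/(\lambda\mu)$, so the full-strength bound on $|B|^2$ enters with no loss. Finally, your second Weitzenb\"ock identity leaves the term $\langle d\delta\eta,\eta\rangle$ uncomputed (under your conventions it equals $64$ for the Hopf fibration, so it is not negligible); the paper avoids this issue entirely by computing $\Delta(\lambda\mu)$ directly in a singular-value frame (Lemma \ref{lapv}). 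In short: right skeleton, but the differential inequality that constitutes the proof is missing, and with your choice of $\Phi$ and a strict constant $c>0$ it is unattainable at the stated threshold; repairing the argument requires the strong maximum principle plus the equality-case analysis, as in the paper.
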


\begin{remark}
	From a technical viewpoint, there is a significant difference with the case of minimal hypersurfaces. If $f : M^n \to \S^{n+1}$ is a minimal hypersurface whose second fundamental form
	$B$ satisfies $|B|^2 \le n$, the rigidity result in \cite{simons,lawson,CdCK} uses the maximum principle applied to Simons' formula   
	\[
	\frac{1}{2} \Delta |B|^2 = |\nabla B|^2 + n|B|^2 - |B|^4.
	\]
	The Clifford torus is obtained in the case $|B|^2 = n$ by looking at minimal hypersurfaces satisfying $|\nabla B|^2 = 0$. In our setting, for harmonic maps $f : \S^3 \to \S^2$, one can compute the Laplacian of $|B|^2$ and obtain a more complicated formula of the type
	\[
	\Delta |B|^2 =2 |\nabla B|^2 \ + \ (\text{terms of order zero in $B$}).
	\]
	However, if $f$ is the Hopf fibration then $|\nabla B|^2 = 96$, hence discarding the term one loses much information. Furthermore, the refined Kato inequality for harmonic maps 
	\[
	|\nabla B|^2 \ge \frac{3}{2} \big| \nabla |B| \big|^2
	\]
	is of no help, since for the Hopf fibration the right-hand side is zero. For these reasons, our pinching Theorems \ref{thmA} and \ref{thmC} will instead be obtained by applying maximum principles to carefully chosen combinations of the singular values of $f$, without differentiating $B$.  
\end{remark}

In our last main result, we consider harmonic maps whose singular values satisfy a suitable identity, without imposing bounds on $B$. In particular, the theorem applies to characterize harmonic maps \( f : \mathbb{S}^{3} \to \mathbb{S}^{2} \) with constant energy density $|df|^2$ or constant 2-dilation $\operatorname{D}_2$. To state the result, recall that a function $f: C \subseteq \R^n \to \R$ is said to be $C^1$ on a subset $C$ if it can be extended to a $C^1$-function on an open set containing $C$.

\begin{mythm}\label{thmB}
Suppose that \( f : \mathbb{S}^{3} \to \mathbb{S}^{2} \) is a non-constant harmonic map whose two largest
singular values \( \lambda_{1}\ge \lambda_{2} \) satisfy
${W}(\lambda_1,\lambda_2)= 0,$ for some function $W$ which is continuous on the set  
\[
V = \big\{ (x,y) \in \R^2 \ : \ x \ge y \ge 0 \},
\]
of class $C^1$ on $V \cap \{x > y\} \cap W^{-1}(0)$ and there it satisfies 
\begin{equation}\label{eq_Weingarten}
y \frac{\partial {W}}{\partial x} + x \frac{\partial {W}}{\partial y} \neq 0, \quad \text{for all} \,\,\, (x,y) \in V \cap \{x > y\} \cap W^{-1}(0)
\end{equation}
Then, $f$ is the composition of a Hopf fibration with a holomorphic map $g: \S^2\to\S^2$ (up to
conjugation).
\end{mythm}

\begin{mythm}\label{thmD}
	Suppose that \( f : \mathbb{S}^{3} \to \mathbb{S}^{2} \) is a non-constant harmonic map with constant energy density or constant $2$-dilation. Then, $f$ is a Hopf fibration.
\end{mythm}

Let us conclude by pointing out some additional characterizations of the Hopf fibration within certain classes of maps from $\S^3$ to $\S^2$. We highlight the following three:
\begin{itemize}
\item Due to a result by Escobales \cite{escobales}, the (scaled) Hopf fibration is the only 
Riemannian submersion from $\S^3$ to $\S^2(1/2)$ up to isometry.
\smallskip
\item DeTurck, Gluck and Storm \cite{Turck} proved that a map $f: \S^3\to\S^2$ with non-zero Hopf invariant and singular values less than or equal to $2$ must
coincide with a Hopf fibration. In particular, up to isometries of domain and range, the Hopf fibration is the unique Lipschitz constant minimizer in its homotopy class.
\smallskip
\item The Hopf fibration has $2$-dilation $\operatorname{D}_2=4$. Recently, Guth and Lee \cite{lee} proved that
the Hopf invariant of a map $f:\S^3\to\S^2$ satisfies
$$
|{\rm Hopf}(f)|\le\frac{1}{16}\max_{\S^3} \operatorname{D}_2^2.
$$
Consequently, if $\max_{\S^3} \operatorname{D}_2<4$ then
$f$ is homotopically trivial. 
\end{itemize}

\noindent \textbf{Acknowledgements.} L.M. is supported by the PRIN project no. 20225J97H5 ``Differential-geometric aspects of manifolds via Global Analysis''.

\noindent \textbf{Conflict of Interests.} The authors have no conflict of interest.

\noindent \textbf{Data availability statement.} No data was generated by this research.

\section{Basics on maps from $\S^3$ to $\S^2$}
Let \(f : U \subset \mathbb{S}^3 \to \mathbb{S}^2\) be a smooth map, \(U\) being an open subset of \(\mathbb{S}^3\). 
The Euclidean metric on \(\mathbb{R}^4\) is denoted by \(\langle \cdot\,,\cdot \rangle\), and the same notation is used for the induced metrics on \(\mathbb{S}^3\) and \(\mathbb{S}^2\). 
Let \(D\) be the Levi-Civita connection of \(\mathbb{R}^4\), and \(\nabla\) the Levi-Civita connections of \(\mathbb{S}^3\) and \(\mathbb{S}^2\). We also use the same symbol $\nabla$ for the pullback connection on the bundle \(f^{*}T\mathbb{S}^2\). Denote by 
\[
\mathcal{V} = \ker df \quad\text{and}\quad \mathcal{H} = \mathcal{V}^\perp, 
\]
respectively, the {\em vertical} and {\em horizontal} distributions.

\subsection{The singular value decomposition}
Denote by
$$\lambda_1\ge\lambda_2\ge\lambda_3$$
the {\em singular values} of $df$, i.e. the eigenvalues of $(df^*df)^{1/2}$,
where $df^*$ is the transpose of $df$ with respect to the given metrics. Note that the singular values are continuous functions and that $\lambda_3\equiv 0$
for dimensional reasons. Suppose at the moment that on $U$
the kernel $\mathcal{V}$ of $df$ is a line bundle. 
Hence, in $U$ the two largest singular values of $df$ are non-zero. For simplicity, let us denote them by
$$\lambda\doteq\lambda_1\ge\lambda_2\doteq\mu>0.$$
At points where
$$\lambda>\mu>0$$
the singular values are even smooth, as the corresponding eigendistributions of $df^* df$ locally have constant dimensions, see \cite{nomizu}.
Fix $p \in U$ and consider an orthonormal basis $\{\alpha_{1}, \alpha_{2}, \alpha_{3}\}$ of $T_p\S^3$
and an orthonormal basis $\{\beta_{4}, \beta_{5}\}$ of $T_{f(p)}\S^2$
such that
\begin{equation}\label{singular}
\begin{array}{ccc}
d f(\alpha_{1})=\lambda\beta_4, & d f(\alpha_{2})=\mu \beta_{5}, & d f(\alpha_{3})=0.
\end{array}
\end{equation}
The map $f: U\subset\S^3\to\S^2$ is called:
\begin{enumerate}[\rm(1)]
\item {\em Horizontally conformal} if $df: \mathcal{H}\to T\S^2$ is conformal or, equivalently, if and only if its singular values satisfy
$\lambda\equiv \mu>0$ everywhere on $U$.
\smallskip
\item {\em Weakly conformal} if its two largest singular values satisfy
$\lambda\equiv \mu\ge 0$, everywhere on $U$.
\end{enumerate}
Note that in the case of weakly conformal maps we allow points where the differential of $f$ vanishes completely.

There are several quantities which encode information about how far $f: U\subset\S^3\to\S^2$ is from being
weakly conformal.
Consider the squared norm $u$ of the differential of $f$. In terms of the singular values of $f$
the function $u$ has the form
\begin{equation}\label{energy}
u\doteq |df|^2=\lambda^2+\mu^2.
\end{equation}
The function $u$ is also known in the literature as (twice) the {\em energy density} of $f$. Since $\mathcal{H}$ is
2-dimensional, it possesses a complex structure $J_{\mathcal{H}}$. Let
$\omega_{\S^2}$ be the volume form of $\S^2$. One can readily check that
the $2$-form $f^*\omega_{\S^2}$ is non-zero only on $\mathcal{H}$. Consequently, it must be a multiple of the volume
form $\omega_{\mathcal{H}}$ of $\mathcal{H}$. This means that there exists a smooth function $v$ on $U$ such that
\begin{equation}\label{2dil}
f^*\omega_{\S^2}\doteq v\,\omega_{\mathcal{H}}.
\end{equation}
In terms of the singular values of $f$ we have that
\begin{equation}\label{vdef}
|v|=\operatorname{D}_2=\lambda\mu,
\end{equation}
whence $v$ extends continuously to the entire $\mathbb{S}^3$ by setting $v= 0$ at points where the dimension of $\mathcal{H}$ is
less than 2. Without loss of generality we may assume that $v$ is positive on $U$.
Observe that
$$w\doteq u-2|v|=(\lambda-\mu)^2$$
is a non-negative function. In particular, if at some point $p_0\in U$ it holds that
$w(p_0)=0,$ then
$\lambda(p_0)=\mu(p_0).$
Hence $w$ measures how much $f$ deviates from being weakly conformal.

\subsection{The Hessian and the tension field}
The {\em Hessian} of $f: \S^3\to\S^2$ is defined to be the symmetric vector valued tensor $B$ given by
$$
B(X,Y)\doteq\nabla_{X}df(Y)-df(\nabla_XY),\quad\text{for}\,\,\, X,Y\in\mathfrak{X}(\S^3),
$$
where the first $\nabla$ stands for the natural connection on the pull-back bundle $f^*T\S^2$.
By straightforward computations, it follows that
\begin{equation}\label{codazzi}
\begin{array}{lcl}	
\disp (\nabla_XB)(Y,Z)-(\nabla_ZB)(X,Y)
& = & \disp \big\{\langle X,Y\rangle-\langle df(X),df(Y)\rangle \big\}df(Z) \\[0.3cm]
& & \disp -\big\{\langle Y,Z\rangle-\langle df(Y),df(Z)\rangle \big\}df(X),
\end{array}
\end{equation}
for all $X,Y,Z\in\mathfrak{X}(\S^3)$.
Equation \eqref{codazzi} will be referred to as the {\em Codazzi equation}.
The trace of $B$
is called the {\em tension field} $\tau(f)$ of $f$. If $\tau(f)$ is identically zero the map $f$
is called {\em harmonic}. It is well-known that a harmonic map is real-analytic.
Throughout the paper we adopt the following terminology:
\begin{equation*}
B_{ij}=B(\alpha_i,\alpha_j), \qquad b_{ij}^{c}\doteq B^{c}(\alpha_i,\alpha_j)\doteq\langle B(\alpha_{i}, \alpha_{j}), \beta_{c}\rangle,
\end{equation*}
where $i,j,k \in \{1,2,3\}$ and $c \in \{4,5\}$. Moreover, at points where the frame $\{\alpha_1,\alpha_2,\alpha_3\}$ is smooth we
set
$$
\omega_{ij}(\alpha_k)\doteq\langle\nabla_{\alpha_k}\alpha_i,\alpha_j\rangle.
$$

\section{The Hopf fibration and harmonic unit vector fields}

\subsection{The Hopf fibration}\label{sec_hopf}

Let us briefly describe the Hopf fibration,
following the exposition of Pinkall \cite{pinkall}. Identify $\R^4$ with the {\em quaternions}
$$
\mathbb{H} \doteq\{x_0 + x_1  i + x_2 j + x_3 k : x_0, x_1, x_2, x_3\in\R\},
$$
which form an associative algebra with $1$ as the multiplicative unit via
$$
i^2=j^2=k^2=ijk=-1.
$$
Denote by $\Re \mathbb{H}$ the $1$-dimensional linear subspace spanned by the element $1$, and
by $\Im \mathbb{H}$ the orthogonal complement of $\Re \mathbb{H}$. Given an element
$$x=x_0+x_1i+x_2j+x_3k$$
we define its {\em conjugate} as the element
$$
\overline{x}\doteq x_0-x_1i-x_2j-x_3k.
$$
Multiplication by $i$, $j$ and $k$ gives rise to complex structures $J_1$, $J_2$ and $J_3$ on $\mathbb{H}$ by
$$
J_1x\doteq i\cdot x,\quad J_2x\doteq j\cdot x\quad\text{and}\quad J_3x\doteq k\cdot x,\quad\text{for all}\,\,\, x\in\mathbb{H}.
$$
Consider the unit sphere $\S^{3}\subset \mathbb{H}$ as the subset of quaternions of length $1$,
and identify $\S^2$ with  the unit sphere in the subspace of $\mathbb{H}$ spanned by $1$, $j$ and $k$.
Let $\tilde\cdot$  denote  the anti-automorphism that fixes $1$, $j$ and $k$ and sends $i$ to $-i$, i.e.
$$
x_0+x_1i+x_2j+x_3k=x\mapsto\tilde{x}=x_0-x_1i+x_2j+x_3k.
$$
\begin{lemma}\label{hopfsmap}
	Let $\pi: \S^3\to\mathbb{H}$ be the smooth map given by
	$
	\pi(p)\doteq\tilde{p}\cdot p$, for each $p\in\S^3.$
	Then:
	\begin{enumerate}[\rm(1)]
		\item If $(x_0,x_1,x_2,x_3)\in\S^3\subset\mathbb{H}$, the map $\pi$ has the form
		$$\pi(x_{0},x_{1},x_{2},x_{3})=(x_{0}^{2}+x_{1}^{2}-x_{2}^{2}-x_{3}^{2},2(x_{0}x_{2}+x_{1}x_{3}),2(x_{0}x_{3}-x_{1}x_{2})).$$
		\item The map $\pi: \S^3\to\S^2$ is a surjective submersion.
		\medskip
		\item $\pi((\cos\theta+i\sin\theta)p)=\pi(p)$, for $\theta\in[0,2\pi]$ and $p\in\S^3$.
		\medskip
		\item $\{\alpha_1(p)=j\cdot p,\alpha_2(p)=k\cdot p,\alpha_3=i\cdot p\}$ is an
		orthonormal frame on $\S^3$.
		\medskip
		\item
		$\mathcal{V}={\rm ker}\,d\pi=\span\{\alpha_3\}$ and $\mathcal{H}=\mathcal{V}^{\perp}=\span\{\alpha_1,\alpha_2\}.$ In particular,
		the integral curves of $\alpha_3$ are great circles.
		\medskip
		\item
		The singular values of $\pi$ are constant; namely $\lambda_1=2=\lambda_2$ and $\lambda_3=0$.
		\medskip
		\item The map $f\doteq(1/2)\pi: \S^3\to\S^2(1/2)$ is a Riemannian submersion.
	\end{enumerate}
\end{lemma}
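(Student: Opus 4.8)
The plan is to reduce every assertion to elementary quaternionic algebra, exploiting three facts: the tilde map is an \emph{anti}-automorphism with $\tilde{1}=1$, $\tilde{i}=-i$, $\tilde{j}=j$, $\tilde{k}=k$ (so $\widetilde{xy}=\tilde{y}\tilde{x}$); the quaternionic norm is multiplicative, $|xy|=|x|\,|y|$; and left multiplication by a unit quaternion is a linear isometry of $\mathbb{H}\cong\R^4$. Throughout I identify $T_p\S^3$ with the orthogonal complement $p^{\perp}\subset\mathbb{H}$.

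For (1) I would expand $\pi(p)=\tilde{p}\,p$ componentwise using $i^2=j^2=k^2=-1$ together with $ij=k$, $jk=i$, $ki=j$; the coefficient of $i$ cancels, leaving exactly the stated formula and showing $\pi(p)\in\span\{1,j,k\}$. That the image lies on $\S^2$ is immediate, since $|\pi(p)|=|\tilde{p}|\,|p|=|p|^2=1$. For (3) the anti-automorphism property gives, with $q=\cos\theta+i\sin\theta$,
\[
\pi(qp)=\widetilde{qp}\,(qp)=\tilde{p}\,\tilde{q}\,q\,p=\tilde{p}\,\bar{q}\,q\,p=\tilde{p}\,p=\pi(p),
\]
where $\tilde{q}=\bar{q}$ because $q\in\span\{1,i\}$, and $\bar{q}q=|q|^2=1$.

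The heart of the argument is a single computation yielding (2), (4), (5) and (6) simultaneously. Viewing $\pi$ as a map on all of $\mathbb{H}$, linearity of tilde gives the differential
\[
d\pi_p(v)=\tilde{v}\,p+\tilde{p}\,v,\qquad v\in\mathbb{H}.
\]
Evaluating on the frame of (4): since $\tilde{i}=-i$ one gets $d\pi_p(ip)=\tilde{p}(-i)p+\tilde{p}\,ip=0$, so $\alpha_3=ip\in\Ker d\pi$; since $\tilde{j}=j$ and $\tilde{k}=k$ one gets $d\pi_p(jp)=2\,\tilde{p}\,jp$ and $d\pi_p(kp)=2\,\tilde{p}\,kp$. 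The set $\{ip,jp,kp\}$ is orthonormal and tangent to $\S^3$ (one checks $\langle ip,p\rangle=\langle jp,p\rangle=\langle kp,p\rangle=0$ and mutual orthogonality directly, or observes that these are the images of $p$ under the orthogonal complex structures $J_1,J_2,J_3$), which is (4). Because left multiplication by the unit quaternion $\tilde{p}$ is an isometry, $d\pi_p(jp)$ and $d\pi_p(kp)$ are orthogonal vectors of norm $2$; hence $\operatorname{rank} d\pi_p=2$, so $\pi$ is a submersion. As a submersion from the compact $\S^3$ it has image both open and closed, hence surjective onto the connected $\S^2$, giving (2). The kernel is then exactly $\span\{\alpha_3\}$, which is (5); the integral curves of $\alpha_3=ip$ solve $\gamma'=i\gamma$, i.e. $\gamma(\theta)=e^{i\theta}p$, which are great circles. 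Finally, reading off $|d\pi(\alpha_1)|=|d\pi(\alpha_2)|=2$ together with $d\pi(\alpha_3)=0$ yields the singular values $\lambda_1=\lambda_2=2$, $\lambda_3=0$ of (6).

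Part (7) is then immediate: for $f=\tfrac{1}{2}\pi$ one has $df=\tfrac{1}{2}d\pi$, so $df$ sends the orthonormal horizontal frame $\{\alpha_1,\alpha_2\}$ to an orthonormal pair in $T_{f(p)}\S^2(1/2)$; thus $df|_{\mathcal{H}}$ is an isometry and $f$ is a Riemannian submersion. There is no serious obstacle here—the whole lemma is bookkeeping—so the only point requiring genuine care is the consistent use of the anti-automorphism rule $\widetilde{xy}=\tilde{y}\tilde{x}$ and the signs $\tilde{i}=-i$, $\tilde{j}=j$, $\tilde{k}=k$ in the differential computation, from which parts (2), (4), (5) and (6) all drop out at once.
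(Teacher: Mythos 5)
Your proof is correct. Note that the paper itself offers no proof of this lemma: it is stated as a list of standard facts ``following the exposition of Pinkall,'' so there is nothing to compare against --- your computation simply supplies the verification the authors leave implicit. The argument is sound at every step: the expansion in (1), the cancellation $\tilde{q}q=\bar{q}q=1$ in (3), and above all the formula $d\pi_p(v)=\tilde{v}\,p+\tilde{p}\,v$, from which $d\pi_p(ip)=0$, $d\pi_p(jp)=2\tilde{p}jp$, $d\pi_p(kp)=2\tilde{p}kp$ follow and yield (2), (4)--(6) at once (using that left multiplication by the unit quaternion $\tilde{p}$ is an isometry), with (7) an immediate consequence. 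The only step stated slightly elliptically is the mutual orthogonality in (4), which indeed follows from the identity $\langle J_a p, J_b p\rangle=-\langle p, J_aJ_b p\rangle$ together with $J_1J_2=J_3$ (and cyclic permutations), since $\langle p, J_c p\rangle=0$ for skew-symmetric $J_c$; this is exactly the ``direct check'' you allude to.
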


The map $f$ described in the lemma above is known as the {\em standard Hopf fibration} and the associated vector
field $\alpha_3=i\cdot p$ as the {\em standard Hopf vector field}. Any composition of $f$ with
isometries or dilations of the corresponding spheres is called a {\em Hopf fibration}. Furthermore, any unit vector field
$\zeta$ on $\S^3$ that can be expressed in the form
$$\S^3\ni p\mapsto\zeta_p=J(p)\in T_p\S^3,$$
where $J$ is a complex structure on $\R^4$,
is referred to in the literature as a {\em Hopf vector field}. Observe that the Hopf fibration is precisely the quotient map associated with the great circle fibration generated by the corresponding Hopf vector field.

In the next lemma we collect some further properties of a Hopf fibration.

\begin{lemma}
	Let $\pi: \S^{3}\to\S^2$ be the map given in Lemma \ref{hopfsmap} and $\{\alpha_1,\alpha_2,\alpha_3=\zeta\}$ the globally defined
	orthonormal frame on $\S^3$ arising from the singular value decomposition of $\pi$. Then:
	\begin{enumerate}[\rm(1)]
		\item
		The $(1,1)$-tensor $\varphi$ given by
		$$
		\varphi(X)\doteq-\nabla_X\zeta
		$$
		is a complex structure on $\mathcal{H}$. Moreover, $\zeta$ satisfies the differential equation
		\begin{equation*}
			\Delta\zeta+|\nabla\zeta|^2\zeta=0,
		\end{equation*}
		where $\Delta$ is the rough Laplacian on vector fields.
		\medskip
		\item
		The map $\pi$ is harmonic, and the components of its Hessian with respect to the bases $\{\alpha_1,\alpha_2,\alpha_3;\beta_4,\beta_5\}$ arising from the singular value decomposition of $\pi$ are given by
		$$
		B^{4}=
		\left( {\begin{array}{rrr}
				0 & 0 & 0\\
				0 & 0 & -2 \\
				0 & -2 & 0\\
		\end{array} } \right)
		\quad\text{and}\quad
		B^{5}=
		\left( {\begin{array}{rrr}
				\,\,0 & 0 & 2\\
				0 &\,\,0 & 0 \\
				\,\,2 & \,\,0 & 0\\
		\end{array} } \right),
		$$
		respectively. Moreover
		$$|B|^2=16\quad\text{and}\quad|\nabla B|^2=6|B|^2=96.$$
		\item Suppose that $g: \S^2\to\S^2$ is a conformal map. Then the composition $h=g\circ\pi$ is again
		a harmonic map with totally geodesic fibers. Moreover, the Hessian $B_h$ of $h$ is given by
		$$
		B^{4}_h=2
		\left( {\begin{array}{ccr}
				2\beta_4(\sigma) & \,\,\,\,2\beta_5(\sigma) & 0\\
				2\beta_5(\sigma) & -2\beta_4(\sigma) & \sigma \\
				0 & \sigma & 0\\
		\end{array} } \right)
		\quad\text{and}\quad
		B^{5}_h=2
		\left( {\begin{array}{ccr}
				-2\beta_5(\sigma) & \,\,\,\,2\beta_4(\sigma) & -\sigma\\
				\,\,\,\,2\beta_4(\sigma) &\,\,\,\,2\beta_5(\sigma) & 0 \\
				-\sigma & 0 & 0\\
		\end{array} } \right),
		$$
		where $\sigma$ is the (double) singular value of $g$.
		Moreover,
		\begin{equation}\label{estbh1}
			|B_h|^2=16(\sigma^2+4|\nabla\sigma|^2).
		\end{equation}
		If $g$ is neither constant nor an isometry, then
		\begin{equation}\label{estbh2}
			\max|B_h|^2>16.
		\end{equation}
	\end{enumerate}
\end{lemma}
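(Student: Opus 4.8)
The plan is to prove all three parts by explicit computation in the global quaternionic frame $\{\alpha_1 = j\cdot p,\ \alpha_2 = k\cdot p,\ \alpha_3 = \zeta = i\cdot p\}$ supplied by Lemma~\ref{hopfsmap}, reducing everything to quaternion arithmetic and the Gauss formula $\nabla_X Y = D_X Y + \langle X, Y\rangle p$ for the inclusion $\S^3\subset\mathbb{H}$. For part (1), I would first differentiate $\zeta = i\cdot p$: since left multiplication by $i$ is linear, $D_X\zeta = i\cdot X$, and the Gauss formula gives $\nabla_X\zeta = i\cdot X + \langle X,\zeta\rangle p$. On $\mathcal{H} = \span\{\alpha_1,\alpha_2\}$ one has $\langle X,\zeta\rangle = 0$, hence $\varphi(X) = -\nabla_X\zeta = -i\cdot X$; then $\varphi(\alpha_1) = -i\cdot(j\cdot p) = -\alpha_2$ and $\varphi(\alpha_2) = \alpha_1$, so $\varphi^2 = -\Id$ on $\mathcal{H}$ because $i^2 = -1$. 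For the vector-field equation I would record the connection data $\nabla_{\alpha_k}\alpha_k = 0$ (each $\alpha_k$ generates great circles) together with $\nabla_{\alpha_1}\zeta = \alpha_2$, $\nabla_{\alpha_2}\zeta = -\alpha_1$, $\nabla_{\alpha_3}\zeta = 0$; since the $\alpha_k$ are geodesic, the rough Laplacian collapses to $\Delta\zeta = \sum_k\nabla_{\alpha_k}\nabla_{\alpha_k}\zeta = \nabla_{\alpha_1}\alpha_2 - \nabla_{\alpha_2}\alpha_1 = -2\zeta$, while $|\nabla\zeta|^2 = 1+1+0 = 2$, giving $\Delta\zeta + |\nabla\zeta|^2\zeta = 0$.

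For part (2), harmonicity is immediate: by (1) the fibers of $\pi$ are the great circles tangent to $\zeta$, which are totally geodesic, so by Lemma~\ref{hopfsmap}(7) the map $f = \tfrac12\pi$ is a Riemannian submersion with minimal fibers and hence harmonic, and constant rescaling of the target preserves this. To obtain the Hessian I would view $\pi$ as an $\R^3$-valued map and use $B(\alpha_i,\alpha_j) = P\big(\alpha_i\alpha_j\pi - (\nabla_{\alpha_i}\alpha_j)\pi\big)$, where $P$ is the orthogonal projection onto $T_{\pi(p)}\S^2$; by homogeneity of the Hopf fibration it suffices to evaluate at $p = 1$, where the explicit formula of Lemma~\ref{hopfsmap}(1) and the connection coefficients from part (1) produce the stated matrices $B^4,B^5$, whence $|B|^2 = \sum_{c,i,j}(b^c_{ij})^2 = 8 + 8 = 16$. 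For $|\nabla B|^2$ I would first fix the pulled-back target connection by differentiating the defining relations $\nabla_{\alpha_k}(d\pi(\alpha_i)) = B(\alpha_k,\alpha_i) + d\pi(\nabla_{\alpha_k}\alpha_i)$, using $d\pi(\alpha_1) = 2\beta_4$, $d\pi(\alpha_2) = 2\beta_5$, $d\pi(\alpha_3) = 0$; then I would expand $(\nabla_{\alpha_k}B)(\alpha_i,\alpha_j) = \nabla_{\alpha_k}(B(\alpha_i,\alpha_j)) - B(\nabla_{\alpha_k}\alpha_i,\alpha_j) - B(\alpha_i,\nabla_{\alpha_k}\alpha_j)$ and, squaring and summing over $i,j,k$, obtain $|\nabla B|^2 = 96 = 6|B|^2$.

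For part (3), I would use the composition formulas for harmonic maps. A conformal self-map $g$ of $\S^2$ is holomorphic or antiholomorphic, hence harmonic, and the tension-field identity $\tau(h) = dg(\tau(\pi)) + \sum_k(\nabla dg)(d\pi(\alpha_k),d\pi(\alpha_k)) = 4\,\tau(g) = 0$ shows $h$ is harmonic; its fibers coincide with those of $\pi$, since $g$ is a diffeomorphism, and so are totally geodesic. The Hessian then follows from $B_h(X,Y) = dg(B_\pi(X,Y)) + (\nabla dg)(d\pi(X), d\pi(Y))$: inserting the matrices from (2), the relations $d\pi(\alpha_1) = 2\beta_4$, $d\pi(\alpha_2)=2\beta_5$, $d\pi(\alpha_3)=0$, and the form of $\nabla dg$ for a conformal map (trace-free and determined by $\beta_4(\sigma),\beta_5(\sigma)$) gives the displayed $B^4_h,B^5_h$, and a direct sum of squares yields $|B_h|^2 = 16\sigma^2 + 64|\nabla\sigma|^2 = 16(\sigma^2 + 4|\nabla\sigma|^2)$, which is \eqref{estbh1}. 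Finally, if $g$ is a conformal diffeomorphism that is not an isometry, then $\sigma$ is non-constant, and the area-preservation identity $\int_{\S^2}\sigma^2\,\omega_{\S^2} = \int_{\S^2}\omega_{\S^2}$ forces $\max_{\S^2}\sigma > 1$; at a maximum point of $\sigma$ one has $\nabla\sigma = 0$, so $|B_h|^2 = 16\sigma^2 > 16$ there, which is \eqref{estbh2}.

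I expect the genuine bookkeeping difficulty to lie in parts (2) and (3): one must track two connections simultaneously, namely the Levi-Civita connection of $\S^3$ in the quaternionic frame and the pulled-back connection of $\S^2$ in the singular-value frame $\{\beta_4,\beta_5\}$, the latter being determined only up to a rotation because $\pi$ is horizontally conformal. Pinning down the target connection form correctly is therefore the crux; once that is done, the evaluation of $|\nabla B|^2$ and of $B_h$ is routine, and the concluding inequality \eqref{estbh2} is an elementary maximum-principle argument.
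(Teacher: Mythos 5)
Parts (1) and (2) of your proposal are correct and essentially follow the paper's own route: the paper also works in the quaternionic frame for (1) (via the Weingarten formula and the complex structures $J_1,J_2,J_3$), and your verification $\Delta\zeta=-2\zeta$, $|\nabla\zeta|^2=2$ is exactly the ``straightforward computation'' the paper omits. For the Hessian in (2) the paper argues structurally rather than pointwise: Koszul's formula plus constancy of the singular values gives $B=0$ on $\mathcal{H}\times\mathcal{H}$, geodesic fibers give $B(\zeta,\zeta)=0$, and $B(\zeta,X)=d\pi(\varphi(X))$; it then differentiates these identities to get $|B|^2=16$ and $|\nabla B|^2=96$. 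Your alternative --- second derivatives of the explicit polynomial formula at $p=1$ plus right-invariance of the frame, and the observation that the pulled-back frame $\{\beta_4,\beta_5\}$ is parallel so that $\nabla B$ reduces to the connection-coefficient terms --- is sound and yields the same numbers. Part (3)'s harmonicity and the computation of $B_h$, $|B_h|^2$ via the composition formula coincide with the paper's argument (the paper's \eqref{Bcompose}).

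The one genuine flaw is that in part (3) you silently assume $g$ is a diffeomorphism. The lemma allows an arbitrary conformal map $g:\S^2\to\S^2$, i.e.\ (up to conjugation) any non-constant holomorphic map, which is in general a branched covering of degree $d$ with $|d|\ge 1$. Then the fibers of $h$ do not coincide with those of $\pi$ but are finite unions of them (still totally geodesic, so that conclusion survives), and --- more seriously --- your area-preservation identity $\int_{\S^2}\sigma^2\,\omega_{\S^2}=\int_{\S^2}\omega_{\S^2}$ is simply false when $|d|\ge 2$: the correct statement is $\int_{\S^2}\sigma^2\,\omega_{\S^2}=|d|\,\mathrm{vol}(\S^2)$, since $g^*\omega_{\S^2}=\pm\sigma^2\omega_{\S^2}$. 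Fortunately the fix strengthens your argument: for non-constant $g$ one has $|d|\ge 1$, so the average of $\sigma^2$ is at least $1$, hence $\max\sigma^2\ge 1$ with equality forcing $\sigma\equiv 1$, i.e.\ $g$ an isometry; at a maximum point of $\sigma$ one has $\nabla\sigma=0$, and \eqref{estbh1} gives $|B_h|^2=16\sigma^2>16$, which is \eqref{estbh2}. With this one-line correction your route is actually \emph{more elementary} than the paper's, which instead establishes $\min\sigma^2<1<\max\sigma^2$ by contradiction using the Bernstein-type rigidity theorem for length/area-decreasing minimal maps of \cite{savas4} (the graph of a weakly length-decreasing conformal map is minimal in $\S^2\times\S^2$, forcing $g$ to be constant or an isometry). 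Your degree-theoretic argument avoids that machinery entirely, at the cost of proving only the upper inequality --- which is all that is needed.
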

\begin{proof}
	(1) Since the integral curves of $\zeta$ are great circles it follows that
	$$\varphi(\zeta)=0.$$
	Moreover,
	from the Weingarten formula, we have that
	\begin{eqnarray*}
		\varphi(\alpha_1)=-\nabla_{\alpha_1}\zeta
		=-D_{\alpha_1}(J_1x)=-J_1D_{\alpha_1}x
		=-J_1\alpha_1=-J_1J_2x=-J_3x=-\alpha_2.
	\end{eqnarray*}
	Similarly,
	$$
	\varphi(\alpha_2)=\alpha_1.
	$$
	The formula for the Laplacian of $\zeta$ follows by straightforward computations.
	
	(2) By the Koszul formula and the fact that
	$\pi$ has constant singular values we deduce that
	\begin{equation}\label{B1}
		B(X,Y)=\nabla_{X}d\pi(Y)-d\pi(\nabla_{X}Y)=0, \quad\text{for all}\,\, X,Y\in\Gamma(\mathcal{H})
	\end{equation}
	for details see also \cite[Lemma 4.5.1, page 119]{baird1}.
	Hence, $B$ vanishes on the horizontal bundle. Since integral
	curves of $\zeta$ are geodesics and it belongs to the kernel of $d\pi$, we obtain that
	\begin{equation}\label{B4}
		B(\zeta,\zeta)=0.
	\end{equation}
	and
	\begin{equation}\label{B5}
		B(\zeta,X)=-d\pi(\nabla_{X}\zeta)=d\pi(\varphi(X)),\quad\text{for any}\,\, X\in\Gamma(\mathcal{H}).
	\end{equation}
	Differentiating \eqref{B1}, \eqref{B4} and \eqref{B5} we get the estimates for $|B|$ and
	$|\nabla B|$.
	
	(3) Let us denote by $\sigma$ the conformal factor of $g$.  By a straightforward computation, we get
	\begin{equation}\label{Bcompose}
		B_{h}(X,Y)=B_g(d\pi(X),d\pi(Y))+dg(B(X,Y)),
	\end{equation}
	for any vector fields $X$ and $Y$ of the sphere $\S^{3}$. Hence,
	$$
	B_h(\zeta,\zeta)=B_g(d\pi(\zeta),d\pi(\zeta))+dg(B(\zeta,\zeta))=0.
	$$
	Consider at $h(x)$ an orthonormal frame
	$\{{\tilde\beta_4},{\tilde\beta}_5\}$ such that
	$$
	dg(\beta_4)=\sigma{\tilde\beta}_4\quad\text{and}\quad dg(\beta_5)=\sigma{\tilde\beta}_5.
	$$
	Again by  Koszul's formula we get, away from the zero set of $\sigma$, that
	\begin{eqnarray*}
		B_g(X,Y)=X(\log\sigma)dg(Y)
		+Y(\log\sigma)dg(X)-\langle X,Y\rangle dg(\nabla \log\sigma),
	\end{eqnarray*}
	where $X$ and $Y$ are tangent vectors of $\S^2$. Hence,
	from \eqref{Bcompose}, \eqref{B1}, \eqref{B4} and \eqref{B5} we deduce that
	$$
	B_g(\alpha_3,\alpha_3)=0,\quad B_g(\alpha_1,\alpha_3)=2\sigma{\tilde\beta}_5\quad\text{and}\quad
	B_g(\alpha_2,\alpha_3)=-2\sigma{\tilde\beta}_4.
	$$
	Furthermore
	$$
	B_h(\alpha_1,\alpha_1)=4\beta_4(\sigma)\tilde\beta_4-4\beta_5(\sigma){\tilde\beta}_5
	=-B_h(\alpha_2,\alpha_2)
	$$
	and
	$$
	B_h(\alpha_1,\alpha_2)=4\beta_5(\sigma)\tilde\beta_4+4\beta_4(\sigma){\tilde\beta}_5.
	$$
	Assume now that $g$ is neither constant nor an isometry. We claim that
	$$
	\min\sigma^2<1<\max\sigma^2.
	$$
	Indeed, suppose to the contrary that $\max\sigma^2\le 1$, i.e. $g$ is a weakly
	length decreasing map. Because $g$ is
	conformal, its graph is a minimal submanifold
	of $\S^2\times\S^2$; see for example \cite{eells}. According to \cite[Theorem A]{savas4},
	$g$ must be constant or an isometry, a contradiction. Assume now that
	$\min\sigma^2\ge 1$. Then $g$ is a local diffeomorphism. Since $\S^2$ is
	simply connected, $g$ is a global diffeomorphism. Then $g^{-1}$ is a weakly
	length decreasing minimal diffeomorphism; see for example \cite{schoen2}. Again due to \cite[Theorem A]{savas4}, $g$ must be
	an isometry. Since $\nabla\sigma$ vanishes at maximum points of $\sigma$, it follows
	from \eqref{estbh1}
	that $|B_h|^2$ takes values above $16$.
	This completes the proof.
\end{proof}

\subsection{Harmonic unit vector fields on $\S^3$} In the previous section we proved that the
Hopf vector field $\zeta$ satisfies the equation
\begin{equation}\label{harmonicz}
\Delta\zeta+|\nabla\zeta|^2\zeta=0,
\end{equation}
where $\Delta$ is the rough Laplacian of $\zeta$. It turns out that \eqref{harmonicz} is the Euler-Lagrange equation of the {\em energy functional}
$$
E(\zeta)\doteq\frac{3}{2}{\rm vol}(\S^3)+\frac{1}{2}\int_{\S^3}|\nabla\zeta|^2d\mu
$$
in the space of unit vector fields on $\S^3$,
where $d\mu$ denotes the volume element and ${\rm vol}(\S^3)$ is the volume of $\S^3$;
see \cite{wiegmink,wood}.
Critical points of $E$ for variations by unit vector fields are called {\em harmonic
	unit vector fields}. It is conjectured that the Hopf vector fields
are the only harmonic unit vector fields. 
This conjecture remains open in general, but Fourtzis, Markellos, and Savas-Halilaj \cite[Theorem B \& C]{savas2} 
proved it under the additional assumption that $\zeta$ has totally geodesic integral leaves. To state the result, we first recall the following topological lemma.
\begin{lemma}\label{lem_topological}
	Let $\zeta$ be a unit vector field on $\S^3$ with compact integral leaves. Then, the quotient space obtained by identifying points on the same leaves of $\zeta$ is a smooth compact surface diffeomorphic to $\S^2$, and the quotient map $\pi : \S^3 \to \S^2$ is a smooth fiber bundle (in particular, a submersion).  
\end{lemma}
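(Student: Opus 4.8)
The plan is to recognize the fibers of $\zeta$ as the leaves of a one-dimensional foliation by circles, to upgrade this foliation to a smooth $S^1$-bundle, and finally to identify its base. First I would observe that, since $\S^3$ is compact and $\zeta$ is nowhere vanishing, its flow $\phi_t$ is complete; as $\zeta$ has no zeros, every compact integral curve is an embedded periodic orbit, i.e. a smoothly embedded circle. Thus $\zeta$ generates a one-dimensional foliation $\mathcal{F}$ of $\S^3$ whose leaves are circles, and, because $\zeta$ is unit, the minimal period of a leaf equals its length. By Epstein's theorem on periodic flows on compact $3$-manifolds, this period function is bounded and $\mathcal{F}$ is a Seifert fibration; in particular the leaf space $B \doteq \S^3/\mathcal{F}$ is a compact Hausdorff space and the quotient map $\pi : \S^3 \to B$ is continuous, open and proper.

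Next I would build the local model. Around a fixed leaf $L$ I would choose a small $2$-disk $\Sigma$ embedded transversally to $\zeta$ and meeting $L$ in a single point $p$. Flowing $\Sigma$ along $\zeta$ and recording the first return produces a Poincar\'e map $P : \Sigma' \to \Sigma$ on a smaller disk $\Sigma' \ni p$, a diffeomorphism fixing $p$; the leaves close to $L$ are exactly the $P$-orbits, so the germ of $B$ at $\pi(L)$ is the quotient of $\Sigma'$ by the finite cyclic holonomy group generated by $P$. If this holonomy is trivial, then $P = \Id$ near $p$, the saturation of $\Sigma'$ is diffeomorphic to $\Sigma' \times S^1$, and near $L$ the projection $\pi$ is a smooth $S^1$-bundle over the smooth chart $\Sigma'$.

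The main obstacle is precisely to rule out nontrivial holonomy, i.e. to exclude \emph{exceptional} leaves, so that $B$ is a genuine smooth surface and $\pi$ a genuine fiber bundle rather than merely a Seifert (orbifold) projection. This is the delicate point, since a general unit field with all orbits closed can a priori carry exceptional fibers, on which the period drops and nearby leaves wind around several times. Here one must exploit the extra structure available in the intended setting, namely that the integral curves are closed geodesics of the round metric, of locally constant length; this constancy forces the return map $P$ to be the identity, trivializing the holonomy. Concretely I would invoke the boundedness and local constancy of the leaf length together with Wadsley's theorem (or, in the totally geodesic case, the Gluck--Warner description of great-circle fibrations) to conclude that every holonomy group is trivial. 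Once this is done, the local trivializations patch to a smooth fiber bundle $S^1 \hookrightarrow \S^3 \xrightarrow{\pi} B$ over a compact smooth surface $B$.

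Finally I would identify $B$ using the long exact homotopy sequence of this fibration. From
\[
\pi_1(S^1) \longrightarrow \pi_1(\S^3) \longrightarrow \pi_1(B) \longrightarrow \pi_0(S^1) = 0
\]
and $\pi_1(\S^3) = 0$ we obtain $\pi_1(B) = 0$; a compact simply connected surface is diffeomorphic to $\S^2$, so $B \cong \S^2$. Hence the quotient is a smooth compact surface diffeomorphic to $\S^2$, and $\pi : \S^3 \to \S^2$ is a smooth $S^1$-fiber bundle, in particular a submersion, as claimed.
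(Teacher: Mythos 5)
Your proposal follows the same global skeleton as the paper's proof --- realize $\pi$ as a circle fibration over a smooth compact surface, then identify the base via the long exact homotopy sequence and the classification of compact simply connected surfaces --- but the middle step is genuinely different. The paper disposes of local triviality in one line: it asserts that compactness of the integral leaves makes the foliation \emph{regular} in the sense of Palais, and then quotes \cite[Corollary 5, p.~24]{palais} to obtain at once the $\S^1$-bundle structure and the smooth surface structure on the leaf space. You instead go through Epstein's theorem, the Seifert local model $\Sigma'/\langle P\rangle$, and an explicit discussion of why the holonomy of every leaf must be trivial.

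What you single out as ``the delicate point'' is exactly the content hidden in the paper's appeal to regularity, and your caution is warranted: compactness of the leaves alone does \emph{not} imply regularity. The unit field $\zeta = V/|V|$ with $V(z_1,z_2) = (ipz_1, iqz_2)$ on $\S^3\subset\C^2$, $p \neq q$ coprime positive integers, has all integral curves closed, yet its leaf space is the weighted projective orbifold with two cone points and the quotient map fails to be locally trivial near the two exceptional orbits. So the implication ``compact leaves $\Rightarrow$ regular'' cannot be used as stated, and ruling out exceptional fibers requires the extra geometric input you invoke: in the paper's actual applications (Theorem \ref{fms} and Lemma \ref{extendz}) the leaves are great circles, all of length $2\pi$, which forces the first-return map to be the identity --- equivalently, the arclength flow of $\zeta$ is $2\pi$-periodic and free, so $\pi$ is a principal $\S^1$-bundle without any appeal to Wadsley's theorem. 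Strictly speaking, then, your argument (like any correct argument along these lines) proves the lemma under an additional hypothesis on the leaves that is absent from the statement but present wherever the lemma is used. In summary: the paper's route is shorter but buries the exceptional-fiber issue inside the citation of \cite{palais}; yours is longer and imports heavier machinery (Epstein, Wadsley, Gluck--Warner), but it localizes precisely what must be checked and why it holds for great-circle fibrations, which is the more faithful account of the mathematics.
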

	\begin{proof}
	Let us denote by $\mathbb{M}^2$ the space of leaves of the foliation generated by the integral curves of $\zeta$, and denote by $\pi:\S^3\to\mathbb{M}^2$ the corresponding quotient map, i.e. the map which identifies points of the same integral curve of $\zeta$. Note that $\mathbb{M}^2 = \pi(\S^3)$ with the quotient topology is compact and connected. Since $\zeta$ has compact  leaves, the distribution it generates is regular in the sense of \cite[Chapter 1]{palais}, thus by \cite[Corollary 5 p.24]{palais} the triple
	$$
	\mathbb{S}^1\hookrightarrow \S^3\overset{\pi}{\longrightarrow}\mathbb{M}^2
	$$
	is $\S^1$ fiber bundle and $\mathbb{M}^2$ is a smooth $2$-dimensional manifold. Moreover, by the long exact homotopy sequence of a fiber bundle (see \cite[Theorem 4.41 and Proposition 4.48]{hatcher})
	\[
	\ldots \to \pi_1(\S^3) \to \pi_1(\mathbb{M}^2) \to \pi_0(\S^1) \to \pi_0(\S^3) \to 0
	\]
	we deduce that $\mathbb{M}^2$ is simply connected.
Therefore, $\mathbb{M}^2$ is diffeomorphic to the sphere $\S^2$.
	\end{proof}

\begin{theorem}[\cite{savas2}]\label{fms}
	Let $\zeta\in\mathfrak{X}(\S^3) $ be a unit vector field whose integral curves are
	great circles and let
	$\pi : \S^3 \to \S^2$ be the corresponding quotient map. Equip the spheres with their standard round metrics. Then:
	\begin{enumerate}[\rm(1)]
		\item The map $\pi$ is harmonic if and only if $\zeta$
		is a harmonic unit vector field.
		\medskip
		\item
		If $\zeta$ is globally defined, then it
		coincides with a Hopf vector field and the corresponding quotient map $\pi: \S^3\to\S^2$ is a Hopf fibration.
	\end{enumerate}
\end{theorem}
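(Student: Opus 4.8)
The plan is to prove (1) in full generality and then deduce (2) from it together with the rigidity of Riemannian submersions. Throughout, observe first that on the round $\S^3$ every complete geodesic is a great circle, so the hypothesis is equivalent to $\zeta$ being a unit \emph{geodesic} field, $\nabla_\zeta\zeta=0$ and $|\zeta|=1$. Two remarks set up (1). Since $\zeta$ spans $\mathcal V=\ker d\pi$ and $\nabla_\zeta\zeta=0$, one gets $B(\zeta,\zeta)=0$, so in the adapted frame $\{\alpha_1,\alpha_2,\zeta\}$ the tension field reduces to $\tau(\pi)=B(\alpha_1,\alpha_1)+B(\alpha_2,\alpha_2)$; and since $\pi$ is a submersion, $d\pi|_{\mathcal H}\colon\mathcal H\to T\S^2$ is a linear isomorphism, so a horizontal field vanishes iff its image under $d\pi$ does. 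Moreover $|\zeta|=1$ forces $\langle\nabla_X\zeta,\zeta\rangle=0$ and hence the purely algebraic identity $\langle\Delta\zeta,\zeta\rangle=-|\nabla\zeta|^2$; consequently the harmonic unit vector field equation $\Delta\zeta+|\nabla\zeta|^2\zeta=0$ is equivalent to the vanishing of the horizontal part $(\Delta\zeta)^{\mathcal H}$, i.e. to $d\pi(\Delta\zeta)=0$.

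The computational heart of (1) is a pointwise identity linking $d\pi(\Delta\zeta)$ and $\tau(\pi)$. I would start from $B(X,\zeta)=-d\pi(\nabla_X\zeta)$, differentiate covariantly in the pullback bundle, trace over the frame, and use the Codazzi equation \eqref{codazzi} to trade the resulting divergence of $B$ for a derivative of $\tau(\pi)$; the remaining zeroth–order contributions pair the symmetric tensor $B$ against $\nabla\zeta|_{\mathcal H}$ and are controlled by the same Codazzi relations. This somewhat involved but routine computation yields the equivalence $\tau(\pi)=0\iff\Delta\zeta+|\nabla\zeta|^2\zeta=0$, which is exactly (1) once one recalls that $d\pi|_{\mathcal H}$ is injective.

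For (2) assume in addition what the conclusion requires, namely that $\zeta$ is a harmonic unit vector field (equivalently, by (1), that $\pi\colon\S^3\to\S^2$ is a harmonic map); by Lemma \ref{lem_topological} the leaf space is diffeomorphic to $\S^2$ and $\pi$ is a smooth submersion with totally geodesic great–circle fibers. The aim is to upgrade this to a \emph{Riemannian submersion up to scale}, that is, to show that $\pi$ is horizontally conformal, $\lambda\equiv\mu$, with constant dilation. Once this is established, Escobales' theorem \cite{escobales} that the Hopf fibration is the unique Riemannian submersion $\S^3\to\S^2(1/2)$ up to isometry identifies $\pi$ with a Hopf fibration and its associated geodesic field with a Hopf vector field $p\mapsto Jp$. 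To prove horizontal conformality I would argue by a maximum principle on the singular values, in the spirit of the other theorems of the paper: consider $w=(\lambda-\mu)^2\ge 0$, which measures the failure of $\pi$ to be weakly conformal, and using harmonicity, the Codazzi equation, the constraint $\nabla_\zeta\zeta=0$, and $\operatorname{Ric}_{\S^3}=2\,g$, derive an elliptic differential inequality for $w$ whose zeroth–order terms have a definite sign; compactness of $\S^3$ then forces $w\equiv 0$, and a complementary computation shows the common value is constant. Equivalently, one shows that $\varphi=-\nabla{\cdot}\,\zeta$, which annihilates $\zeta$ and preserves $\mathcal H$, is forced by the harmonic equation to be a parallel orthogonal complex structure of constant norm on $\mathcal H$, so that $\zeta=Jp$ is Killing.

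The main obstacle is precisely this rigidity step: unlike the minimal–hypersurface setting, one cannot differentiate $B$ (for the Hopf fibration $|\nabla B|^2=96\neq 0$ and the refined Kato inequality degenerates), so the inequality must be manufactured from the singular values themselves, and getting its curvature terms to have the right sign using only $\operatorname{Ric}_{\S^3}=2\,g$ and the totally geodesic condition — and then promoting horizontal conformality to \emph{constant} dilation — is the delicate point. A more topological alternative would replace the maximum principle by the Gluck–Warner description of great–circle fibrations of $\S^3$ as distance–nonincreasing classifying maps between the two $\S^2$–factors of $\widetilde{G}_2(\R^4)\cong\S^2\times\S^2$, and then show that harmonicity of $\zeta$ forces this classifying map to be constant, the constant maps being exactly the Hopf fibrations.
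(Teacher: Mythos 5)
First, a point of orientation: the paper contains no proof of Theorem \ref{fms} to compare against --- the result is imported verbatim from \cite{savas2}, where it is proved through the Gauss map of the fibration into the Grassmannian $\widetilde{G}_{2}(\R^4)\cong\S^2\times\S^2$, the Gluck--Warner structure theory of great circle fibrations \cite{gluck3}, and rigidity results for the resulting distance-decreasing graphs in the spirit of \cite{savas4}. That route is precisely the ``more topological alternative'' you mention in a single closing sentence; your primary plan is different, and it has a genuine gap at the step you yourself call delicate. Concerning part (2) (where you correctly supply the missing harmonicity hypothesis --- without it the statement is false, since Gluck and Warner produce an infinite-dimensional family of non-Hopf great circle fibrations), your maximum-principle scheme on $w=(\lambda-\mu)^2$ cannot be executed as designed, for two reasons. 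First, the singular values of $\pi$ and the Bochner formulas of Lemmas \ref{7.2}--\ref{lapw} presuppose that the target is the round, curvature-one $\S^2$; the leaf space of an abstract great circle fibration carries no canonical round metric (Lemma \ref{lem_topological} gives only a smooth structure), and fixing an identification under which those formulas are valid is tantamount to already knowing that the fibration is Riemannian, i.e.\ to the conclusion. Second, even granting a round target and a harmonic submersion with totally geodesic fibers, the zeroth-order terms of Lemma \ref{lapw} are not sign-definite: the contribution $-\frac{\mu}{\lambda}\sum_{k}(b_{3k}^{4})^2-\frac{\lambda}{\mu}\sum_{k}(b_{3k}^{5})^2$ is unbounded below as $\mu\to 0$, which is exactly why the paper's Theorem \ref{thmC}, working with the same formula under the same totally-geodesic-fibers assumption, still needs the pinching $|B|^2\le 2\operatorname{D}_2(\operatorname{D}_2+2)$. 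If an unconditional maximum principle on $w$ closed, Theorem \ref{thmC} would be hypothesis-free and the authors would have had no reason to quote \cite{savas2}. Flagging this as ``the delicate point'' does not fill it; it is the entire content of the theorem.

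Part (1) is likewise asserted rather than proved. Tracing $B(X,\zeta)=-d\pi(\nabla_X\zeta)$ and using the Codazzi equation \eqref{codazzi} produces, as in the proof of Lemma \ref{lem_geofibers}, an identity of the shape $d\pi(\Delta\zeta)=\pm\nabla_\zeta\tau(\pi)\pm 2\sum_{k}\big(\omega_{13}(\alpha_k)B_{1k}+\omega_{23}(\alpha_k)B_{2k}\big)$. The second sum is \emph{not} ``controlled by the same Codazzi relations'': in the weakly conformal case it cancels because of \eqref{tanos1}, i.e.\ because of conformality via Lemma \ref{7.1} with $\lambda\equiv\mu$, and for a general great circle fibration it survives. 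Consequently, harmonicity of $\pi$ does not yield $d\pi(\Delta\zeta)=0$ merely from injectivity of $d\pi|_{\mathcal H}$; and conversely, harmonicity of $\zeta$ yields only a first-order relation involving $\nabla_\zeta\tau(\pi)$ along the fibers, from which $\tau(\pi)=0$ requires an additional argument (say, exploiting that the fibers are closed circles). The same base-metric ambiguity noted above already affects what ``$\pi$ is harmonic'' means in (1). In summary, your outline gets the soft reductions right (the identity $\langle\Delta\zeta,\zeta\rangle=-|\nabla\zeta|^2$, and the reduction of (2) to horizontal conformality with constant dilation plus Escobales \cite{escobales}), but both computations that would constitute the actual proof --- the equivalence in (1) and the conformality in (2) --- are left open, and the second, as designed, runs into obstructions that the paper's own pinching theorems make explicit.
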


\section{Bochner-Weitzenb\"ock formulas}
In the sequel we will compute the gradients and Laplacians of the functions $u$, $v$ and $w$ defined in the
previous section, in the case where
$f$ is a harmonic map.

\begin{lemma}\label{7.1}
Let $f: U\subset\S^3 \to \S^2$ be a submersion and $k \in \{ 1,2,3 \}$.
On the open and dense subset of $U$ where $\lambda$ and $\mu$ are smooth we have that
\begin{equation*}\label{eq1}
  \alpha_{k}(\lambda) = b_{1k}^{4}\quad \text{and} \quad \alpha_{k}(\mu) = b_{2k}^{5}.
\end{equation*}
Moreover,
\begin{equation*}\label{eq2}
  \lambda\omega_{13}(\alpha_{k}) = b_{3k}^{4}\quad \text{and} \quad \mu\omega_{23}(\alpha_{k}) = b_{3k}^{5}
\end{equation*}
and
\begin{equation*}
  (\lambda^2 - \mu^2)\omega_{12}(\alpha_{k}) = \lambda b_{2k}^{4} + \mu b_{1k}^{5},
\end{equation*}
where the indices are with respect to the frames arising from the singular value decomposition.
\end{lemma}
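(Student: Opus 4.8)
The plan is to extract all three groups of identities from a single source: differentiating the three defining relations of the singular value decomposition,
\[
df(\alpha_1)=\lambda\beta_4,\qquad df(\alpha_2)=\mu\beta_5,\qquad df(\alpha_3)=0,
\]
along each frame vector $\alpha_k$ and comparing components in the orthonormal frame $\{\beta_4,\beta_5\}$ of $f^*T\S^2$. On the open and dense subset where $\lambda$ and $\mu$ are smooth, the eigendistributions of $df^*df$ and the kernel $\mathcal V$ have locally constant rank, so one may choose smooth adapted orthonormal frames $\{\alpha_1,\alpha_2,\alpha_3\}$ and $\{\beta_4,\beta_5\}$. Since $\S^2$ is two-dimensional and $\{\beta_4,\beta_5\}$ is orthonormal, the pullback connection is encoded by a single $1$-form $\rho$ with $\nabla_{\alpha_k}\beta_4=\rho(\alpha_k)\beta_5$ and $\nabla_{\alpha_k}\beta_5=-\rho(\alpha_k)\beta_4$.

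First I would differentiate $df(\alpha_1)=\lambda\beta_4$. Using the definition of the Hessian in the form $\nabla_{\alpha_k}df(\alpha_1)=B(\alpha_k,\alpha_1)+df(\nabla_{\alpha_k}\alpha_1)$, together with $\nabla_{\alpha_k}\alpha_1=\omega_{12}(\alpha_k)\alpha_2+\omega_{13}(\alpha_k)\alpha_3$ and the relations above, gives
\[
\alpha_k(\lambda)\beta_4+\lambda\rho(\alpha_k)\beta_5=b_{1k}^{4}\beta_4+b_{1k}^{5}\beta_5+\mu\omega_{12}(\alpha_k)\beta_5 .
\]
The $\beta_4$-component is exactly $\alpha_k(\lambda)=b_{1k}^{4}$, while the $\beta_5$-component records the auxiliary relation $\lambda\rho(\alpha_k)=b_{1k}^{5}+\mu\omega_{12}(\alpha_k)$. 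Differentiating $df(\alpha_2)=\mu\beta_5$ in the same way, now using $\nabla_{\alpha_k}\alpha_2=-\omega_{12}(\alpha_k)\alpha_1+\omega_{23}(\alpha_k)\alpha_3$, yields $\alpha_k(\mu)=b_{2k}^{5}$ from the $\beta_5$-component and the second auxiliary relation $\mu\rho(\alpha_k)=\lambda\omega_{12}(\alpha_k)-b_{2k}^{4}$ from the $\beta_4$-component. This establishes the first pair of identities and leaves two equations containing the unknown coefficient $\rho(\alpha_k)$.

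To obtain the third identity I would eliminate $\rho(\alpha_k)$: multiplying the first auxiliary relation by $\mu$, the second by $\lambda$, and subtracting makes the common term $\lambda\mu\rho(\alpha_k)$ cancel, leaving $(\lambda^2-\mu^2)\omega_{12}(\alpha_k)=\lambda b_{2k}^{4}+\mu b_{1k}^{5}$. Finally, differentiating the kernel relation $df(\alpha_3)=0$ and inserting $\nabla_{\alpha_k}\alpha_3=-\omega_{13}(\alpha_k)\alpha_1-\omega_{23}(\alpha_k)\alpha_2$ produces
\[
0=b_{3k}^{4}\beta_4+b_{3k}^{5}\beta_5-\lambda\omega_{13}(\alpha_k)\beta_4-\mu\omega_{23}(\alpha_k)\beta_5,
\]
whose two components are precisely the remaining identities $\lambda\omega_{13}(\alpha_k)=b_{3k}^{4}$ and $\mu\omega_{23}(\alpha_k)=b_{3k}^{5}$.

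The computation itself is short; the only delicate points, rather than a genuine obstacle, are the preliminary reduction to smooth adapted frames on the set where $\lambda$ and $\mu$ are smooth, and the careful bookkeeping of the antisymmetry $\omega_{ij}(\alpha_k)=-\omega_{ji}(\alpha_k)$ and the symmetry $b_{ij}^{c}=b_{ji}^{c}$. The conceptual device that makes everything fall out cleanly is the introduction of the single target connection form $\rho$, whose elimination is exactly what converts the two \emph{off-diagonal} leftover relations into the desired identity for $\omega_{12}(\alpha_k)$.
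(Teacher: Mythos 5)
Your proof is correct and follows essentially the same route as the paper: both differentiate the singular value decomposition relations along the frame and use the definition of the Hessian, the only cosmetic difference being that the paper differentiates the scalar quantities $|df(\alpha_1)|^2$, $|df(\alpha_2)|^2$ and $\langle df(\alpha_1),df(\alpha_2)\rangle$ (so the target connection form never appears, by metric compatibility), whereas you differentiate the vector equations $df(\alpha_i)=\lambda_i\beta_{i+3}$ and then eliminate the connection form $\rho$ --- an elimination that is mathematically identical to the paper's differentiation of the orthogonality relation.
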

\begin{proof}
Differentiating the equation $\lambda^2 = |d f(\alpha_{1})|^2$
with respect to $\alpha_k$ we get
$$
\lambda \alpha_{k}(\lambda)=\langle B(\alpha_{k}, \alpha_{1})+df(\nabla_{\alpha_{k}}\alpha_{1}), d f(\alpha_{1})\rangle
=\langle B(\alpha_{k}, \alpha_{1}), d f(\alpha_{1})\rangle=\lambda\, b^4_{1k}.
$$
In a similar manner, by differentiating the equation
$
\mu^2 = |d f(\alpha_{2})|^2,
$
we obtain that
$$\alpha_{k}(\mu) = b_{2k}^{5}.$$
On the other hand, because $\alpha_3$ is in the kernel of $df$, we get that
\begin{equation*}
  B(\alpha_{k},\alpha_3) = -df(\nabla_{\alpha_{k}}\alpha_3) = \lambda\omega_{13}(\alpha_{k})\beta_4 + \mu\omega_{23}(\alpha_{k})\beta_5.
\end{equation*}

Differentiating
$$\langle df(\alpha_1),df(\alpha_2)\rangle=0$$
with respect to $\alpha_k$ we get the expression for $\omega_{12}(\alpha_k)$.
This concludes the proof.
\end{proof}

\begin{lemma}\label{7.2}
  Let $f : U \subset \S^{3} \to \S^{2}$ be a harmonic map. On the open set where $\lambda$ is different from $\mu$, we have that $\lambda^2$ and $\mu^2$ are smooth functions, and their Laplacians are given by
  \[ \frac{1}{2}\Delta(\lambda^2) = (2-\mu^2)\lambda^2 + \sum_{k=1}^{3} \frac{(\lambda b_{2k}^{4} + \mu b_{1k}^{5})^2}{\lambda^2 - \mu^2} + \sum_{k=1}^{3} |B_{1k}|^2 + \sum_{k=1}^{3} (b_{3k}^{4})^2, \]
  and
  \[ \frac{1}{2}\Delta(\mu^2) = (2-\lambda^2)\mu^2 - \sum_{k=1}^{3} \frac{(\lambda b_{2k}^{4} + \mu b_{1k}^{5})^2}{\lambda^2 -\mu^2} + \sum_{k=1}^{3} |B_{2k}|^2 + \sum_{k=1}^{3} (b_{3k}^{5})^2,
  \]
where the indices are with respect to the frames arising from the singular value decomposition.
\end{lemma}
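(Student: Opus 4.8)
The plan is to regard $\lambda^{2}$ and $\mu^{2}$ as honest smooth functions on the open set $\{\lambda>\mu\}$ and to compute $\tfrac12\Delta(\lambda^{2})$ as the trace of its Hessian, feeding in the first-order identities of Lemma~\ref{7.1}, the Codazzi equation~\eqref{codazzi}, and harmonicity. Smoothness itself is immediate: on $\{\lambda>\mu>0\}$ the eigendistributions of $df^{*}df$ have locally constant dimension, so $\lambda^{2}$ and $\mu^{2}$ are smooth by \cite{nomizu}. By Lemma~\ref{7.1} the differential is $\tfrac12 d(\lambda^{2})(\alpha_k)=\lambda\,\alpha_k(\lambda)=\langle B(\alpha_1,\alpha_k),df(\alpha_1)\rangle$. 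Writing the Hessian as $(\nabla^{2}\phi)(X,Y)=X\big(d\phi(Y)\big)-d\phi(\nabla_XY)$ and expanding $\nabla_{\alpha_k}\big(B(\alpha_1,\alpha_k)\big)$ by the product rule, the term $B(\alpha_1,\nabla_{\alpha_k}\alpha_k)$ cancels against the connection correction in the Hessian, so that summing over $k$ leaves three pieces:
\[
\tfrac12\Delta(\lambda^{2})=\sum_k\langle(\nabla_{\alpha_k}B)(\alpha_1,\alpha_k),df(\alpha_1)\rangle+\sum_k\langle B(\nabla_{\alpha_k}\alpha_1,\alpha_k),df(\alpha_1)\rangle+\sum_k\langle B(\alpha_1,\alpha_k),\nabla_{\alpha_k}(df(\alpha_1))\rangle.
\]

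For the first, zeroth-order piece I would apply the Codazzi equation~\eqref{codazzi} with $(X,Y,Z)=(\alpha_k,\alpha_k,\alpha_1)$ to trade $(\nabla_{\alpha_k}B)(\alpha_k,\alpha_1)$ for $(\nabla_{\alpha_1}B)(\alpha_k,\alpha_k)$ plus the curvature-type right-hand side. Summing over $k$, the term $\sum_k(\nabla_{\alpha_1}B)(\alpha_k,\alpha_k)$ equals $\nabla_{\alpha_1}(\operatorname{tr}B)$ up to $\sum_k B(\nabla_{\alpha_1}\alpha_k,\alpha_k)$; the former vanishes by harmonicity ($\operatorname{tr}B=\tau(f)=0$) and the latter vanishes because $\omega_{jk}(\alpha_1)$ is antisymmetric in $j,k$ while $B$ is symmetric. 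Using $\langle df(\alpha_i),df(\alpha_j)\rangle=\lambda^{2}\delta_{i1}\delta_{j1}+\mu^{2}\delta_{i2}\delta_{j2}$, the surviving right-hand side collapses to $(2-\mu^{2})\lambda\,\beta_4$, whose inner product with $df(\alpha_1)=\lambda\beta_4$ yields the first summand $(2-\mu^{2})\lambda^{2}$.

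The remaining two pieces are pure bookkeeping in the connection coefficients. Expanding $\nabla_{\alpha_k}(df(\alpha_1))=B(\alpha_k,\alpha_1)+df(\nabla_{\alpha_k}\alpha_1)$ produces $\sum_k|B_{1k}|^{2}$ directly, together with a term $\sum_k\omega_{12}(\alpha_k)\,\mu\,b_{1k}^{5}$ (using $df(\alpha_3)=0$). Likewise $B(\nabla_{\alpha_k}\alpha_1,\alpha_k)$ contributes $\sum_k[\omega_{12}(\alpha_k)\lambda b_{2k}^{4}+\omega_{13}(\alpha_k)\lambda b_{3k}^{4}]$. Now I substitute the last two identities of Lemma~\ref{7.1}: the relation $\lambda\omega_{13}(\alpha_k)=b_{3k}^{4}$ turns the $\omega_{13}$ term into $\sum_k(b_{3k}^{4})^{2}$, while $(\lambda^{2}-\mu^{2})\omega_{12}(\alpha_k)=\lambda b_{2k}^{4}+\mu b_{1k}^{5}$ turns the combined $\omega_{12}$ terms into $\sum_k(\lambda b_{2k}^{4}+\mu b_{1k}^{5})^{2}/(\lambda^{2}-\mu^{2})$. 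Assembling the three pieces gives the stated formula for $\tfrac12\Delta(\lambda^{2})$. The formula for $\mu^{2}$ follows verbatim after exchanging the indices $1\leftrightarrow2$ and $4\leftrightarrow5$; the sign reversal in the cross term is forced by $\omega_{21}=-\omega_{12}$, and the zeroth-order coefficient becomes $(2-\lambda^{2})\mu^{2}$.

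I expect the main obstacle to be organizational rather than conceptual: one must keep scrupulous track of all frame-derivative terms $\omega_{ij}(\alpha_k)$ and verify that exactly the right combinations reassemble, so that nothing is double-counted between the second and third pieces. The single genuinely structural input is the vanishing of $\sum_k(\nabla_{\alpha_1}B)(\alpha_k,\alpha_k)$, which is where harmonicity enters and which is precisely what produces the clean coefficient $2-\mu^{2}$ instead of a term still carrying derivatives of $B$.
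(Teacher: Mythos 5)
Your proposal is correct and takes essentially the same route as the paper: compute $\tfrac12\Delta(\lambda^2)$ as the trace of the Hessian of $\langle df(\alpha_1),df(\alpha_1)\rangle$, handle the $\nabla B$ term via the Codazzi equation together with harmonicity (giving the coefficient $(2-\mu^2)\lambda^2$), and use Lemma \ref{7.1} to convert the connection coefficients $\omega_{12},\omega_{13}$ into the quotient term and $\sum_k (b_{3k}^4)^2$. The only minor difference is the smoothness argument: your eigendistribution reasoning covers $\{\lambda>\mu>0\}$, whereas the paper gets smoothness of $\mu^2$ on all of $\{\lambda>\mu\}$ (including points where $\mu=0$) by noting that $\lambda^2$ is a simple eigenvalue there and writing $\mu^2=u-\lambda^2$ with $u=|df|^2$ globally smooth.
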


\begin{proof}
On the open set $V$ where $\lambda>\mu$ the multiplicity of $\lambda^2$ does not change. Thus
$\lambda^2$ is smooth \cite{nomizu}. Since $u$ is smooth on $\S^3$ it follows that $\mu^2$ is smooth
on $V$ as well.
  We have
  \begin{equation*}
    \begin{aligned}
      \frac{1}{2}\Delta(\lambda^2) &=\frac{1}{2} \sum_{k=1}^{3} (\nabla_{\alpha_{k}}\nabla_{\alpha_{k}} - \nabla_{\nabla_{\alpha_{k}}\alpha_{k}})\langle df(\alpha_1),df(\alpha_1) \rangle \\
      &= \sum_{k=1}^{3} \langle \nabla_{\alpha_{k}}B(\alpha_1,\alpha_{k}), df(\alpha_1) \rangle + \sum_{k=1}^{3} \langle B(\alpha_1,\alpha_{k}), B(\alpha_1,\alpha_{k}) + df(\nabla_{\alpha_{k}}\alpha_1) \rangle \\
      &\quad - \sum_{k=1}^{3} \langle B(\alpha_1,\nabla_{\alpha_k}\alpha_{k}),df(\alpha_1) \rangle \\
      &= \sum_{k=1}^{3} \langle (\nabla_{\alpha_{k}}B)(\alpha_1,\alpha_{k}) + B(\nabla_{\alpha_{k}}\alpha_1,\alpha_{k}),df(\alpha_1) \rangle + \sum_{k=1}^{3} \big( |B_{1k}|^2 + \omega_{12}(\alpha_{k})\mu b_{1k}^{5}\big).
    \end{aligned}
  \end{equation*}
  By the Codazzi equation \eqref{codazzi} and Lemma \ref{7.1} we get that
  \begin{equation*}
    \begin{aligned}
      \frac{1}{2}\Delta(\lambda^2) &= (2-\mu^2)\lambda^2 + \sum_{k=1}^{3} \frac{(\lambda b_{2k}^{4} + \mu b_{1k}^{5})^2}{\lambda^2 - \mu^2} + \sum_{k=1}^{3} |B_{1k}|^2 + \sum_{k=1}^{3} (b_{3k}^{4})^2.
    \end{aligned}
  \end{equation*}
In a similar manner
  \begin{equation*}
    \begin{aligned}
      \frac{1}{2}\Delta(\mu^2)
      &= \sum_{k=1}^{3} \langle \nabla_{\alpha_{k}}B(\alpha_2,\alpha_{k}), df(\alpha_2) \rangle + \sum_{k=1}^{3} \langle B(\alpha_2,\alpha_{k}), B(\alpha_2,\alpha_{k}) + df(\nabla_{\alpha_{k}}\alpha_2) \rangle \\
      &\quad - \sum_{k=1}^{3} \langle B(\alpha_2,\nabla_{\alpha_k}\alpha_{k}),df(\alpha_2) \rangle \\
      &= \sum_{k=1}^{3} \langle (\nabla_{\alpha_{K}}B)(\alpha_2,\alpha_{k}) + B(\nabla_{\alpha_{k}}\alpha_2,\alpha_{k}),df(\alpha_2) \rangle + \sum_{k=1}^{3} \big( |B_{2k}|^2 + \omega_{21}(\alpha_{k})\lambda b_{2k}^{4}) \big) \\
      &= (2-\lambda^2)\mu^2 - \sum_{k=1}^{3} \frac{(\lambda b_{2k}^{4} + \mu b_{1k}^{5})^2}{\lambda^2 -\mu^2} + \sum_{k=1}^{3} |B_{2k}|^2 + \sum_{k=1}^{3} (b_{3k}^{5})^2,
    \end{aligned}
  \end{equation*}
  which completes the proof.
\end{proof}

Making use of Lemmata \ref{7.1} and \ref{7.2}, we obtain the following by direct computations.

\begin{lemma}\label{lapu}
Let $f: U\subset\S^{3} \to \S^{2}$ be a harmonic map. The gradient and the Laplacian of the function $u$
defined in \eqref{energy}
are given by
\begin{equation}\label{simera}
  \nabla u = 2\sum_{k=1}^{3} (\lambda b_{1k}^{4} + \mu b_{2k}^{5}) \alpha_{k}
\end{equation}
where the indices are with respect to the frames of the singular value decomposition. Moreover
$$
\Delta u =2|B|^2+4(u-v^2),
$$
where $v$ is the function given in \eqref{vdef}.
In particular $v^2$ is a real analytic function on $U$.
\end{lemma}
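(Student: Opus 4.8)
The plan is to read off all three assertions from the representation $u=\lambda^2+\mu^2$ in \eqref{energy} together with the identities of Lemmata \ref{7.1} and \ref{7.2}. I would work on the open subset $V\subset U$ where $\lambda>\mu$, on which the singular-value frame $\{\alpha_1,\alpha_2,\alpha_3;\beta_4,\beta_5\}$ and the functions $\lambda,\mu$ are smooth; since $u$, $|B|^2$ and $v^2=(\lambda\mu)^2$ are intrinsic (frame-independent) continuous functions, the final formula for $\Delta u$ will extend from $V$ to all of $U$ by density.

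For the gradient I expand $\nabla u=\sum_{k=1}^3\alpha_k(u)\,\alpha_k$ and differentiate $u=\lambda^2+\mu^2$, getting $\alpha_k(u)=2\lambda\,\alpha_k(\lambda)+2\mu\,\alpha_k(\mu)$. Substituting $\alpha_k(\lambda)=b_{1k}^4$ and $\alpha_k(\mu)=b_{2k}^5$ from Lemma \ref{7.1} gives \eqref{simera} immediately.

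For the Laplacian I add the two expressions for $\tfrac12\Delta(\lambda^2)$ and $\tfrac12\Delta(\mu^2)$ from Lemma \ref{7.2}. The crucial feature is that the two mixed terms $\pm\sum_k(\lambda b_{2k}^4+\mu b_{1k}^5)^2/(\lambda^2-\mu^2)$, which individually are singular as $\lambda\to\mu$, cancel exactly in the sum; this is precisely why $\Delta u$ stays regular and extends past $\{\lambda=\mu\}$. What remains I simplify using two observations: the zeroth-order part is
$$(2-\mu^2)\lambda^2+(2-\lambda^2)\mu^2=2(\lambda^2+\mu^2)-2\lambda^2\mu^2=2u-2v^2,$$
by \eqref{vdef}, while the remaining quadratic terms regroup, via $(b_{3k}^4)^2+(b_{3k}^5)^2=|B_{3k}|^2$, into $\sum_{i,k=1}^3|B_{ik}|^2=|B|^2$. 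This yields $\tfrac12\Delta u=2u-2v^2+|B|^2$, that is $\Delta u=2|B|^2+4(u-v^2)$.

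Finally, I obtain the real-analyticity of $v^2$ by solving this identity: $v^2=u+\tfrac12|B|^2-\tfrac14\Delta u$. A harmonic map is real-analytic, so $df$ and $B=\nabla df$ are real-analytic, whence $u=|df|^2$, $|B|^2$ and $\Delta u$ are real-analytic on $U$ and therefore so is $v^2$. I expect this to be the only genuinely non-formal step, and the actual point of the assertion: $v$ is a priori merely continuous, being built from singular values that need not be differentiable across $\{\lambda=\mu\}$, and the content is that the product $\lambda^2\mu^2$ is nonetheless analytic there. Everything else is bookkeeping; should one worry about a map that is horizontally conformal on an open set (so $V=\varnothing$), the same formula follows from a direct Bochner--Weitzenb\"ock computation for $|df|^2$ using the Codazzi equation \eqref{codazzi}, the curvatures of $\S^3$ and $\S^2$, and $\tau(f)=0$.
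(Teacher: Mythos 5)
Your proposal is correct and follows essentially the same route as the paper, which gives no written proof beyond the remark that the lemma follows ``by direct computations'' from Lemmata \ref{7.1} and \ref{7.2}: the gradient comes from Lemma \ref{7.1}, the Laplacian from summing the two formulas of Lemma \ref{7.2} (with the singular terms cancelling), and the analyticity of $v^2$ from solving the resulting identity, exactly as you do. Your only imprecision is the parenthetical ``(so $V=\varnothing$)'' --- the real concern is $V$ failing to be dense in $U$, not being empty --- but your Bochner--Weitzenb\"ock fallback is local and applies verbatim on any open subset of $\{\lambda=\mu\}$, so the argument patches immediately.
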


\begin{lemma}\label{lapv}
Let $f: U\subset\S^{3} \to \S^{2}$ be a harmonic submersion. The gradient and Laplacian of the product $\lambda\mu$ satisfy
$$
\nabla (\lambda\mu) = \sum_{k=1}^{3} (\mu b_{1k}^{4} + \lambda b_{2k}^{5}) \alpha_{k}
$$
and
$$
\Delta (\lambda\mu) = \lambda\mu(4-\lambda^2-\mu^2) + 2\sum_{k=1}^{3} (b_{1k}^{4}b_{2k}^{5} - b_{1k}^{5}b_{2k}^{4}) + \frac{\mu}{\lambda}\sum_{k=1}^{3} (b_{3k}^{4})^2 + \frac{\lambda}{\mu} \sum_{k=1}^{3} (b_{3k}^{5})^2,
$$
where the indices are with respect to the frames arising from the singular value decomposition.
\end{lemma}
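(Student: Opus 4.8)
The plan is to reduce everything to the product rule for the gradient and the Laplacian, feeding in the first-order identities of Lemma \ref{7.1} and the second-order formulas of Lemma \ref{7.2}. Since $f$ is a submersion we have $\lambda,\mu>0$ on $U$, so the ratios $\mu/\lambda$ and $\lambda/\mu$ appearing below are harmless. All computations will be carried out on the open dense set where $\lambda>\mu$, on which $\lambda^2,\mu^2$, and hence $\lambda,\mu$, are smooth; the final formula is then extended to the whole of $U$ by continuity, recalling from Lemma \ref{lapu} that $v^2=\lambda^2\mu^2$ is real-analytic, so that $v=\lambda\mu$ is smooth.

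For the gradient I would simply write $\nabla(\lambda\mu)=\mu\nabla\lambda+\lambda\nabla\mu$ and insert $\alpha_k(\lambda)=b_{1k}^{4}$, $\alpha_k(\mu)=b_{2k}^{5}$ from Lemma \ref{7.1}, obtaining $\nabla(\lambda\mu)=\sum_{k}(\mu b_{1k}^{4}+\lambda b_{2k}^{5})\alpha_{k}$ at once. The same lemma supplies the first-order data I will need for the Laplacian, namely $|\nabla\lambda|^2=\sum_{k}(b_{1k}^{4})^2$, $|\nabla\mu|^2=\sum_{k}(b_{2k}^{5})^2$, and $\langle\nabla\lambda,\nabla\mu\rangle=\sum_{k}b_{1k}^{4}b_{2k}^{5}$.

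For the Laplacian the idea is to avoid computing $\Delta\lambda$ and $\Delta\mu$ directly and instead route through $\Delta(\lambda^2)$ and $\Delta(\mu^2)$. Starting from $\Delta(\lambda\mu)=\mu\Delta\lambda+\lambda\Delta\mu+2\langle\nabla\lambda,\nabla\mu\rangle$ and using $\Delta(\lambda^2)=2\lambda\Delta\lambda+2|\nabla\lambda|^2$ together with its $\mu$-analogue to solve for $\mu\Delta\lambda=\tfrac{\mu}{2\lambda}\Delta(\lambda^2)-\tfrac{\mu}{\lambda}|\nabla\lambda|^2$ and $\lambda\Delta\mu=\tfrac{\lambda}{2\mu}\Delta(\mu^2)-\tfrac{\lambda}{\mu}|\nabla\mu|^2$, I arrive at the intermediate identity
$$
\Delta(\lambda\mu)=\tfrac{\mu}{2\lambda}\Delta(\lambda^2)+\tfrac{\lambda}{2\mu}\Delta(\mu^2)-\tfrac{\mu}{\lambda}|\nabla\lambda|^2-\tfrac{\lambda}{\mu}|\nabla\mu|^2+2\langle\nabla\lambda,\nabla\mu\rangle.
$$
Substituting the two formulas of Lemma \ref{7.2}, the zero-order terms combine as $\lambda\mu(2-\mu^2)+\lambda\mu(2-\lambda^2)=\lambda\mu(4-\lambda^2-\mu^2)$, while $\tfrac{\mu}{\lambda}\sum_{k}(b_{3k}^{4})^2$ and $\tfrac{\lambda}{\mu}\sum_{k}(b_{3k}^{5})^2$ are already in final form.

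The decisive and only genuinely delicate step is the fate of the singular terms $\pm\sum_{k}\tfrac{(\lambda b_{2k}^{4}+\mu b_{1k}^{5})^2}{\lambda^2-\mu^2}$, which enter $\Delta(\lambda^2)$ with a plus and $\Delta(\mu^2)$ with a minus. Weighted by $\tfrac{\mu}{2\lambda}$ and $\tfrac{\lambda}{2\mu}$ they assemble into $\big(\tfrac{\mu}{\lambda}-\tfrac{\lambda}{\mu}\big)\sum_{k}\tfrac{(\lambda b_{2k}^{4}+\mu b_{1k}^{5})^2}{\lambda^2-\mu^2}=-\tfrac{1}{\lambda\mu}\sum_{k}(\lambda b_{2k}^{4}+\mu b_{1k}^{5})^2$, so that the dangerous factor $\lambda^2-\mu^2$ cancels. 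Expanding the square and recalling $|B_{1k}|^2=(b_{1k}^{4})^2+(b_{1k}^{5})^2$ and $|B_{2k}|^2=(b_{2k}^{4})^2+(b_{2k}^{5})^2$, one then checks that the $(b_{2k}^{4})^2$ and $(b_{1k}^{5})^2$ contributions cancel identically, that the $\tfrac{\mu}{\lambda}(b_{1k}^{4})^2$ and $\tfrac{\lambda}{\mu}(b_{2k}^{5})^2$ contributions are exactly killed by the $-\tfrac{\mu}{\lambda}|\nabla\lambda|^2$ and $-\tfrac{\lambda}{\mu}|\nabla\mu|^2$ terms, and that the surviving cross terms combine with $2\langle\nabla\lambda,\nabla\mu\rangle$ to give $2\sum_{k}(b_{1k}^{4}b_{2k}^{5}-b_{1k}^{5}b_{2k}^{4})$. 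This produces the asserted formula. I expect the bookkeeping of these cancellations to be the main obstacle; the payoff is that the resulting expression carries no $(\lambda^2-\mu^2)^{-1}$ factor and is therefore continuous across the set $\{\lambda=\mu\}$, which is precisely what allows the formula to be extended from the dense set $\{\lambda>\mu\}$ to all of $U$.
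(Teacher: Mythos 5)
Your computation is correct, and it follows the route the paper intends: Lemma \ref{lapv} is presented there as a direct consequence of Lemmata \ref{7.1} and \ref{7.2}, which is exactly what you do --- the product rule expressed through $\Delta(\lambda^2)$ and $\Delta(\mu^2)$, with the singular terms recombining via $\tfrac{\mu}{\lambda}-\tfrac{\lambda}{\mu}=-\tfrac{\lambda^2-\mu^2}{\lambda\mu}$ so that the factor $(\lambda^2-\mu^2)^{-1}$ cancels. I checked all of the cancellations you list and they are right.

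There is, however, one genuine gap: you work on ``the open dense set where $\lambda>\mu$'', but for a harmonic submersion this set need not be dense --- it can even be empty. The Hopf fibration itself has $\lambda\equiv\mu\equiv 2$, and on any connected component of $U$ on which $f$ is horizontally conformal both your computation and the subsequent continuity extension say nothing at all; this is precisely the case of central interest in the paper. The repair must be stated: since $f$ is harmonic, hence real analytic, the function $(\lambda^2-\mu^2)^2=u^2-4v^2$ is real analytic on $U$, so on each connected component either $\{\lambda>\mu\}$ is dense (and your argument applies) or $\lambda\equiv\mu$. In the latter case, Lemma \ref{7.1} with $\lambda\equiv\mu$ gives $b_{1k}^{4}=b_{2k}^{5}$ and $b_{1k}^{5}=-b_{2k}^{4}$ (cf.\ \eqref{tanos1}), hence
\begin{equation*}
2\sum_{k=1}^{3}\bigl(b_{1k}^{4}b_{2k}^{5}-b_{1k}^{5}b_{2k}^{4}\bigr)
+\sum_{k=1}^{3}(b_{3k}^{4})^{2}+\sum_{k=1}^{3}(b_{3k}^{5})^{2}
=2\sum_{k=1}^{3}|B_{1k}|^{2}+\sum_{k=1}^{3}|B_{3k}|^{2}=|B|^{2},
\end{equation*}
so the asserted right-hand side equals $\lambda\mu(4-\lambda^2-\mu^2)+|B|^{2}=|B|^{2}+2u-2v^{2}$, which is $\tfrac{1}{2}\Delta u=\Delta(\lambda\mu)$ by Lemma \ref{lapu}; the formula therefore holds there too, but by this separate (easy) verification, not by your density argument. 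Two smaller points. First, continuity of the right-hand side across $\{\lambda=\mu\}$ does not follow merely from the absence of $(\lambda^2-\mu^2)^{-1}$: the components $b_{ij}^{c}$ are taken in singular value frames, which can fail to converge at such points; what saves the limiting argument is that the combinations $\sum_k(b_{1k}^{4}b_{2k}^{5}-b_{1k}^{5}b_{2k}^{4})$ and $\sum_k\bigl[(b_{3k}^{4})^{2}+(b_{3k}^{5})^{2}\bigr]$ are frame invariant (while $\mu/\lambda,\,\lambda/\mu\to 1$), and this deserves a sentence. Second, ``$v^{2}$ real analytic, hence $v=\lambda\mu$ smooth'' is not a valid implication by itself (consider $v^2=x^2$); what you need is that $v\neq 0$ on $U$ because $f$ is a submersion, or simply that $v$ is smooth by its definition $f^{*}\omega_{\S^2}=v\,\omega_{\mathcal{H}}$.
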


As an immediate consequence of Lemmata~\ref{lapu} and \ref{lapv} we obtain the following result.
\begin{lemma}\label{lapw}
Let $f: U\subset\S^{3} \to \S^{2}$ be a harmonic submersion. Consider the function
$$w=(\lambda-\mu)^2\ge 0.$$
At points where $w$ is smooth, its gradient and Laplacian are given by
$$
\nabla w = 2(\lambda - \mu)\sum_{k=1}^{3} (b_{1k}^{4} - b_{2k}^{5}) \alpha_{k}
$$
and
$$
\frac{1}{2}\Delta w = w(\lambda\mu+2) + | B |^{2} - 2\sum_{k=1}^{3} (b_{1k}^{4}b_{2k}^{5} - b_{1k}^{5}b_{2k}^{4}) - \frac{\mu}{\lambda}\sum_{k=1}^{3} (b_{3k}^{4})^2 - \frac{\lambda}{\mu} \sum_{k=1}^{3} (b_{3k}^{5})^2,
$$
where the indices are with respect to the frames arising from the singular value decomposition.
\end{lemma}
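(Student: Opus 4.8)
The plan is to exploit the elementary identity
\[
w = (\lambda - \mu)^2 = (\lambda^2 + \mu^2) - 2\lambda\mu = u - 2\lambda\mu,
\]
which expresses $w$ as a linear combination of the two functions already treated in Lemmata \ref{lapu} and \ref{lapv}. Since $v = \lambda\mu$ is taken positive on $U$, we may write $w = u - 2v$. On the open set where $\lambda \neq \mu$ both $u$ (which is smooth on all of $\S^3$) and $\lambda\mu$ are smooth, so $w$ is smooth there and the gradient and Laplacian may be applied legitimately, which is exactly the domain in the statement.

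First I would compute the gradient by linearity, $\nabla w = \nabla u - 2\nabla(\lambda\mu)$. Substituting the expressions from Lemmata \ref{lapu} and \ref{lapv} gives
\[
\nabla w = 2\sum_{k=1}^{3}(\lambda b_{1k}^4 + \mu b_{2k}^5)\alpha_k - 2\sum_{k=1}^{3}(\mu b_{1k}^4 + \lambda b_{2k}^5)\alpha_k,
\]
and collecting the coefficient of each $\alpha_k$ yields $2(\lambda - \mu)(b_{1k}^4 - b_{2k}^5)$, which is precisely the claimed gradient formula.

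For the Laplacian I would again invoke linearity, $\Delta w = \Delta u - 2\Delta(\lambda\mu)$, and insert the two formulas. The terms involving the components of $B$ transfer immediately: the sum $\sum_k(b_{1k}^4 b_{2k}^5 - b_{1k}^5 b_{2k}^4)$ and the two sums $\tfrac{\mu}{\lambda}\sum_k(b_{3k}^4)^2$, $\tfrac{\lambda}{\mu}\sum_k(b_{3k}^5)^2$ appear with the coefficients displayed in the statement, while the $2|B|^2$ coming from $\Delta u$ becomes $|B|^2$ after halving. The only point needing care is the regrouping of the zeroth-order terms. Writing $u = \lambda^2 + \mu^2$ and $v = \lambda\mu$, the zeroth-order contribution to $\tfrac{1}{2}\Delta w$ is
\[
2(u - v^2) - \lambda\mu(4 - \lambda^2 - \mu^2) = 2u - 2v^2 - 4v + uv,
\]
and since $w = u - 2v$ one checks directly that $(u - 2v)(v + 2) = uv + 2u - 2v^2 - 4v$ coincides with this expression. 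Hence the zeroth-order term equals exactly $w(\lambda\mu + 2)$, completing the identification.

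There is essentially no analytic obstacle here, the result being an immediate consequence of the two preceding lemmata; the only matters requiring attention are the bookkeeping of signs and coefficients in the substitution and the short algebraic verification above that the curvature-type zeroth-order terms collapse to $w(\lambda\mu+2)$, together with the observation that all of this takes place on the open set where $\lambda\neq\mu$, on which $w$ is smooth.
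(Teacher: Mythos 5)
Your proposal is correct and coincides with the paper's own argument: the paper states Lemma \ref{lapw} as an ``immediate consequence of Lemmata~\ref{lapu} and \ref{lapv}'', i.e.\ precisely the linear combination $w = u - 2\lambda\mu$ that you carry out, and your algebraic verification that the zeroth-order terms collapse to $w(\lambda\mu+2)$ is exactly the bookkeeping implicit in that remark.
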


Moreover, by a direct computation we obtain the following lemma.

\begin{lemma}\label{lapbarw}
Let $f: U\subset\S^{3} \to \S^{2}$ be a harmonic map.
Consider the function
$$\varrho\doteq \sqrt{u^2-4v^2}=\lambda^2-\mu^2\ge 0.$$
At points where $\varrho$ is smooth, its Laplacian is given by
\begin{equation*}
  \frac{1}{2}\Delta\rho = 2\rho + 2\rho^{-1} \sum_{k=1}^{3} (\lambda b_{2k}^{4} + \mu b_{1k}^{5})^2 + |B_{11}|^2 - |B_{22}|^2 + 2(b_{13}^{4})^2 - 2(b_{23}^{5})^2 + (b_{33}^{4})^2 - (b_{33}^{5})^2,
\end{equation*}
where the indices are with respect to the frames arising from the singular value decomposition.
\end{lemma}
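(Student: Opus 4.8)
The plan is to avoid any fresh Bochner computation and instead exploit the identity $\varrho = \lambda^2 - \mu^2$ together with the two Laplacians already established in Lemma~\ref{7.2}. First I would record that
\[
u^2 - 4v^2 = (\lambda^2 + \mu^2)^2 - 4\lambda^2\mu^2 = (\lambda^2 - \mu^2)^2,
\]
so that $\varrho = \lambda^2 - \mu^2$ (using $\lambda \ge \mu$). Since the claimed formula contains the factor $\varrho^{-1}$, the relevant region is precisely the open set $\{\lambda > \mu\}$, on which Lemma~\ref{7.2} guarantees that both $\lambda^2$ and $\mu^2$ are smooth; hence $\varrho$ is smooth and strictly positive there, and the whole computation may be carried out pointwise on this set.

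Next I would simply subtract the two formulas of Lemma~\ref{7.2}, writing
\[
\tfrac{1}{2}\Delta\varrho = \tfrac{1}{2}\Delta(\lambda^2) - \tfrac{1}{2}\Delta(\mu^2),
\]
and handle the three groups of terms in turn. The zeroth-order part collapses to $(2-\mu^2)\lambda^2 - (2-\lambda^2)\mu^2 = 2(\lambda^2 - \mu^2) = 2\varrho$, since the quartic products $\lambda^2\mu^2$ cancel. The two fractional sums enter the two Laplacians with opposite signs, so subtracting doubles them into $2\varrho^{-1}\sum_{k=1}^3 (\lambda b_{2k}^{4} + \mu b_{1k}^{5})^2$.

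The only genuine bookkeeping lies in the remaining quadratic terms
\[
\sum_{k=1}^3 |B_{1k}|^2 - \sum_{k=1}^3 |B_{2k}|^2 + \sum_{k=1}^3 (b_{3k}^{4})^2 - \sum_{k=1}^3 (b_{3k}^{5})^2,
\]
which I would expand in components and simplify using the symmetry $b_{ij}^{c} = b_{ji}^{c}$ of the Hessian. The $k=2$ contribution to the first sum equals the $k=1$ contribution to the second (both are $|B_{12}|^2 = |B_{21}|^2$), so these cancel; the off-diagonal squares $(b_{13}^{4})^2, (b_{13}^{5})^2, (b_{23}^{4})^2, (b_{23}^{5})^2$ appearing in $|B_{13}|^2$ and $|B_{23}|^2$ then recombine with the $\alpha_3$-row sums, so that $(b_{13}^{4})^2$ and $(b_{23}^{5})^2$ each double while $(b_{13}^{5})^2$ and $(b_{23}^{4})^2$ each cancel, leaving the diagonal leftover $(b_{33}^{4})^2 - (b_{33}^{5})^2$ from $k=3$. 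The net result is exactly $|B_{11}|^2 - |B_{22}|^2 + 2(b_{13}^{4})^2 - 2(b_{23}^{5})^2 + (b_{33}^{4})^2 - (b_{33}^{5})^2$. There is no conceptual obstacle here: the argument is entirely mechanical once the symmetry of $B$ is invoked, and the main point is simply to track carefully which cross terms survive after the subtraction.
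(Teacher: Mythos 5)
Your proposal is correct and takes essentially the same route as the paper: Lemma \ref{lapbarw} is stated there as a direct computation following Lemma \ref{7.2}, namely the subtraction $\tfrac{1}{2}\Delta\varrho = \tfrac{1}{2}\Delta(\lambda^2) - \tfrac{1}{2}\Delta(\mu^2)$ on the set $\{\lambda > \mu\}$ where both are smooth. Your bookkeeping also checks out: the zeroth-order terms give $2\varrho$, the fractional sums double, $|B_{12}|^2$ cancels against $|B_{21}|^2$, the terms $(b_{13}^{4})^2$ and $(b_{23}^{5})^2$ double while $(b_{13}^{5})^2$ and $(b_{23}^{4})^2$ cancel, leaving exactly the stated formula.
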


\section{Weakly conformal harmonic maps from $\S^3$ to $\S^2$}\label{hopfmap}

In this section we consider weakly conformal harmonic maps $f: U\subset\S^3\to\S^2$. If $U$ is the entire $\S^3$, such maps were classified by Baird and Wood in \cite[Theorem 5.1]{baird2}. They showed the following:

\begin{theorem}\label{bairdwood}
Suppose that $f : \mathbb{S}^3 \to \mathbb{S}^2$ is a weakly conformal harmonic map. 
Then $f$ factors as the composition of a Hopf fibration
$\pi : \mathbb{S}^3 \to \mathbb{S}^2$ with a conformal map $g : \mathbb{S}^2 \to \mathbb{S}^2$. 
In particular, if $f$ is a submersion, then $g$ is a M\"obius transformation (up to conjugation).
\end{theorem}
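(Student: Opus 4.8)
The plan is to exploit that a weakly conformal harmonic map is precisely a harmonic morphism, and to run the structure theory for such maps in the borderline dimensions $(m,n)=(3,2)$, reducing the problem to the classification of harmonic unit vector fields with geodesic fibers already recorded as Theorem \ref{fms}. We may assume $f$ is non-constant. Since $f$ is harmonic it is real-analytic, so the critical set $C=\{df=0\}=\{\lambda=\mu=0\}$ is a proper real-analytic subvariety, and on the open dense complement $\S^3\setminus C$ the coincident singular value $\lambda=\mu$ is positive. There $f$ restricts to a horizontally conformal submersion: $\mathcal V=\ker df$ is a line field, $\mathcal H=\mathcal V^\perp$ is spanned by $\{\alpha_1,\alpha_2\}$, and $df|_{\mathcal H}$ is a conformal isomorphism onto $T\S^2$ with dilation $\lambda$.

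The analytic core is to prove that the fibers are geodesics, and here the two-dimensionality of the target is decisive: in the tension-field formula for a horizontally conformal map the horizontal-gradient term $\operatorname{grad}^{\mathcal H}\log\lambda$ enters with coefficient $n-2=0$, so that $\tau(f)=0$ becomes equivalent to the vanishing of the mean curvature of the fibers. Since the fibers are one-dimensional this says exactly that the integral curves of $\alpha_3$ are geodesics, hence great circles of $\S^3$. Lemma \ref{7.1} supplies the dictionary: the geodesic curvature $\nabla_{\alpha_3}\alpha_3$ has horizontal components $-b_{33}^4/\lambda$ and $-b_{33}^5/\mu$, so the assertion is that harmonicity forces $b_{33}^4=b_{33}^5=0$, which one confirms by feeding $\lambda=\mu$ and the weak-conformality identity $w=(\lambda-\mu)^2\equiv0$ of Lemma \ref{lapw} into the component equations for $\tau(f)=0$.

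It then remains to globalize: the great circles over regular values must assemble into a foliation of all of $\S^3$, across $C$ and without singular fibers, producing a unit vector field $\zeta$ with $\nabla_\zeta\zeta=0$. Granting this, Lemma \ref{lem_topological} identifies the leaf space with $\S^2$ and yields a smooth fiber bundle $\pi:\S^3\to\S^2$ through which the leaf-constant map $f$ factors as $f=g\circ\pi$. To apply Theorem \ref{fms} one must check that $\zeta$ is a \emph{harmonic} unit vector field---equivalently, by part (1), that $\pi$ is harmonic---which I would deduce from the fact that $f$ is a harmonic morphism with totally geodesic fibers, the conformal geodesically-foliated structure being exactly the one that makes the leaf-space projection harmonic; part (2) then identifies $\zeta$ with a Hopf vector field and $\pi$ with a Hopf fibration. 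Finally, since $\pi$ is a Riemannian submersion up to scale and $df=dg\circ d\pi$ is conformal on $\mathcal H$, the induced map $g:\S^2\to\S^2$ is conformal, and when $f$ is itself a submersion $g$ has no branch points, hence is a conformal diffeomorphism, i.e.\ a M\"obius transformation up to conjugation. I expect the globalization step---controlling the critical set and proving that the local great-circle foliation closes up into an honest fibration, reconciling the branch points of $g$ with the nonsingularity of $\pi$---to be the main obstacle.
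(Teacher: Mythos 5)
Your local analysis is correct and coincides with the paper's: on the open set where $df\neq 0$, weak conformality plus harmonicity forces the fibers to be geodesics (this is the paper's Lemma \ref{harmonic iff totally geodesic fibers}, and your observation that the $\operatorname{grad}\log\lambda$ term drops out precisely because the target is $2$-dimensional is the standard harmonic-morphism explanation of the same computation; your component argument $b_{11}^4+b_{22}^4=0$, hence $b_{33}^4=0$, is exactly what the Koszul/Lemma \ref{7.1} identities give). The reduction to Theorem \ref{fms} via the harmonicity of $\zeta$ and the quotient map of Lemma \ref{lem_topological}, and the final identification of $g$ as conformal (Möbius in the submersive case), also match the paper's endgame.

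However, there is a genuine gap, and it is exactly the step you defer in your closing sentence: extending the vertical field $\zeta$ and the great-circle fiber structure across the critical set $C=\{df=0\}$. This is not a routine technicality; it is the bulk of the paper's proof. A priori, $C$ could be a $1$- or $2$-dimensional real-analytic set, the fibers of $f$ through points of $C$ could have singular (self-intersecting or stratified) structure, and nothing yet guarantees that $\zeta$ even extends continuously to $C$, let alone smoothly, or that Theorem \ref{fms}(2) (which requires $\zeta$ \emph{globally} defined) is applicable. The paper resolves this with a chain of arguments you do not supply: \L ojasiewicz's structure theorem stratifies each fiber $\mathcal{A}_p$ and the zero set $\mathcal{A}_{df}$; a Cauchy--Kowalewsky uniqueness argument (Lemma \ref{no 3d points}) excludes $2$-dimensional regular points of fibers; an Arzel\`a--Ascoli limit of great-circle fibers through nearby regular points shows every fiber contains a great circle (Claim 1 of Lemma \ref{extendz}); a transversal-disc and parallel-transport argument fixes orientations so the whole sequence of circles converges (Claims 2--3); Meier's removable-singularity theorems upgrade the resulting continuous extension of $\zeta$ to a smooth one (Claim 4); and a separate surface-construction argument rules out two great circles in one fiber meeting transversally (Claim 5). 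Without these, your factorization $f=g\circ\pi$ is only established away from $C$, and the statement remains unproved; in particular the ``branch points of $g$ versus nonsingularity of $\pi$'' tension you flag is resolved in the paper precisely by showing that the components of $\{df=0\}$ are themselves great-circle fibers on which $f$ is constant (Lemma \ref{A_df in A_f}), not exceptional points of the fibration.
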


For the sake of completeness, we include a proof of this result which combines Theorem \ref{fms} with a detailed 
analysis of the local structure of the locus where the differential of the map $f$ vanishes identically.

We first recall a couple of useful facts about weakly conformal submersions. Assume that $U \subset \S^3$ is an open subset and that $f : U \to f(U) \subset \S^2$ is a submersion. Then there exists a unique
unit vector field $\zeta\in\mathfrak{X}(U)$ spanning $\ker(df)$ and such that $f^*\omega_{\S^2}\wedge \zeta_{\flat}$ is in the orientation of $\S^3$.

The following lemma is well-known in the literature, see for example \cite[Proposition 4.5.3]{baird1}, but for the sake of completeness we include a short proof.

\begin{lemma}\label{harmonic iff totally geodesic fibers}
Let $f: U\subset\S^3\to\S^2$ be a weakly conformal submersion, defined in an open neighbourhood $U$ of $\S^3$.
Then $f$ is harmonic if and only if its fibers are totally geodesic.
\end{lemma}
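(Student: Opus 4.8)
The plan is to compute the tension field $\tau(f)=\trace B$ directly in a local orthonormal frame adapted to the splitting $T\S^3=\mathcal{H}\oplus\mathcal{V}$, and to show that weak conformality forces the horizontal part of $\tau(f)$ to vanish identically, so that $\tau(f)$ is governed entirely by the geodesic curvature of the fibers. First I would fix a smooth local orthonormal frame $\{\alpha_1,\alpha_2\}$ of $\mathcal{H}$ and let $\zeta=\alpha_3$ be the unit field spanning $\mathcal{V}=\ker df$. Since $f$ is a submersion and $\lambda\equiv\mu\equiv\sigma$ with $\sigma=\sqrt{u/2}>0$ smooth, such a frame exists, and $\{\beta_4,\beta_5\}\doteq\{\sigma^{-1}df(\alpha_1),\sigma^{-1}df(\alpha_2)\}$ is orthonormal in $T\S^2$ by conformality. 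Writing $\tau(f)=B(\alpha_1,\alpha_1)+B(\alpha_2,\alpha_2)+B(\zeta,\zeta)$, the vertical term is immediate: since $df(\zeta)=0$,
$$
B(\zeta,\zeta)=-df(\nabla_\zeta\zeta).
$$

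The heart of the argument is the horizontal term $B(\alpha_1,\alpha_1)+B(\alpha_2,\alpha_2)$. Specializing Lemma~\ref{7.1} to $\lambda=\mu=\sigma$ yields $b^4_{1k}=\alpha_k(\sigma)=b^5_{2k}$ for each $k$, while the relation $(\lambda^2-\mu^2)\omega_{12}(\alpha_k)=\lambda b^4_{2k}+\mu b^5_{1k}$ forces $b^4_{2k}=-b^5_{1k}$, the left-hand side vanishing precisely because $\lambda=\mu$. In particular, using the symmetry of $B$,
$$
b^4_{22}=-b^5_{12}=-\alpha_1(\sigma)=-b^4_{11}\qquad\text{and}\qquad b^5_{11}=-b^4_{21}=-\alpha_2(\sigma)=-b^5_{22},
$$
so that $B(\alpha_1,\alpha_1)+B(\alpha_2,\alpha_2)=0$ and hence $\tau(f)=B(\zeta,\zeta)=-df(\nabla_\zeta\zeta)$.

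To conclude, I would observe that $df$ restricts to a conformal, hence invertible, isomorphism $\mathcal{H}\to T\S^2$, while $\nabla_\zeta\zeta$ is horizontal because $|\zeta|\equiv 1$ implies $\langle\nabla_\zeta\zeta,\zeta\rangle=0$. Therefore $\tau(f)=0$ if and only if $\nabla_\zeta\zeta=0$, i.e. if and only if the integral curves of $\zeta$—the fibers of $f$—are geodesics, which for one-dimensional fibers is exactly the totally geodesic condition. The main obstacle is the vanishing of the horizontal trace: it is the step that genuinely uses that the target is two-dimensional, so that the conformal-factor contributions cancel rather than surviving as a $\nabla\log\sigma$ term. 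Some care is also warranted because in the weakly conformal case the horizontal eigendirections are not canonically determined, so one must check that the relevant identities of Lemma~\ref{7.1} persist for an \emph{arbitrary} orthonormal frame of $\mathcal{H}$; this is the case, since their derivation only uses $|df(\alpha_a)|^2=\sigma^2$ and $\langle df(\alpha_1),df(\alpha_2)\rangle=0$, both of which hold for any orthonormal horizontal frame.
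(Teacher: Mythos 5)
Your proof is correct, and it follows the same skeleton as the paper's: split $\tau(f)$ into horizontal and vertical parts, show that the horizontal trace vanishes by conformality, and identify the vertical part as $-df(\nabla_\zeta\zeta)$, with $\nabla_\zeta\zeta$ horizontal and $df|_{\mathcal{H}}$ injective. The one genuine difference lies in the tool used for the horizontal cancellation: the paper invokes Koszul's formula to obtain the full conformal-Hessian identity $B(X,Y)=X(\log\lambda)\,df(Y)+Y(\log\lambda)\,df(X)-\langle X,Y\rangle\, df(\nabla\log\lambda)$ for $X,Y\in\Gamma(\mathcal{H})$, whose horizontal trace visibly vanishes because $df(\nabla\log\lambda)$ only sees the horizontal part of the gradient; you instead extract only the trace identities $b^4_{11}+b^4_{22}=0=b^5_{11}+b^5_{22}$ from the first-derivative formulas of Lemma~\ref{7.1} specialized at $\lambda=\mu$. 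Your route is more self-contained relative to the paper's own lemmas, and you correctly flag and resolve the subtlety that Lemma~\ref{7.1} is stated for the (here non-unique) singular frame: its derivation indeed uses only $|df(\alpha_a)|^2=\sigma^2$ and $\langle df(\alpha_1),df(\alpha_2)\rangle=0$, which hold for any orthonormal horizontal frame. What the paper's route buys is the complete structure of $B$ on $\mathcal{H}$ rather than just its trace, a formula it reuses elsewhere (e.g. in computing the Hessian $B_g$ of a conformal map $g$ of $\S^2$). You are also more explicit than the paper on the final equivalence, noting that $\nabla_\zeta\zeta$ is horizontal and $df|_{\mathcal{H}}$ is an isomorphism, so that $\tau(f)=0$ if and only if $\nabla_\zeta\zeta=0$; the paper leaves this last step implicit.
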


\begin{proof}
We define $\zeta$ as above on $U$, and denote by $\lambda$ the non-zero (double) singular value of $df$. By Koszul's formula we have that
\begin{equation}\label{eqkoz1}
B(X,Y)=X(\log\lambda)df(Y)+Y(\log\lambda)df(X)-\langle X,Y\rangle df(\nabla\log\lambda),
\end{equation}
for any pair of horizontal vector fields $X,Y\in\Gamma(\mathcal{H})$. Furthermore,
\begin{equation}\label{eqkoz2}
B(\zeta,\zeta)=\nabla_{\zeta}df(\zeta)-df(\nabla_{\zeta}\zeta)=-df(\nabla_{\zeta}\zeta)=-df(H_{\zeta}),
\end{equation}
where $H_{\zeta}$ is the geodesic curvature of the integral curves of $\zeta$. From
\eqref{eqkoz1} and \eqref{eqkoz2} we readily see that
$$
\tau(f)=-df(H_{\zeta}).
$$
This completes the proof.
\end{proof}

We now show that $\zeta$ is in fact a harmonic unit vector field.


\begin{lemma}\label{lem_geofibers}
Let $f: U\subset\S^3\to\S^2$ be a weakly conformal harmonic submersion. Then the unit vector field $\zeta$ generating the vertical distribution $\mathcal{V}$ is harmonic.
\end{lemma}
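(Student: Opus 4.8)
The plan is to prove that the rough Laplacian $\Delta\zeta$ has vanishing horizontal component, which is equivalent to the harmonic unit vector field equation $\Delta\zeta+|\nabla\zeta|^2\zeta=0$: since $|\zeta|\equiv 1$ one always has $\langle\Delta\zeta,\zeta\rangle=-|\nabla\zeta|^2$, so $\zeta$ is harmonic precisely when $\Delta\zeta$ is vertical, i.e. $df(\Delta\zeta)=0$ (recall $df$ is a conformal isomorphism on $\mathcal{H}$ and annihilates $\mathcal{V}$). Two preliminary facts are used throughout. First, since $f$ is harmonic and weakly conformal, Lemma~\ref{harmonic iff totally geodesic fibers} gives $\nabla_\zeta\zeta=0$. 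Second, differentiating the identity $df(\zeta)\equiv 0$ yields $B(X,\zeta)=-df(\nabla_X\zeta)$ for every $X$, while $\nabla_X\zeta$ is horizontal because $|\zeta|\equiv 1$.

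First I would establish a bridge identity between $\Delta\zeta$ and the Hessian $B$. Expanding $(\nabla_{\alpha_k}B)(\alpha_k,\zeta)$ and repeatedly substituting $B(\cdot\,,\zeta)=-df(\nabla_{\cdot}\zeta)$, then summing over the frame $\{\alpha_1,\alpha_2,\alpha_3=\zeta\}$, produces
\[
\sum_{k=1}^{3}(\nabla_{\alpha_k}B)(\alpha_k,\zeta)=-2\sum_{k=1}^{3}B(\alpha_k,\nabla_{\alpha_k}\zeta)-df(\Delta\zeta),
\]
where $\Delta\zeta=\sum_k(\nabla_{\alpha_k}\nabla_{\alpha_k}\zeta-\nabla_{\nabla_{\alpha_k}\alpha_k}\zeta)$. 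The task then reduces to showing that both the left-hand side and the first sum on the right vanish.

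For the left-hand side I would apply the Codazzi equation \eqref{codazzi} with $(X,Y,Z)=(\alpha_k,\alpha_k,\zeta)$. Because $df(\zeta)=0$ and the surviving term $\langle\alpha_k,\zeta\rangle df(\alpha_k)$ dies for $k=1,2$ by orthogonality and for $k=3$ since $df(\zeta)=0$, the entire right-hand side of Codazzi vanishes, giving $(\nabla_{\alpha_k}B)(\alpha_k,\zeta)=(\nabla_\zeta B)(\alpha_k,\alpha_k)$. Summing and using $\sum_k B(\alpha_k,\alpha_k)=\tau(f)\equiv 0$, together with the antisymmetry of the connection forms $\omega_{ij}(\zeta)$ contracted against the symmetric tensor $B$, gives $\sum_k(\nabla_\zeta B)(\alpha_k,\alpha_k)=0$. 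Hence the left-hand side is zero and $df(\Delta\zeta)=-2\sum_k B(\alpha_k,\nabla_{\alpha_k}\zeta)$.

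It remains to prove $\sum_k B(\alpha_k,\nabla_{\alpha_k}\zeta)=0$, and this is where weak conformality enters decisively; to my mind it is the crux. The $k=3$ term vanishes since $\nabla_\zeta\zeta=0$. For $k=1,2$, writing $\nabla_{\alpha_k}\zeta=\sum_{i=1,2}a_{ki}\alpha_i$ and expanding $B$ on $\mathcal{H}$ via Koszul's formula \eqref{eqkoz1}, the sum collapses to a combination of $(a_{11}-a_{22})$ and $(a_{12}+a_{21})$ with coefficients built from $\nabla\lambda$. Now the identity $B(X,\zeta)=-df(\nabla_X\zeta)$ identifies the $a_{ki}$ with the mixed Hessian components $b^{4}_{k3},b^{5}_{k3}$, and the weakly conformal constraints read off from Lemma~\ref{7.1} — namely $\alpha_k(\lambda)=\alpha_k(\mu)$ forcing $b^{4}_{13}=b^{5}_{23}$, and the $\omega_{12}$-equation degenerating (since $\lambda^2-\mu^2=0$) to $b^{4}_{23}=-b^{5}_{13}$ — translate into exactly $a_{11}=a_{22}$ and $a_{12}+a_{21}=0$. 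The sum therefore vanishes identically, so $df(\Delta\zeta)=0$ and $\zeta$ is harmonic. I expect the main obstacle to be recognizing that these Cauchy--Riemann-type identities for the singular values, available only because $\lambda\equiv\mu$, are precisely what annihilates the symmetric traceless part of $\nabla\zeta|_{\mathcal{H}}$; without weak conformality this last sum need not vanish.
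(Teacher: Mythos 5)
Your proof is correct and takes essentially the same route as the paper's: reduce harmonicity of $\zeta$ to $df(\Delta\zeta)=0$, derive the identity relating $df(\Delta\zeta)$ to $\sum_k(\nabla_{\alpha_k}B)(\alpha_k,\zeta)$ and $\sum_k B(\alpha_k,\nabla_{\alpha_k}\zeta)$, kill the first sum via the Codazzi equation \eqref{codazzi} together with harmonicity, and kill the second via the $\lambda\equiv\mu$ relations of Lemma \ref{7.1}. The only difference is cosmetic: you organize the final cancellation through Koszul's formula \eqref{eqkoz1} and the components $a_{ki}$ of $\nabla\zeta$, while the paper expands the same sum in the Hessian components $b^c_{ij}$ and cancels term by term.
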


\begin{proof}
Consider a local orthonormal frame $\{\alpha_1,\alpha_2,\alpha_3\doteq\zeta\}$ and denote by $\lambda$ the non-zero (double) singular value of $df$.
From Lemma \ref{7.1} for $\lambda\equiv \mu$ we see that
\begin{equation}\label{tanos1}
b_{1k}^4=b_{2k}^5\quad\text{and}\quad b_{1k}^5=-b_{2k}^4,\quad\text{for}\,\, k\in\{1,2,3\}.
\end{equation}
Define the vector field
$$
Z\doteq \Delta\zeta+|\nabla \zeta|^2\zeta.
$$
We can easily see that $Z$ is perpendicular to $\zeta$. Indeed
\[ \langle Z,\alpha_3\rangle = \sum_{k=1}^{2}\langle \nabla_{\alpha_k}\nabla_{\alpha_k}\alpha_3,\alpha_3\rangle +\sum_{k=1}^2|\nabla_{\alpha_k}\alpha_3|^2 = -\sum_{k=1}^{2}\langle \nabla_{\alpha_k}\alpha_3,\nabla_{\alpha_k}\alpha_3\rangle +\sum_{k=1}^2|\nabla_{\alpha_k}\alpha_3|^2 = 0. \]
It remains to show that the horizontal component of $Z$ is zero as well, or equivalently
that
$$df(Z)=0.$$
Using the Codazzi equation and the harmonicity of $f$, we obtain
$$
\begin{aligned}
df(Z) &=\sum_{k=1}^3 df\big\{\nabla_{\alpha_{k}}\nabla_{\alpha_{k}}\alpha_3 - \nabla_{\nabla_{\alpha_{k}}\alpha_{k}}\alpha_3\big\} \\
&=\sum_{k=1}^3 \big\{\nabla_{\alpha_{k}}df(\nabla_{\alpha_{k}}\alpha_3) - B(\alpha_{k},\nabla_{\alpha_{k}}\alpha_3) - \nabla_{\nabla_{\alpha_{k}}\alpha_{k}}df(\alpha_3) + B(\nabla_{\alpha_{k}}\alpha_{k},\alpha_3) \big\}\\
&=\sum_{k=1}^3 \big\{\nabla_{\alpha_{k}}\nabla_{\alpha_{k}}df(\alpha_3) - \nabla_{\alpha_{k}}B(\alpha_{k},\alpha_3) -B(\alpha_{k},\nabla_{\alpha_{k}}\alpha_3) + B(\nabla_{\alpha_{k}}\alpha_{k},\alpha_3)\big\}. \\
&=-\sum_{k=1}^3  \big\{(\nabla_{\alpha_{k}}B)(\alpha_3,\alpha_{k}) -2B(\nabla_{\alpha_{k}}\alpha_3, \alpha_{k})
\big\} \\
&=-\sum_{k=1}^3  \big\{(\nabla_{\alpha_3}B)(\alpha_{k},\alpha_{k}) + 2\omega_{13}(\alpha_{k})B_{1k} + 2\omega_{23}(\alpha_{k})B_{2k}\big\}\\
&=-\sum_{k=1}^3  \big\{2\omega_{13}(\alpha_{k})B_{1k} + 2\omega_{23}(\alpha_{k})B_{2k}\big\}.
\end{aligned}
$$
By Lemma \ref{7.1} for $\lambda\equiv\mu$ and \eqref{tanos1} we deduce that
$$
\begin{aligned}
df(Z)&= \frac{2}{\lambda}\sum_{k=1}^3(b^{4}_{k3}B_{1k} + b^{5}_{k3}B_{2k}) \\
&= \frac{2}{\lambda}(b_{13}^{4}b_{11}^{4} + b_{23}^{4}b_{12}^{4} + b_{13}^{5}b_{12}^{4} + b_{23}^{5}b_{22}^{4})\beta_{4} \\
&\quad+ \frac{2}{\lambda}(b_{13}^{4}b_{11}^{5} + b_{23}^{4}b_{12}^{5} + b_{13}^{5}b_{12}^{5} + b_{23}^{5}b_{22}^{5})\beta_{5} \\
&= \frac{2}{\lambda}(b_{13}^{4}b_{11}^{4} + b_{23}^{4}b_{12}^{4} - b_{23}^{4}b_{12}^{4} - b_{13}^{4}b_{11}^{4})\beta_{4} \\
&\quad+ \frac{2}{\lambda}(b_{13}^{4}b_{11}^{5} + b_{23}^{4}b_{12}^{5} - b_{23}^{4}b_{12}^{5} - b_{13}^{4}b_{11}^{5})\beta_{5} \\
&= 0,
\end{aligned}
$$
which concludes the proof.
\end{proof}

We are ready to prove Theorem \ref{bairdwood}. 

{\em Proof of Theorem $\ref{bairdwood}$:} Define
$$U=\{p\in\S^3:\rank(df_p)=2\}.$$
Note that $U$ is open and that $f : U \to f(U)\subset \mathbb{S}^2\subset\R^3$ is a weakly conformal submersion with totally geodesic fibers by Lemma \ref{lem_geofibers}. Let 
$\zeta$ be the unit vector field on $U$ defined as above.
We denote the fibers of $f$ passing through $p$ by
\[
\mathcal{A}_{p} \doteq \{ q \in \S^3 : f(q) = f(p) \}.
\]
Since $f$ is real analytic, each $\mathcal{A}_{p}$ is the zero set of the real analytic function $h: U\to\R$ given by
$$h(q)=\|f(q)-f(p)\|^2.$$
By \L ojasiewicz's structure theorem \cite[Theorem 6.3.3]{kr}, there exists a neighbourhood of $p$ where  
$\mathcal{A}_{p}$ decomposes as the disjoint union
\[
\mathcal{A}_{p} = \mathcal{V}_0 \cup \mathcal{V}_1 \cup \mathcal{V}_2 \cup \mathcal{V}_3,
\]
where $\mathcal{V}_d$, $0 \le d \le 3$, is either empty or a finite disjoint union of $d$-dimensional real analytic sub-varieties without boundary.
Moreover, up to shrinking the neighbourhood of $p$, if $\mathcal{V}_d \neq \emptyset$ then $\overline{\mathcal{V}}_d$ contains the union of all lower dimensional strata $\mathcal{V}_j$ for $0 \le j \le d-1$.
A point $q \in \mathcal{A}_{p}$ is called a \emph{regular point of dimension $d$} 
if there exists a neighborhood $\Omega$ of $q$ such that $\Omega \cap \mathcal{A}_{p}$ is a connected 
$d$-dimensional real analytic submanifold of $\Omega$. In particular, a regular point of dimension zero is an isolated point of $\mathcal{V}$. Otherwise, $q$ is called a 
\emph{singular point}. 

\begin{remark}\label{rem_singular}
	 By the last property we quote of \L ojasiewicz's theorem, each singular point is a limit of regular points lying in the same connected component of $\mathcal{A}_{p}$. In particular, $\mathcal{A}_{p}$ must have at least one regular point.
\end{remark}
Since $|df|^2$ is a real analytic function as well, the above discussion also applies to
\[
\mathcal{A}_{df} \doteq \{ p \in \S^3 : |df_p| = 0 \}.
\]
Notice that
\[
df^*  df \sim 
\begin{pmatrix}
	\lambda^2 & 0 & 0 \\
	0 & \lambda^2 & 0 \\
	0 & 0 & 0
\end{pmatrix},
\]
therefore the rank of the differential can be either $0$ or $2$ and
$\mathcal{A}_{df}=\S^3\setminus U.$

Suppose henceforth that $f$ in non-constant.

\begin{lemma}\label{A_df in A_f}
	The map $f$ is constant on connected components of $\mathcal{A}_{df}$.
\end{lemma}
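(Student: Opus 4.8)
The plan is to exploit that the \emph{full} differential $df$, not merely its rank-two part, vanishes at every point of $\mathcal{A}_{df}$, so that $f$ cannot move along any curve that stays inside $\mathcal{A}_{df}$.

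First I would record the elementary but decisive observation: if $\gamma : (-\epsilon,\epsilon) \to \mathcal{A}_{df}$ is any $C^1$ curve, then for each $t$ the point $\gamma(t)$ lies in $\mathcal{A}_{df}$, where $df_{\gamma(t)} = 0$; hence
\[
\frac{d}{dt} f(\gamma(t)) = df_{\gamma(t)}(\gamma'(t)) = 0,
\]
and $f\circ\gamma$ is constant. Thus $f$ is constant along every smooth curve contained in $\mathcal{A}_{df}$.

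Next I would feed this into the \L ojasiewicz stratification already set up for $\mathcal{A}_{df}$. On each positive-dimensional stratum $\mathcal{V}_d$ with $1 \le d \le 3$, which by construction is a smooth real-analytic submanifold, any two nearby points are joined by a smooth curve lying in the stratum, so by the previous step $f$ is locally constant and therefore constant on each connected component of $\mathcal{V}_d$; on the zero-dimensional stratum the claim is vacuous. It remains to show that these locally constant values glue to a single value on each connected component $\mathcal{C}$ of $\mathcal{A}_{df}$.

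For the gluing I would argue by downward induction on the dimension of the strata, using the frontier property recorded in Remark \ref{rem_singular}: every singular point $q \in \mathcal{C}$ is a limit of regular points lying in the same connected component, and by continuity of $f$ the value $f(q)$ equals the common value of $f$ on the regular points converging to it. Since a connected component is chained together through its strata by this frontier condition, one propagates the constant value from the top-dimensional regular locus down to every point of $\mathcal{C}$. I expect the main obstacle to be precisely this gluing: one must verify that the constant values attached to the various connected smooth pieces \emph{agree}, which requires combining the frontier condition with the connectedness of $\mathcal{C}$ (equivalently, the fact that any two points of $\mathcal{C}$ can be joined by a piecewise-smooth arc inside $\mathcal{A}_{df}$) rather than treating each stratum in isolation.
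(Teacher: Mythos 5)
Your proposal is correct and follows essentially the same route as the paper: the vanishing of the full differential kills $f$ along curves inside $\mathcal{A}_{df}$, giving local constancy on the positive-dimensional regular strata, after which continuity handles the singular points. Your explicit treatment of the gluing across strata via the frontier property is simply a more careful spelling-out of the step the paper dispatches with the single remark that the extension to singular points follows ``since $f$ is continuous.''
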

\begin{proof} The result is trivial for isolated points, i.e. regular points of dimension $0$.
	Let $p$ be a regular point of $\mathcal{A}_{df}$ of dimension $d \geq 1$. There exists an open connected neighborhood $\Omega$ of $p$ such that $\Omega \cap \mathcal{A}_{df}$ is a connected $d$-dimensional submanifold of $\Omega$. For any curve
	$\gamma : [0,1] \to \Omega \cap \mathcal{A}_{df}$
	we have that
	$(f \circ \gamma)' \equiv 0$
	everywhere, therefore $f \circ \gamma$ is constant. Since any two points of $\Omega \cap \mathcal{A}_{df}$ can be connected by a smooth curve, it must be that $f$ is constant on connected sets of regular points of $\mathcal{A}_{df}$. This can be extended to include singular points since $f$ is continuous.
\end{proof}

\begin{lemma}\label{no 3d points}
The fibers of $f$ have no regular points of dimension $2$ or  $3$.
\end{lemma}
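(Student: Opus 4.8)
The plan is to treat the two dimensions separately, since the case $d=3$ is immediate while $d=2$ uses the harmonic map equation in an essential way. If some fiber had a regular point of dimension $3$, then $\mathcal{A}_p$ would contain a nonempty open subset of $\S^3$ on which $f\equiv f(p)$; as a harmonic map is real-analytic and $\S^3$ is connected, $f$ would be constant, contradicting our standing assumption that $f$ is non-constant.

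So suppose, for contradiction, that some fiber has a regular point $q$ of dimension $2$, and let $\Sigma=\mathcal{A}_p\cap\Omega$ be the connected $2$-dimensional real-analytic submanifold it determines in a neighbourhood $\Omega$ of $q$. First I would observe that $\Sigma\subset\mathcal{A}_{df}$: on $U$ the map $f$ is a submersion, hence its fibers are $1$-dimensional there, and a $2$-dimensional set cannot lie inside a $1$-dimensional manifold. Therefore $\Sigma\cap U=\emptyset$ and $df\equiv 0$ along $\Sigma$.

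Next I would show that the Hessian $B$ vanishes identically on $\Sigma$. Since $u=|df|^2\ge 0$ attains its minimum value $0$ on the submanifold $\Sigma$, at every $q'\in\Sigma$ we have $\nabla u(q')=0$ and the tangential second derivatives vanish, $\operatorname{Hess}u(e_i,e_j)(q')=0$ for $e_i,e_j\in T_{q'}\Sigma$. On the other hand, differentiating $u=\langle df,df\rangle$ twice and using $df_{q'}=0$ yields, for any orthonormal frame $\{e_1,e_2,e_3=n\}$ with $e_1,e_2$ tangent to $\Sigma$ and $n$ normal,
\[
\operatorname{Hess}u(X,Y)(q')=2\sum_{k=1}^{3}\langle B(X,e_k),B(Y,e_k)\rangle .
\]
Taking $X=Y=e_i$ forces $B(e_i,\cdot)\equiv 0$ for $i=1,2$, and then the harmonicity relation $\tau(f)=\operatorname{tr}B=0$ gives $B(n,n)=-B(e_1,e_1)-B(e_2,e_2)=0$; hence $B\equiv 0$ on $\Sigma$.

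Finally I would bootstrap this to infinite-order vanishing, proving by induction on $m$ that $\nabla^m df\equiv 0$ on $\Sigma$, the cases $m=0,1$ being the two facts above. Differentiating along $\Sigma$ kills every component of $\nabla^{m+1}df$ whose outermost derivative is tangent to $\Sigma$; commuting covariant derivatives introduces only curvature acting on lower-order derivatives of $df$, which vanish on $\Sigma$ by the inductive hypothesis, and the Codazzi equation \eqref{codazzi}, whose right-hand side is linear in $df$, together with its derivatives vanishes on $\Sigma$ as well. These two facts let one move any tangential index into the outermost slot modulo terms vanishing on $\Sigma$, so every component carrying a tangential index vanishes at $q$; the single purely normal component is then annihilated by differentiating $\operatorname{tr}B=0$ in the normal direction. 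Thus $df$ vanishes to infinite order at $q$, and real-analyticity of $f$ forces $df\equiv 0$ in a neighbourhood of $q$; but then $f$ is constant near $q$, so $\mathcal{A}_p$ is $3$-dimensional there, contradicting that $q$ is a regular point of dimension $2$. I expect this inductive step to be the main obstacle: the careful bookkeeping of the curvature commutators and of the differentiated Codazzi identity, certifying that every partly tangential component of $\nabla^{m+1}df$ reduces, modulo terms vanishing on $\Sigma$, to one with tangential outermost index.
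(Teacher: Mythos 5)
Your proof is correct, and it follows the paper's strategy up to the very last step: dimension $3$ is excluded by analyticity, and for dimension $2$ both arguments first establish that $df\equiv 0$ along the two-dimensional fiber piece $\Sigma$ (your passage from ``$\Sigma\cap U=\emptyset$'' to ``$df\equiv 0$ on $\Sigma$'' tacitly uses the rank dichotomy $\operatorname{rank}(df)\in\{0,2\}$ of weakly conformal maps, which the paper records as $\mathcal{A}_{df}=\S^3\setminus U$ just before the lemma, so this is legitimate). The divergence is in how local constancy of $f$ is then deduced. The paper regards $f$ as a solution of the analytic Cauchy problem $\tau(f)=0$ in $\Omega$ with $f=f(p)$, $df=0$ on $\Sigma$, and cites the Cauchy--Kowalewsky theorem for uniqueness. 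You instead prove the needed unique continuation by hand: the identity $\operatorname{Hess}u(X,Y)=2\sum_{k}\langle B(X,e_k),B(Y,e_k)\rangle$ at zeros of $df$ gives $B\equiv 0$ on $\Sigma$, and then an induction --- tangential differentiation kills components of $\nabla^{m+1}df$ with outermost tangential index; curvature commutators and the differentiated Codazzi equation, being at least linear in lower-order derivatives of $df$, allow permuting indices modulo terms vanishing on $\Sigma$; and the trace identity $\operatorname{tr}B=0$ disposes of the purely normal component --- yields $\nabla^{m}df\equiv 0$ along $\Sigma$ for every $m$, whence $f$ is constant near $q$ by real-analyticity. Your route is longer but self-contained: it is in effect a proof of the uniqueness half of Cauchy--Kowalewsky adapted to this geometric setting, and it makes explicit where harmonicity enters (namely, in recovering the purely normal jets). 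The paper's route buys brevity by outsourcing exactly that step to the quoted theorem; yours buys independence from it, and the bookkeeping you flag as the main obstacle does close, precisely because every commutator and every Codazzi remainder vanishes on $\Sigma$ by the inductive hypothesis.
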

\begin{proof}
Regular points of dimension $3$ would force $f$ to be constant by analyticity.
Let $p$ be a regular point of $\mathcal{A}_{p}$ of dimension $2$. Then there exists a connected open neighborhood $\Omega$ of $p$ such that $\mathcal{A}_{p} \cap \Omega$ is a $2$-dimensional submanifold of  $\Omega$. Furthermore, notice that $\rank df\le 1$ implies $\rank df=0$ on $\mathcal{A}_{p} \cap \Omega$.
It follows that $f$ is a solution of the Cauchy problem
\begin{equation*}\label{cauchy problem}
	\begin{cases}
		\tau(f) \!\!\!\!\!&=\,\,\,\, 0, \,\,\,\,\,\,\,\,\,\,\,\,\,\,\,\,\text{in}\,\,\,\,\,\, \Omega, \\
		df &= \,\,\,\,0, \,\,\,\,\,\,\,\,\,\,\,\,\,\,\,\,\text{in}\,\,\,\,\,\, \mathcal{A}_{p} \cap \Omega, \\
		f &= \,\,\,f(p), \,\,\,\,\,\,\,\,\,\text{in}\,\,\,\,\,\,\, \mathcal{A}_{p} \cap \Omega.
	\end{cases}
\end{equation*}
By the Cauchy-Kowalewsky theorem, this problem admits the unique solution 
$f = f(p)$ in $\Omega$. Therefore $f$ is again constant by analyticity,
which is a contradiction.
\end{proof}

\begin{lemma}\label{extendz}
The fibers of $f$ consist of non-intersecting great circles in $\mathbb{S}^3$, and the vector field $\zeta$ admits a smooth extension on the entire $\S^3$.
\end{lemma}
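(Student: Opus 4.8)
The plan is to construct the great--circle foliation on the submersive locus $U$ first, and then to propagate it across the critical set $\mathcal{A}_{df}$; the extension across $\mathcal{A}_{df}$ is the delicate point.

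\textbf{Step 1 (the foliation over $U$).} By Lemma~\ref{lem_geofibers} together with Lemma~\ref{harmonic iff totally geodesic fibers}, the restriction $f|_U$ is a weakly conformal submersion with totally geodesic fibers, so the integral curves of $\zeta$ are geodesic arcs, i.e.\ arcs of great circles. Fixing $p\in U$, let $C_p\subset\S^3$ be the complete great circle tangent to $\zeta_p$ at $p$. Since $f$ is real-analytic and $C_p$ is a real-analytic curve, $f|_{C_p}$ is real-analytic and agrees with the constant $f(p)$ on a subarc; by the identity theorem $f|_{C_p}\equiv f(p)$, so the whole great circle $C_p$ lies in the fiber $\mathcal{A}_p$. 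If two such fiber circles met at a point $q\in U$, their distinct tangent lines would furnish two independent directions in $\ker df_q$, contradicting $\dim\ker df_q=1$; hence $\{C_p:p\in U\}$ are pairwise disjoint, they foliate $U$, and $\zeta$ is their oriented unit tangent.

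\textbf{Step 2 (the critical locus and candidate circles).} The Cauchy--Kowalevski argument of Lemma~\ref{no 3d points}, combined with Lemma~\ref{A_df in A_f}, applies verbatim to $\mathcal{A}_{df}$: a regular point of dimension $2$ or $3$ would carry the Cauchy data $f\equiv\mathrm{const}$, $df=0$ and force $f$ locally constant, a contradiction. Thus $\mathcal{A}_{df}$ is at most one-dimensional and $U$ is open, dense and connected. Given $p_0\in\mathcal{A}_{df}$, choose $p_n\in U$ with $p_n\to p_0$ and regard each $C_{p_n}$ as an oriented $2$-plane in the (compact) Grassmannian of $\R^4$; passing to a subsequence, $C_{p_n}\to C_0$, a great circle through $p_0$, and since $f(x_n)=f(p_n)\to f(p_0)$ for points $x_n\in C_{p_n}$ approximating any $x\in C_0$, continuity gives $f|_{C_0}\equiv f(p_0)$. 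Hence a fiber great circle passes through every point of $\S^3$.

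\textbf{Step 3 (the main obstacle: no branching at $\mathcal{A}_{df}$).} The hard part is to show that $C_0$ is independent of the sequence, equivalently that \emph{no two fiber great circles cross at a point of $\mathcal{A}_{df}$}; this is exactly what makes $\zeta$ single-valued and continuous there, and what forces the circles to be pairwise disjoint. Two distinct great circles meet in at most a pair of antipodal points, so a crossing occurs at some $p_0$ with two independent tangent directions $T,T'$ lying in the tangent cone of $\mathcal{A}_{p_0}$; a crossing with a point of $U$ is already excluded by the one-dimensional kernel, so $p_0\in\mathcal{A}_{df}$. Since $df_{p_0}=0$, harmonicity gives the quadratic model $f(\exp_{p_0}X)=f(p_0)+\tfrac12 B_{p_0}(X,X)+O(|X|^3)$ with $B_{p_0}=(B^4_{p_0},B^5_{p_0})$ a pair of trace-free forms, so the tangent cone lies in the common zero set of $B^4_{p_0}$ and $B^5_{p_0}$. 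The plan is to combine this local model with the real-analytic stratification of $\mathcal{A}_{p_0}$ (\L ojasiewicz, Remark~\ref{rem_singular}) and the already-established absence of regular fiber points of dimension $\ge 2$ to conclude that each connected component of $\mathcal{A}_{df}$ is a \emph{single entire great circle} contained in a fiber, onto which the neighbouring regular circles converge without branching. I expect this tangent-cone/dimension analysis at the critical points to be the genuinely delicate step, as it is precisely what rules out a fiber splitting into several great-circle branches where $df$ vanishes.

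\textbf{Step 4 (smoothness).} Granting Step~3, every point of $\S^3$ lies on a unique fiber great circle, these circles are pairwise disjoint, and $\zeta$ is a well-defined continuous unit vector field whose integral curves are great circles. On $U$ the field $\zeta$ solves the harmonic-unit-vector-field equation \eqref{harmonicz}; being continuous on $\S^3$ and real-analytic off the codimension-two set $\mathcal{A}_{df}$, it extends real-analytically across $\mathcal{A}_{df}$ by a removable-singularity argument for this elliptic equation. This yields the smooth global extension of $\zeta$ and identifies the fibers of $f$ as the non-intersecting great circles of the resulting fibration, as claimed; with $\zeta$ globally defined and its leaves great circles, Theorem~\ref{fms} is then available for the subsequent step.
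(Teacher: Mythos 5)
Your Steps 1, 2 and 4 track the paper's argument closely: Step 2 is essentially the paper's Claim 1 (an Ascoli--Arzel\`a limit of fiber circles through nearby points of $U$ gives a fiber great circle through every point of $\mathcal{A}_{df}$), and Step 4 corresponds to the paper's Claim 4, where the continuous extension of $\zeta$ near regular fiber points is upgraded to a smooth one via Meier's removable-singularity theorems for the harmonic unit vector field equation. The problem is Step 3, which you yourself flag as ``the main obstacle'': you do not prove it, you only announce a plan (``The plan is to combine this local model with the real-analytic stratification\dots I expect this tangent-cone/dimension analysis\dots to be the genuinely delicate step''). This is precisely the content of the paper's Claims 2, 3 and 5, i.e.\ the heart of the lemma, and leaving it as a sketch is a genuine gap: without it, $\zeta$ is not known to be single-valued or continuous at $\mathcal{A}_{df}$, and the ``non-intersecting'' part of the statement is unproven.

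Moreover, the one concrete idea you offer for Step 3 is not sufficient as stated. At a point $p_0$ with $df_{p_0}=0$, the quadratic model $f(\exp_{p_0}X)=f(p_0)+\tfrac12 B_{p_0}(X,X)+O(|X|^3)$ only tells you that the tangent cone of the fiber lies in the common zero set of the two trace-free forms $B^4_{p_0}$, $B^5_{p_0}$; such a pair of forms on a $3$-dimensional space can easily vanish on several independent directions, so this does not exclude two fiber circles crossing transversally at $p_0$ (nor does it show the subsequential limit $C_0$ is unique). The paper's route is different and genuinely geometric: for a regular point of dimension $1$ it builds a small geodesic disc $D$ transversal to the fibers (Claim 2), parallel-transports its unit normal $\nu$ along the geodesic fibers to obtain a uniform bound $\langle\zeta,\nu\rangle\ge\delta>0$, which forces the \emph{whole} sequence of circles to converge to a single limit (Claim 3); and it excludes crossings at singular points by flowing the extended $\zeta$ from a transversal disc to produce an embedded $2$-surface on which $f$ is constant, contradicting the Cauchy--Kowalewsky rigidity of Lemma \ref{no 3d points} (Claim 5). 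You would need to either reproduce arguments of this kind or make your tangent-cone analysis into an actual proof (e.g.\ a higher-order or stratification argument ruling out branching); as written, the lemma is not established.
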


\begin{proof}
Concerning the extension, since $\zeta$ is smooth on $U$ we only have to consider points
$p\in\mathcal{A}_{df}=\S^3\setminus U$. By Lemma \ref{A_df in A_f} the connected component
of $\mathcal{A}_{df}$ containing $p$ lies in $\mathcal{A}_p$.

{\bf Claim 1:} {\em $\mathcal{A}_{p}$ contains at least one great circle passing through $p$. In particular, $p$ cannot be a regular point of dimension $0$ for $\mathcal{A}_{p}$.} 

{\em Proof of Claim 1.} Since $U$ is dense in $\S^3$, we can pick a sequence of points $(p_n)_{n\in\mathbb{N}} \subset U$ converging to $p$. By Lemma \ref{lem_geofibers}, for each $p_n$ there is a great circle $\gamma_n \subset \mathcal{A}_{p_n}$ passing through it, say parametrized by arclength and with $\gamma_n(0) = p_n$. Up to a subsequence, by the Ascoli-Arzel\'a theorem $\gamma_n$ converges uniformly to a great circle $\gamma_\infty$ passing through $p$, and by continuity $f(q) = f(p)$ for each $q \in \gamma_\infty$. \hfill$\circledast$

{\bf Corollary to Claim 1:} {\em If $p$ is a regular point of dimension $1$ for $\mathcal{A}_p$, then any other sub-sequential limit $\overline{\gamma}_{\infty}: \S^{1} \to \S^{3}$ of $\gamma_n$} must satisfy
\[ \overline{\gamma}_{\infty} = \pm \gamma_{\infty}. \]

{\bf Claim 2:} {\em If $p$ is a regular point of dimension $1$ for $\mathcal{A}_p$, there exists an open geodesic disc $D$ centered at $p$, transversal to the fibers of $f$, whose unit normal $\nu$ can be chosen such that $\nu|_{p} = \gamma'_{\infty}(0)$. }

{\em Proof of Claim 2.} Let
$$\mathcal{H}_{p} = (T_{p}\gamma_{\infty}(\S^{1}))^{\perp} \subset T_{p}\S^{3}.$$
Denote by $B_0^{\mathcal{H}}(r)$ the ball of radius $r$ in $\mathcal{H}_p$ centered at the origin. Suppose that none of the discs
\[ D_{n} \doteq \exp_{p}(B_0^{\mathcal{H}}(\tfrac{1}{n})) \]
are transversal to the fibers of $f$. This implies that there exists a sequence of points $(p_n)_{n\in\mathbb{N}}$ such that
\[ p_{n} \tends p \quad\text{and}\quad \zeta|_{p_{n}} \in T_{p_{n}}D_{n}, \quad\text{for all $n\in\N$}. \]
Following the same reasoning as in the proof of Claim 1, we can construct a geodesic that is perpendicular to $\gamma_{\infty}$ at $p$, and on which $f$ is constant. This is a contradiction since $p$ is a regular point of $\mathcal{A}_{p}$. \hfill$\circledast$

{\bf Claim 3:} {\em If $p$ is a regular point of dimension $1$ for $\mathcal{A}_p$, then the entire sequence $(\gamma_n)_{n\in\mathbb{N}}$ described in Claim 1 converges to the great circle $\gamma_{\infty}$.}

{\em Proof of Claim 3.} Let $D$ be the disc given in Claim 2. We shrink $D$ if necessary, so that
\[ \langle \zeta, \nu \rangle \geq \delta > 0, \quad\text{on $D \setminus \{ p \}$}. \]
The fact that this quantity does not approach zero near $p$ is guaranteed by the proof of Claim 2. We now extend $\nu$ by parallel transport along the geodesic fibers of $f$, so that
\[ \langle \zeta, \nu \rangle \geq \delta, \quad\text{on $\Omega \setminus \mathcal{A}_{p}$}. \]
If $\overline{\gamma}_{\infty} : \S^{1} \to \S^{3}$ is the limit geodesic of another subsequence
$(\gamma_{n_l})_{l\in\mathbb{N}}$, this implies that
\[ \langle \overline{\gamma}'_{\infty}(0), \gamma'_{\infty}(0) \rangle = \lim_{l \to \infty} \langle \zeta, \nu \rangle(p_{n_{l}}) \geq \delta > 0.  \]
It follows from the Corollary to Claim 1 that
$\overline{\gamma}_\infty \equiv \gamma_\infty,$
as claimed.

{\bf Claim 4:} {\em $\zeta$ admits a smooth extension to any $p \in \mathcal{A}_{df}$ which is a regular point of its fiber.}

{\em Proof of Claim 4.} By Lemma \ref{no 3d points} and Claim 1, any such $p$ must be a regular point of dimension $1$ for $\mathcal{A}_p$, hence by Claim 3 the vector field $\zeta$ admits a continuous extension to all the points in a small enough neighbourhood $\Omega$ of $p$ (one for which $\mathcal{A}_p \cap \Omega$ is a single curve). Since the vector field $\zeta$ is harmonic in $U$, according to results of Meier \cite{meier1,meier2} the extension is smooth in $\Omega$.
\hfill$\circledast$

{\bf Claim 5:} {\em For each $p\in\S^3$, the set $\mathcal{A}_p$ is a disjoint union of great circles. In
particular, $\mathcal{A}_p$ does not have singular points.}

{\em Proof of Claim 5.} By Claims 1 and 3, we only  need to consider the case where $p$ is a singular point of $\mathcal{A}_p$.
By {\L}ojasiewicz's Theorem and Lemma \ref{no 3d points}, the point $p$ is in the closure of a union of disjoint curves. Each of them is therefore made up of regular points of $\mathcal{A}_p$ of dimension $1$ and thus, by Claim 1, it is a piece of great circle contained in $\mathcal{A}_p$. To prove the claim it is therefore enough to exclude the possibility that
$\mathcal{A}_p$ contains two great circles $\sigma_j :  \mathcal{S}^1\to\S^3$, $j\in\{1,2\}$, meeting transversely at $p$ at $t=0$, i.e.
$$
\sigma_1(0)=p=\sigma_2(0)\quad\text{and}\quad \langle \sigma'_1(0),\sigma'_2(0)\rangle\neq{\pm 1}.
$$
Note that
$(f \circ \sigma_1) \equiv (f \circ \sigma_2) \equiv f(p).$
Let $q$ be a regular point of $\mathcal{A}_{p}$ in the image of $\sigma_1$. We have shown in Claim 4 that $\zeta$ admits a smooth extension in a neighborhood $\Omega$ of $q$. Let $D$ be a disk centered at $q$ as described in Claim 2, and proceed as in the proof of Claim 3 so that
\[ \langle \zeta, \nu \rangle \geq \delta > 0, \quad\text{on $\Omega$}. \]
In particular,  $\zeta|_{D}$ is smooth, and $\zeta|_{\Omega}$ is obtained by parallelly transporting $\zeta|_{D}$ along the geodesic fibers of the map $f$ in $\Omega$. For $\epsilon$ small enough, we can thus extend $\zeta$ in a tubular neighborhood of $\sigma_1(-\epsilon, \epsilon)$ foliated by the geodesic fibers $\exp_x (t\zeta|_x)$, $x\in D$. In particular, this gives a smooth extension of $\zeta$ to $\sigma_2([0,\tilde\epsilon))$, for $\tilde\epsilon$ small enough. Consider the map
$F : [0,\tilde\epsilon) \times (-\epsilon,\epsilon) \to \S^{3}$
given by
\[ F(s,t) \doteq \exp_{\exp_{p}(s \sigma'_2(0))}(t\zeta). \]
Shrinking $\epsilon$ and $\tilde\epsilon$ further if necessary, the image of $F$ is an embedded $2$-dimensional surface in $\S^{3}$, and the maps $F(s_0, \,\cdot\,) : (-\epsilon,\epsilon) \to \S^{3}$ are geodesic fibers of $f$ (they are integral curves of $\zeta$ away from the image of $\sigma_2$). Since $f$ is also constant on
$F(\,\cdot\,,0) = \sigma_2|_{[0,\tilde\epsilon)},$
it must be constant on a $2$-dimensional surface in $\S^{3}$, which contradicts Lemma \ref{no 3d points}. \hfill$\circledast$

By Claim 5 the extension of $\zeta$ provided in Claim 4 is in fact an extension to the entire $\S^3$.
This concludes the proof of the lemma.
\end{proof}

Let us consider the quotient map $\pi : \S^3 \to \S^2$ generated by $\zeta$, which by Lemma \ref{lem_topological} is a smooth submersion. We claim that there exists a smooth map 
$g : \S^2 \to \S^2$ such that
\begin{equation} \label{mapg}
f = g \circ \pi.
\end{equation}
The map $g$ is constructed as follows. Given $x \in \S^2$, consider the 
great circle
$c_x \doteq \pi^{-1}(x).$ 
Since a fiber of $\pi$ is a connected component of a fiber of $f$, the circle $c_x$ is mapped by $f$ to a point 
$\tilde{x} = f(c_x)$. We then define
\[
g(x) \doteq \tilde{x}.
\]
Since $\pi$ is a submersion, locally around $x$ it admits a smooth section
$\sigma : V \subset \S^2 \to \S^3$, and thus $g \equiv f \circ \sigma$ is smooth. Moreover, $g$ 
has rank $2$ at every point of the open set $\pi(U)$, in particular, $g$ is a global diffeomorphism 
if $f$ is a submersion. From Theorem \ref{fms} it follows that $\pi$ is a Hopf fibration, and it is 
immediate that $g$ must be conformal. 

\section{Proofs of the main theorems}

\subsection{Proof of Theorem \ref{thmA}}
Observe that the function
$u^2-4v^2=(\lambda^2-\mu^2)^2$
is real analytic and vanishes precisely at points where $\lambda=\mu$.
Hence, if $\lambda=\mu$ in an open neighborhood, then $\lambda=\mu$ everywhere on $\S^3$.
Assume now to the contrary that the continuous function
$$w = (\lambda - \mu)^2 \geq 0$$
is not identically zero on $\S^{3}$. Notice that if 
$v^2=\lambda^2\mu^2 = 0$
on an open subset of $\S^{3}$, then it is zero everywhere by analyticity and from Lemma \ref{lapu} we deduce that $f$ is constant. Therefore we may assume that the open set
$$
U \doteq \{ x \in \S^{3} : \lambda(x) >\mu(x)> 0 \}
$$
is non-empty, and observe that $w$ is smooth on $U$. Assume that $w$ attains its maximum at a point $p_0$. If $p_0 \in U$, then by Lemma \ref{lapw} we have that
\begin{equation}\label{A1}
  b_{1k}^{4}(p_0) = b_{2k}^{5}(p_0),\quad\text{for}\,\,\, k\in\{1,2,3\}.
\end{equation}
Moreover, the Laplacian of $w$ on $U$ is given by
\begin{equation}\label{lapw eq}
  \begin{aligned}[b]
    \frac{1}{2}\Delta w &= w(\lambda\mu+2) + |B|^{2} - 2\sum_{k=1}^{3} (b_{1k}^{4}b_{2k}^{5} - b_{1k}^{5}b_{2k}^{4}) - \frac{\mu}{\lambda}\sum_{k=1}^{3} (b_{3k}^{4})^2 - \frac{\lambda}{\mu} \sum_{k=1}^{3} (b_{3k}^{5})^2. \\
    &= w(\lambda\mu+2) + (b_{11}^{4} - b_{12}^{5})^2 + (b_{12}^{4} - b_{22}^{5})^2 + (b_{11}^{5} + b_{12}^{4})^2 + (b_{12}^{5} + b_{22}^{4}) ^2 \\
    &\quad + \Big( 2 - \frac{\mu}{\lambda} \Big)\Big\{(b_{13}^{4})^2 + (b_{23}^{4})^2\Big\} + \Big( 2 - \frac{\lambda}{\mu} \Big)\Big\{(b_{13}^{5})^2 + (b_{23}^{5})^2\Big\} \\
    &\quad - 2b_{13}^{4}b_{23}^{5} + 2b_{13}^{5}b_{23}^{4} + \Big( 1 - \frac{\mu}{\lambda} \Big)(b_{33}^{4})^2 + \Big( 1 - \frac{\lambda}{\mu} \Big)(b_{33}^{5})^2.
  \end{aligned}
\end{equation}
By \eqref{A1}, harmonicity and Young's inequality we obtain that at $p_0\in U$,
\begin{equation}\label{lapww eq}
\frac{1}{2}\Delta w \geq w(\lambda\mu+2) + \Big( 2 - \frac{\mu}{\lambda} - \frac{\lambda}{\mu} \Big) \Big(|B_{13}|^2 + |B_{23}|^2 + |B_{33}|^2 \Big) \geq
\Big( \lambda\mu + 2 - \frac{|B|^2}{\lambda\mu} \Big)w>0,
\end{equation}
which is a contradiction. Since we have assumed that $w$ is not identically zero, we have
\begin{equation}\label{ccc}
\lambda(p_0) > \mu(p_0) = 0.
\end{equation}
(1) Let us examine the case
\begin{equation}\label{a}
|B|^2\le\lambda\mu(\lambda\mu+a).
\end{equation}
In particular this implies that $|B|$ tends to zero as $\mu$ approaches zero. The set
$$V \doteq \{ \mu=0 \} = \{ v^2 = 0 \} \subset \S^{3}$$
is an analytic set of measure zero, and by our pinching it follows that
\[ |\nabla w|^2 \leq C(\lambda - \mu)^2|B|^2 \leq C(\lambda - \mu)^2\lambda\mu(\lambda\mu+a),  \]
whereby $\nabla w$ can be continuously extended by zero on $V$. It follows that $w$ is locally constant on $V$. By Remark \ref{rem_singular}, up to slightly moving $p_0$ we can assume that $p_0$ is a regular point of $V$. In particular, regardless of the dimension of $p_0$ as a regular point, there exists a small ball $D \subset U$ with $p_0 \in \partial D$. Now from \eqref{lapww eq} and \eqref{a}, we get that
$$
\begin{array}{lcl}
\frac{1}{2}\Delta w & \geq & w(\lambda\mu + 2) + 2|B|^2 - (\lambda^2 + \mu^2)(\lambda\mu + \alpha) \\[0.3cm]
& \tends & (2-\alpha)\lambda^2(p_0) > 0 \qquad \text{as } \, p \to p_0.
\end{array}
$$
For this reason, up to shrinking $D$ we can assume that 
$$
\Delta w>0, \quad\text{on}\,\,\, D.
$$
If $\xi$ is the unit normal to the boundary of $D$ at $p_0$, then by Hopf's Boundary Point Lemma, see for example \cite{protter}, we get that
\[ 0 = \langle \nabla w, \xi \rangle = \frac{\partial w}{\partial \xi} > 0, \]
which is a contradiction unless $w$ is a positive constant on $D$. However, in this latter case, from \eqref{lapww eq} we arrive again at contradiction. Hence $w=0$ and the result follows from Theorem \ref{bairdwood}.

(2) This pinching condition implies that $\lambda\mu$ is 
positive everywhere on $\S^{3}$, therefore $\mu$ cannot be zero which contradicts \eqref{ccc}.
 Thus $w=0$ and the result follows from Theorem \ref{bairdwood}.

\subsection{Proof of Theorem \ref{thmC}}
We proceed exactly as in the proof of Theorem \ref{thmA}. Note that, under our assumptions,
$B_{33}=0$ on $U$.
Applying Young's inequality to equation \eqref{lapw eq} we get
\begin{equation}\label{lapw ineq}
\frac{1}{2}\Delta w \geq w(\lambda\mu+2) + \Big( 2 - \frac{\mu}{\lambda} - \frac{\lambda}{\mu} \Big) \Big(|B_{13}|^2 + |B_{23}|^2 \Big) \geq
\Big( \lambda\mu + 2 - \frac{|B|^2}{2\lambda\mu} \Big)w \geq 0,
\end{equation}
this time on the entirety of $U$. Using Hopf's Boundary Point Lemma, as in Theorem \ref{thmA}, we get that $w$ must be constant on $\S^{3}$. Now by \eqref{lapw ineq} we have that
\[ \left( \lambda\mu + 2 - \frac{|B|^2}{2 \lambda\mu} \right)w \equiv 0, \]
which, since we have assumed that $w$ is not identically zero, implies that
\[ |B|^2 \equiv 2 \lambda\mu( \lambda\mu+2). \]
Then by Lemma \ref{lapu} we get that
\[ \frac{1}{2}\Delta u = 2(\lambda^2+\mu^2) + 4\lambda\mu = 2(\lambda + \mu)^2 \geq 0, \]
and by the maximum principle $f$ must be constant, which is again a contradiction. Thus
$w=0$ and the result follows from Theorem \ref{bairdwood}.

\subsection{Proof of Theorem \ref{thmB}}
Denote as usual by $\lambda \ge \mu$ the two largest singular values. Assume that the open set $U = \{\lambda > \mu\}$ is non-empty. By Lemma \ref{7.2}, $\lambda^2$ and $\mu^2$ are smooth on $U$. Moreover, by assumption  
\begin{equation}\label{wein}
W(\lambda,\mu)=0, \qquad \mu W_x(\lambda,\mu) +\lambda W_y(\lambda,\mu) \neq 0 \quad\text{on $U$}.
\end{equation}
\textbf{Claim 1:} {\em The singular value $\mu$ is $C^{1}$ on $U$. }

\textit{Proof of Claim 1.} Differentiability may fail on the closed analytic subset 
\[ 
\Sigma = \{ \lambda > \mu = 0 \} \subset U, 
\]
(the zero set of the analytic function $v^2$ in \eqref{vdef}, see Lemma \ref{lapu}), on which in particular $\lambda$ is smooth; see \cite{nomizu}. Differentiating \eqref{wein} away from $\Sigma$ we get that
\[ 
W_x(\lambda, \mu)\,\nabla\lambda + W_y(\lambda, \mu)\nabla\mu = 0. 
\]
Since $W$ is $C^{1}$ and $\lambda$ and $\mu$ are continuous, the composition $W_y(\lambda,\mu)$ is continuous up to $\Sigma$, and the constraint in \eqref{wein} forces it to be non-zero in the entire $U$. This implies the identity
\[ 
\nabla\mu = -\frac{W_x(\lambda, \mu)}{W_y(\lambda, \mu)}\,\nabla\lambda \qquad \text{on } \, U \backslash \Sigma.
\]
Since the right-hand side is well defined and continuous on $U$, it follows that $\mu$ admits a
$C^1$ extension to $U$. This proves our claim. \hfill$\circledast$

\textbf{Claim 2:} {\em The continuous function $\varrho: \S^3\to\R$ given by
\[ \varrho=\lambda^2-\mu^2\ge0 \]
is identically zero on $\S^{3}$. }

\textit{Proof of Claim 2.} Suppose to the contrary that this is not the case. Then the open set $U$ from Claim 1 is non-empty, and by Lemma \ref{7.2} the function $\rho$ is smooth and attains its maximum at a point $p_0 \in U$. Differentiating $\rho$ and using the relation \eqref{wein} at $p_0$ we get that
\[
\begin{cases}\label{wein1}
   W_x\,\alpha_{k}(\lambda) + W_y\,\alpha_{k}(\mu) = 0, \\
  \lambda\alpha_{k}(\lambda) - \mu\alpha_{k}(\mu) = 0,
\end{cases}
\]
whereby, by the constraint in \eqref{wein}, we obtain that at $p_0$
\begin{equation}\label{zero derivatives}
  \alpha_{k}(\lambda) = \alpha_{k}(\mu) = 0.
\end{equation}
Equivalently, by Lemma \ref{7.1}, at $p_0$ we have that
\[ b_{11}^{4} = b_{12}^{4} = b_{13}^{4} = b_{12}^{5} = b_{22}^{5} = b_{23}^{5} = 0, \]
which in particular also implies that
\[ b_{11}^{5} = -b_{33}^{5} \quad\text{and}\quad b_{22}^{4} = -b_{33}^{4}. \]
Substituting into the equation from Lemma \ref{lapbarw} we immediately obtain that
\begin{equation*}
\Delta\rho(p_0) \geq 4\rho(p_0) > 0,
\end{equation*}
which is a contradiction. \hfill$\circledast$

By Claim 2, $f$ is a weakly conformal harmonic map. Hence, by Theorem \ref{bairdwood}, the map $f$ is the composition of the Hopf fibration with a conformal map of $\S^2$.

\subsection{Proof of Theorem \ref{thmD}}
First assume that the $2$-dilation of $f$ is a constant $c$. Then $f$ satisfies the assumptions in Theorem \ref{thmB} with 
	\[
	W(x,y) = xy - c. 
	\]
	Hence $f$ is weakly conformal, i.e. $\lambda \equiv \mu$ everywhere on $\S^3$. From $W(\lambda,\mu) = 0$ we therefore deduce that $\lambda^2$ is constant. Escobales' result \cite{escobales} guarantees that $f$ is a Hopf fibration.
	
	Next assume that the energy density $u$ is a positive constant $c$. From Lemma \ref{lapu} we get 
	\begin{equation}\label{eq_B_tot}
	|B|^2 = 2(v^2-u). 
	\end{equation}
	Observe that $\mu$ cannot vanish at any point $p$, otherwise $v$ would vanish at $p$ and thus from \eqref{eq_B_tot} we get $u= |B|^2 =0$ at $p$, contradiction. Consequently, $f$ is a submersion. Setting
	$$W(x,y) = x^2 + y^2 - c,$$ even though \eqref{eq_Weingarten} is not satisfied on $\{x>y=0\}$, nevertheless 
\[
\mu W_x(\lambda,\mu) + \lambda W_y(\lambda,\mu) \neq 0 \qquad \text{holds everywhere on } \, \S^3.
\]
This suffices to follow the proof of Theorem \ref{thmB} and conclude that $f$ is weakly conformal. From $\lambda \equiv \mu$ and $u=\lambda^2+\mu^2$ constant we deduce that $\lambda$ and $\mu$ are constant. The conclusion follows by Escobales' result.

\begin{bibdiv}
\begin{biblist}

\bib{savas5}{article}{
 author={Assimos, R.},
 author={Savas-Halilaj, A.},
 author={Smoczyk, K.},
 title={Graphical mean curvature flow with bounded bi-Ricci curvature},
 journal={Calc. Var. Partial Differ. Equ.},
 volume={62},
 pages={26},
 note={Id/No 12},
 date={2023},
}

\bib{baird1}{book}{
   author={Baird, P.},
   author={Wood, J.},
   title={Harmonic morphisms between Riemannian manifolds},
   series={London Mathematical Society Monographs. New Series},
   volume={29},
   publisher={The Clarendon Press, Oxford University Press, Oxford},
   date={2003},
}

\bib{baird2}{article}{
 author={Baird, P.},
 author={Wood, J.},
 title={Bernstein theorems for harmonic morphisms from $R^3$ and $S^3$},
 journal={Math. Ann.},
 volume={280},
 pages={579-603},
 date={1988},
}

\bib{CdCK}{article}{
   author={Chern, S.-S.},
   author={do Carmo, M.},
   author={Kobayashi, S.},
   title={Minimal submanifolds of a sphere with second fundamental form of constant length},
   journal={Functional analysis and related fields. Proceedings of a conference in honor of Professor Marshall 
   Stone, held at the University of Chicago, May 1968, Springer-Verlag},
   volume={135},
   date={1970},
   pages={59-75},
}

\bib{Turck}{article}{
 author={DeTurck, D.},
 author={Gluck, H.},
 author={Storm, P.},
  title={Lipschitz minimality of Hopf fibrations and Hopf vector fields},
 journal={Algebr. Geom. Topol.},
 volume={13},
 pages={1369-1412},
 date={2013},
}

\bib{ding1}{article}{
 author={Ding, Q.},
 author={Jost, J.},
 author={Xin, Y.-L.},
 title={Minimal graphs of arbitrary codimension in Euclidean space with bounded 2-dilation},
 journal={Math. Ann.},
 volume={390},
 pages={3355-3418},
 date={2024},
}

\bib{ding2}{article}{
 author={Ding, Q.},
 author={Jost, J.},
 author={Xin, Y.-L.},
 title={Existence and non-existence of minimal graphs},
 journal={J. Math. Pures Appl.},
 volume={179},
 pages={391-424},
 date={2023},
}

\bib{eells1}{article}{
 author={Eells, J.},
 author={Ferreira, M.J.},
 title={On representing homotopy classes by harmonic maps},
 journal={Bull. Lond. Math. Soc.},
 volume={23},
 pages={160-162},
 date={1991},
}

\bib{eells2}{article}{
 author={Eells, J.},
 author={Ratto, A.},
 title={Harmonic maps between spheres and ellipsoids},
 journal={Int. J. Math.},
 volume={1},
 pages={1-27},
 date={1990},
}

\bib{eells}{article}{
   author={Eells, J.},
   author={Sampson, J.},
   title={Harmonic mappings of Riemannian manifolds},
   journal={Amer. J. Math.},
   volume={86},
   date={1964},
   pages={109-160},
}

\bib{escobales}{article}{
author={Escobales, R.},
title={Riemannian submersions with totally geodesic fibers},
journal={J. Differential Geom.},
volume={10},
date={1975},
pages={253-276},
}

\bib{savas2}{article}{
 author={Fourtzis, I.},
 author={Markellos, M.},
 author={Savas-Halilaj, A.},
 title={Gauss maps of harmonic and minimal great circle fibrations},
 journal={Ann. Global Anal. Geom.},
 volume={63},
 pages={25 p.},
 date={2023},
}

\bib{gluck3}{article}{
   author={Gluck, H.},
   author={Warner, F.},
   title={Great circle fibrations of the three-sphere},
   journal={Duke Math. J.},
   volume={50},
   date={1983},
   pages={107-132},
}

\bib{hatcher}{book}{
   author={Hatcher, A.},
   title={Algebraic topology},
   series={Cambridge: Cambridge University Press},
   volume={73},
   publisher={Boston, MA etc.: Birkh{\"a}user Verlag},
   date={2002},
}

\bib{heller}{article}{
 author={Heller, S.},
 book={
 title={Harmonic maps and differential geometry. A harmonic map fest in honour of John C. Wood's 60th birthday, Cagliari, Italy, September 7-10, 2009},
 publisher={Providence, RI: American Mathematical Society (AMS)},
 },
 title={Conformal fibrations of {{\(\mathbb{S}^3\)}} by circles},
 pages={195-202},
 date={2011},
}

\bib{hopf}{article}{
   author={Hopf, H.},
   title={ \"{U}ber die Abbildungen der dreidimensionalen Sph\"{a}re auf die Kugelfl\"{a}che},
   journal={Math. Ann.},
   volume={104},
   date={1931},
   pages={637-665},
}

\bib{kr}{book}{
   author={Krantz, S.G.},
   author={Parks, H.R.},
   title={A primer of real analytic functions},
   series={Birkh\"auser Advanced Texts: Basler Lehrb\"ucher},
   edition={2},
   publisher={Birkh\"auser Boston, Inc., Boston, MA},
   date={2002},
}

\bib{lawson}{article}{
   author={Lawson, H.-B.},
   title={Local rigidity theorems for minimal hypersurfaces},
   journal={Ann. of Math.}, 
   volume={89},
   date={1969},
   pages={187-197},
}

\bib{lee}{book}{
   author={Lee, T.-K.},
   title={Uniqueness problems in mean curvature flow},
   note={Thesis (Ph.D.)-Massachusetts Institute of Technology},
   date={2025},
}

\bib{lee1}{article}{
 author={Lee, M.-C.},
 author={Tam, L.-F.},
 author={Wan, J.},
 title={Rigidity of area non-increasing maps},
 journal={Trans. Am. Math. Soc.},
 volume={378},
 pages={2797-2827},
 date={2025},
}

\bib{lee2}{misc}{
 author={Lee, M.-C.},
 author={Wan, J.},
 review={arXiv:2306.12258},
 title={Rigidity of contracting map using harmonic map heat flow},
 date={2023},
}

\bib{meier2}{article}{
 author={Meier, M.},
 title={Removable singularities of harmonic maps and an application to minimal submanifolds},
 journal={Indiana Univ. Math.},
 volume={35},
 pages={705-726},
 date={1986},
}

\bib{meier1}{article}{
 author={Meier, M.},
 title={Removable singularities for weak solutions of quasilinear elliptic systems},
 journal={J. Reine Angew. Math.},
 volume={344},
 pages={87-101},
 date={1983},
}

\bib{nomizu}{article}{
 author={Nomizu, K.},
 title={Characteristic roots and vectors of a differentiable family of symmetric matrices},
 journal={Linear and Multilinear Algebra},
 volume={1},
 date={1973},
}

\bib{palais}{book}{
	author={Palais, R.S.},
	title={A global formulation of the Lie theory of transformation groups},
	series={Memoirs of the American Mathematical Society},
	volume={22},
	date={1957},
	}

\bib{pinkall}{article}{
 author={Pinkall, U.},
 title={Hopf tori in {{\(S^ 3\)}}},
 journal={Invent. Math.},
 volume={81},
 pages={379-386},
 date={1985},
}

\bib{protter}{book}{
   author={Protter, M.},
   author={Weinberger, H.},
   title={Maximum principles in differential equations},
   publisher={Prentice-Hall Inc.},
   place={Englewood Cliffs, N.J.},
   date={1967},
}

\bib{ratto2}{article}{
 author={Ratto, A},
 title={On harmonic maps between {{\(S^ 3\)}} and {{\(S^ 2\)}} of prescribed Hopf invariant},
 journal={Math. Proc. Camb. Philos. Soc.},
 volume={104},
 number={2},
 pages={273-276},
 date={1988},
}

\bib{savas4}{article}{
 author={Savas-Halilaj, A.},
 author={Smoczyk, K.},
 title={Bernstein theorems for length and area decreasing minimal maps},
 journal={Calc. Var. Partial Differ. Equ.},
 volume={50},
 pages={549-577},
 date={2014},
}

\bib{schoen2}{article}{
   author={Schoen, R.},
   title={The role of harmonic mappings in rigidity and deformation
   problems},
   conference={
      title={Complex geometry},
      address={Osaka},
      date={1990},
   },
   book={
      series={Lecture Notes in Pure and Appl. Math.},
      volume={143},
      publisher={Dekker},
      place={New York},
   },
   date={1993},
   pages={179-200},
}

\bib{schoen}{book}{
 author={Schoen, R.},
 author={Yau, S.-T.},

 book={
 title={Lectures on harmonic maps},
 publisher={Cambridge, MA: International Press},
 },
 title={Lectures on harmonic maps},
 series={Conference Proceedings and Lecture Notes in Geometry and Topology},
 volume={2},
 date={1997},
 publisher={International Press, Cambridge, MA},
}

\bib{simons}{article}{
   author={Simons, J.},
   title={Minimal varieties in Riemannian manifolds},
   journal={Ann. of Math.},
   volume={88},
   date={1968},
   pages={62-105},
}

\bib{tsai}{article}{
 author={Tsai, C.-J.},
 author={Tsui, M.-P.},
 author={Wang, M.-T.},
  journal={ J. Reine Angew. Math. (Crelle’s Journal)},
  volume={to appear},
 title={A new monotone quantity in mean curvature flow implying sharp homotopic criteria},
 date={2025},
 pages={1-20}
}

\bib{wiegmink}{article}{
   author={Wiegmink, G.},
   title={Total bending of vector fields on Riemannian manifolds},
   journal={Math. Ann.},
   volume={303},
   date={1995},
   pages={325-344},
  }

\bib{wood}{article}{
   author={Wood, C.M.},
   title={On the energy of a unit vector field},
   journal={Geom. Dedicata},
   volume={64},
   date={1997},
   pages={319-330},
}

\bib{wangg}{article}{
   author={Wang, G.},
   title={$\S^1$-invariant harmonic maps from $\S^3$ to $\S^2$},
   journal={Bull. London Math. Soc.},
   volume={32},
   date={2000},
   pages={729-735},
}
		
\end{biblist}
\end{bibdiv}

\end{document}